\newtheorem{thm}{Theorem}[section]
\newtheorem*{theorem*}{Theorem}
\newtheorem{prop}[thm]{Proposition}
\newtheorem{lm}[thm]{Lemma}
\newtheorem{coro}[thm]{Corollary}
\newcommand{\nc}{\newcommand}
\nc{\delete}[1]{{}}
    \nc{\mlabel}[1]{\label{#1}}  
    \nc{\mcite}[1]{\cite{#1}}  
    \nc{\mref}[1]{\ref{#1}}  
    \nc{\meqref}[1]{\eqref{#1}}  
    \nc{\mbibitem}[1]{\bibitem{#1}} 
    \nc{\mlabel}[1]{\label{#1} {{\small\tt{{\ }\ (#1)}}}}                
    \nc{\mcite}[1]{\cite{#1}{{\small\tt{{\ }(#1)}}}}  
    \nc{\mref}[1]{\ref{#1}{{\small\tt{{\ }(#1)}}}}  
    \nc{\meqref}[1]{\eqref{#1}{{\small\tt{{\ }(#1)}}}}  
    \nc{\mbibitem}[1]{\bibitem[\bf #1]{#1}} 
\newcommand*\bigcdot{\mathpalette\bigcdot@{.5}}
\newcommand*\bigcdot@[2]{\mathbin{\vcenter{\hbox{\scalebox{#2}{$\m@th#1\bullet$}}}}}
\providecommand{\customgenericname}{}
\newcommand{\newcustomtheorem}[2]{%
    \newenvironment{#1}[1]
    {%
        \renewcommand\customgenericname{#2}%
        \renewcommand\theinnercustomgeneric{##1}%
        \innercustomgeneric
    }
    {\endinnercustomgeneric}
}
\theoremstyle{definition}
\newtheorem{example}[thm]{Example}
\newtheorem{df}[thm]{Definition}
\newtheorem{remark}[thm]{Remark}
\nc{\name}[1]{{\bf #1}}
\nc{\mhat}[1]{\,\widehat{#1}}
\newcommand{\N}{\mathbb{N}}
\newcommand{\Z}{\mathbb{Z}}
\newcommand{\Q}{\mathbb{Q}}
\newcommand{\R}{\mathbb{R}}
\nc{\Res}{\mathrm{Res}}
\def \ra {\rightarrow}
\def \C {\mathbb{C}}
\def\la{\lambda}
\def \al{\alpha}
\def \si{\sigma}
\def \om{\omega}
\def \ga {\gamma}
\def \b {\beta}
\def \op {\oplus}
\def \ssq{\subseteq}
\def \vac {\mathbf{1}}
\def \g {\mathfrak{g}}
\def \h {\mathfrak{h}}
\def \Hom {\mathrm{Hom}}
\def \End {\mathrm{End}}
\def\Id{\mathrm{Id}}
\def \wt {\mathrm{wt}}
\def \O {\mathcal{O}}
\def\gl{\mathfrak{gl}}
\def\T{\mathcal{T}}
\def\R{\mathcal{R}}
\def\o{\otimes}
\def\Y{\mathcal{Y}}
\def\SD{\mathrm{SD}}
\def\<{\langle}
\def\>{\rangle}
\newcommand\numberthis{\addtocounter{equation}{1}\tag{\theequation}}
\begin{document}

\title[Yang-Baxter equation for VOAs]{Classical Yang-Baxter equation for vertex operator algebras and its operator forms}

\author{Chengming Bai}
\address{Chern Institute of Mathematics \& LPMC, Nankai University, Tianjin 300071, China}
\email{baicm@nankai.edu.cn}

\author{Li Guo}
\address{Department of Mathematics and Computer Science, Rutgers University, Newark, NJ 07102, USA}
\email{liguo@rutgers.edu}

\author{Jianqi Liu}
\address{Department of Mathematics, University of California, Santa Cruz, CA 95064, USA}
\email{jliu230@ucsc.edu}

\date{\today}

\begin{abstract}
In this paper we introduce an analog of the (classical) Yang-Baxter equation (CYBE) for vertex operator algebras (VOAs) in its tensor form, called the vertex operator Yang-Baxter equation (VOYBE). When specialized to level one of a vertex operator algebra, the VOYBE reduces to the CYBE for Lie algebras. To give an operator form of the VOYBE, we also introduce the notion of relative Rota-Baxter operators (RBOs) as the VOA analog of relative RBOs (classically called $\mathcal{O}$-operators) for Lie algebras. It is shown that skewsymmetric solutions $r$ to the VOYBE in a VOA $U$ are characterized by the condition that their corresponding linear maps $T_r:U'\to U$ from the graded dual $U'$ of $U$ are relative RBOs. On the other hand, strong relative RBOs on a VOA $V$ associated to an ordinary $V$-module $W$ are characterized by the condition that their antisymmetrizers are solutions to the $0$-VOYBE in the semidirect product VOA $V\rtimes W'$. Specializing to the first level of a VOA, these relations between the solutions of the VOYBE and the relative RBOs for VOAs recover the classical relations between the solutions of the CYBE and the relative RBOs for Lie algebras.
\end{abstract}

\subjclass[2010]{
17B69,	
17B38, 
17B10, 
81R10  
17B68, 
17B65,  
81R12  
}

\keywords{Vertex algebra, vertex operator algebra, Lie algebra, classical Yang-Baxter equation, vertex operator Yang-Baxter equation, Rota-Baxter operator, $\mathcal{O}$-operator, conformal algebra}

\maketitle

\vspace{-1.2cm}

\tableofcontents

\vspace{-.5cm}

\allowdisplaybreaks

\section{Introduction}

This paper gives a systematic study of the analog of the classical Yang-Baxter equation for vertex operator algebras, in both its tensor form and operator form. The relation to the classical Yang-Baxter equation on Lie algebras is established.

\subsection{Classical Yang-Baxter equations and Rota-Baxter operators}
Classical Yang-Baxter equations for various algebraic structures have played a major role in mathematics and physics.
\subsubsection{Classical Yang-Baxter equation}
The classical Yang-Baxter equation (CYBE) is the semi-classical limit of the quantum Yang-Baxter equation, named after the physicists C.N. Yang and R. Baxter~\mcite{BaR,Yang} in statistics mechanics. A more general form was introduced by L.D. Faddeev, E.K. Sklyanin, L.A. Takhtazhyan~\mcite{Fa} as a basis for their quantum inverse scattering method. The independent importance of the CYBE lies in its crucial role in broad areas including symplectic geometry, integrable systems, quantum groups and quantum field theory (see~\mcite{BGN, Dr,Ji,  KS} and the references therein). Its solution has inspired the remarkable works of Belavin, Drinfeld and others, and the important notions of Lie bialgebras and Manin triples~\mcite{BD}.

Let $\g$ be a Lie algebra and
$r=\sum_{i}x_{i}\otimes y_{i}$ for $x_i, y_i\in \g$.
The tensor form   of the CYBE  is
\begin{equation}\mlabel{eq:1}
	[[r,r]]:=[r_{12}, r_{13}]+[r_{12}, r_{23}]+[r_{13}, r_{23}]=0,
\end{equation}
where
\begin{equation}
	r_{12}:=\sum_{i}x_{i}\otimes y_{i}\otimes 1,\ \
	r_{13}:=\sum_{i}x_{i}\otimes 1\otimes y_{i},\ \
	r_{23}:=\sum_{i}1\otimes x_{i}\otimes y_{i}.
\end{equation}
Here $\g$ is embedded in its universal
enveloping algebra with unit $1$.
A solution of the CYBE is  called a classical
$r$-matrix (or simply an $r$-matrix).

\subsubsection{Relative Rota-Baxter operators as the operator form}
Since the early stage of the study, it has been found necessary to expand the tensor form of the CYBE in~\meqref{eq:1} to operator
equations. First Semenov-Tian-Shansky~\mcite{S} showed that skewsymmetric $r$-matrices in certain Lie algebras can be characterized as
linear operators satisfying a (Rota-Baxter) operator identity. The modified Yang-Baxter equation was also defined by another operator identity. Then the notion of an $\mathcal O$-operator (later called a relative Rota-Baxter operator) of a Lie algebra was introduced by
Kupershmidt~\mcite{K1} as a natural generalization of the CYBE,
and a skewsymmetric $r$-matrix is interpreted as a Rota-Baxter operator associated to the coadjoint representation. Conversely, it was shown in~\mcite{B2}
that any relative Rota-Baxter operator gives an $r$-matrix  in a semidirect product Lie algebra.
Furthermore,  pre-Lie algebras, in addition to their independent
interests in deformation theory, geometry, combinatorics, quantum field
theory and operads~\mcite{Bu,Man}, naturally give rise to relative Rota-Baxter operators and hence provide $r$-matrices~\mcite{B2,TBGS}.
Interestingly, the Rota-Baxter identity on associative algebras has appeared in the probability study of G.~Baxter in 1960~\mcite{BaG}, and recently appeared in the algebraic approach of Connes and Kreimer to renormalization of quantum field theory~\mcite{CK} as a fundamental structure. See~\mcite{G}. The associative analog of the modified Yang-Baxter equation appeared even earlier in 1951~\mcite{Tr}.

\subsubsection{Classical Yang-Baxter equations for other structures}
Inspired by the importance of the CYBE in the theory and applications of Lie algebras, analogs of the CYBE have been studied for various other algebraic structures, including associative algebras, Leibniz algebras, Poisson algebras, pre-Lie algebras, Novikov algebras, Rota-Baxter Lie and associative algebras, Hom-Lie algebras, endo Lie algebras, $3$-Lie algebras and Lie 2-algebras~\mcite{A1,BGS19,BGS20,BSZ13,HB20,HBG,NB,TS22}. 

\subsection{Vertex operator algebras}
Vertex algebras were introduced by Borcherds~\mcite{Bo} in 1986 to give a rigorous mathematical description of the conformal field theory, and to find an algebraic structure to generalize the basic representation of affine Lie algebras associated with A-D-E type root lattice \cite{FK} to the Leech lattice.
A vertex operator algebra (VOA) is a graded vertex algebra with a distinguished element (called the Virasoro element) that encodes the action of conformal transformations on the algebra.
Introduced by Frenkel, Lepowsky, and Meurman~\mcite{FLM} in 1988 to address the problems of the ``monstrous moonshine'' phenomena for the monster finite simple group and $j$-invariant, VOAs have been the subject of intensive research, and have found applications in diverse areas of mathematics and physics, including representation theory, string theory, combinatorics, and geometry~\mcite{Bo,FLM,DL,Hu97,K,LL}.

Vertex (operator) algebras are intimately related to Lie algebras. The Jacobi identity of Lie algebras motivates the Jacobi identity in a vertex algebra. It was known~\mcite{Bo} that for a vertex algebra $V$, all components (or modes) of vertex operators give rise to a Lie algebra $\mathfrak{g}(V)$. On the other hand, the representations of affine Lie algebras are used to construct vertex operator algebras~\cite{FLM,FZ}. In turn, these VOAs are determined by their first-level Lie algebras $\g=V_1$.

In addition, Frenkel and Zhu~\mcite{FZ,Z} introduced the associative algebra $A(V)$ associated to a VOA $V$ such
that there is a one-to-one correspondence between irreducible admissible $V$-modules and
irreducible $A(V)$-modules. This construction was extended in~\mcite{DLM} to a sequence of associative algebras constructed from a VOA. Recently, Huang constructed an associative algebra $A^\infty(V)$ which contains these previous algebras as subalgebras~\mcite{Hu1}. VOAs are also closely related to (differential) commutative algebras and perm algebras~\mcite{LTW}. 

\vspace{-.1cm}

\subsection{VOAs and CYBEs}
\vspace{-.1cm}

\subsubsection{Challenges and motivations}
As noted above, variations of the CYBE have been defined for many algebraic structures and have played major roles in their studies, such as their bialgebra theories, deformations and cohomologies. The VOA, defined by packaging together infinitely many binary operations, is much more complicated  in its definition than the traditional algebraic structures. In addition, unlike some other algebraic structures with infinitely many operators such as $L_\infty$-algebras, the VOA does not fit into the usual algebraic structures as defined by varieties. Indeed, as already noted by Borcherds~\mcite{Bo}, free vertex algebras do not exists in general since the vertex algebras do not form a variety of algebras in the sense of universal algebra~\mcite{Ro}.

Thus it is challenging to define an analog of the CYBE for VOAs. Indeed, so far such an analog is established only in a very special case for the operator form, namely for Rota-Baxter operators~\mcite{X1,X2}. However, there was no tensor form analog of the CYBE for VOAs.

In our search of a suitable notion of the CYBE for VOAs which we call the vertex operator Yang-Baxter equation (VOYBE), we have in mind not only a more general operator form of the equation, but also a tensor form that is comparable with the tensor form of the CYBE.
Furthermore, due to the close relationship of VOAs with several common algebraic structures, including Lie algebras, associative algebras, differential algebras and perm algebras, it should be beneficial to compare the VOYBE with the analogs of CYBE that have been defined for all these algebraic structures.
In addition, such VOYBE should also contribute to establishing a bialgebra theory of VOAs.

On the other hand, the original (quantum) version of the Yang-Baxter equation in \mcite{BaR,Yang} was used to introduce the notion of deformed chiral algebras or quantum VOAs, see \mcite{FR,EK}, and the quantum Yang-Baxter equation is part of the axioms of these algebras.

In summary, the analog of the CYBE for VOAs is in the center of several interesting questions on VOAs and their connections. This paper is a first step in exploring these connections.

\subsubsection{Our approach}
In this paper, we introduce both a tensor form and an operator form of the classical Yang-Baxter equations for VOAs, so that they are compatible in a way that generalizes the compatibility of the tensor form and operator form of the CYBE, when a VOA $V$ is specialized to the Lie algebra on the first level $V_1$. In particular, for each index $m$ in the vertex operator $Y(a,z)=\sum_{m\in \Z} a_m z^{-m-1}$, we give the $m$-vertex operator Yang-Baxter equation ($m$-VOYBE) 
\vspace{-.1cm}
\begin{equation}\mlabel{3}
	r_{12}\cdot_m r_{13}- r_{23}\cdot'_m r_{12}+r_{13}\cdot'^{\mathrm{op}}_m r_{23}=0.
\end{equation}
In this equation, $r$ is in the completion $V\mhat{\o}V$ of the tensor product $V\o V$ with respect to the natural filtration of $V\o V$, and the three products $\cdot_m, \cdot'_m$, and $\cdot'^{\mathrm{op}}_m$ are constructed from the vertex operator $Y$.

For the operator form of the VOYBE, we also introduce the notion of relative Rota-Baxter operators (RBOs)  for the VOAs, as a generalization of both the relative RBOs for the Lie algebras and the operator form generalization of the $R$-matrices for VOAs given in \mcite{X1}. A relative Rota-Baxter operator of the VOA $(V,Y_V,\vac,\om)$ associated with an ordinary $V$-module $(W,Y_W)$ is a linear map $T:W\ra V$, satisfying the axiom
\vspace{-.1cm}
\begin{equation}\mlabel{4}
Y_V(T(u),z)T(v)=T(Y_W(T(u),z)v)+T(Y_{WV}^W(u,z)T(v)), \quad u, v\in W.
\end{equation}

We will show that the correspondence between skewsymmetric solutions of the tensor form of CYBE and the relative RBOs of the Lie algebra $\g$ associated with the coadjoint representation given in \mcite{K1} can be naturally generalized to the case of VOA, and the correspondence in the VOA case recovers the correspondence of the Lie algebra case in \mcite{K1}.

On the other hand, the first author proved in \mcite{B2} that  relative RBOs of a Lie algebra $\g$ associated with a module $V$ give rise to skewsymmetric solutions of the tensor form of the CYBE in the semidirect product Lie algebra $\g\rtimes V^\ast$. With our definition of the VOYBE and relative Rota-Baxter operators for VOAs, a similar result holds, except the relative Rota-Baxter operator needs to satisfy some additional compatibility properties with certain intertwining operators of VOAs.

\vspace{-.1cm}
\subsection{Outline of the paper} The paper is organized as follows.
In Section~\mref{s:voybe}, we apply representations of VOAs and complete tensor products of graded vector spaces to define the VOYBE (Definition~\mref{df:VOYBE}). For the operator form of the VOYBE, we further introduce the notion of relative Rota-Baxter operators for a VOA associated to a representation of the VOA (Definition~\mref{df3.1}). As a first justification of the operator form, we show that skewsymmetric solutions of the VOYBE corresponds to relative Rota-Baxter operators of the VOA associated to its adjoint representation (Theorem~\mref{thm:main1}).

The second justification for relative RBOs as the operator form of  the VOYBE is given in Section~\mref{s:ofvoybe}, where we prove that any relative Rota-Baxter operator of a VOA associated to a representation $T:W\ra V$ gives rise to a solution of the VOYBE in the semidirect product VOA $V\rtimes W'$, where $W'$ is the contragredient module of $W$ (Theorem~\mref{thm:main2}).

In Section~\mref{s:cybe}, we establish the connection between the VOYBE and the CYBE of Lie algebras, by considering the reduction process of a VOA to its Lie algebra at the first degree. We show that in this reduction process, a solution to the VOYBE in this VOA is reduced to a solution of the CYBE in the reduced Lie algebra, a relative RBO for this VOA is reduced to a relative RBO (classically called an $\mathcal{O}$-operator) for the corresponding Lie algebra. Furthermore, the relations between solutions of the VOYBE and relative RBOs for the VOAs established in Theorems~\mref{thm:main1} and \mref{thm:main2} reduce to the classical relations between the solutions of the CYBE on Lie algebras and the relative RBOs for Lie algebras (see diagrams \meqref{eq:diag1} and \meqref{eq:diag2}).

\smallskip
\noindent
{\bf Conventions.} Throughout this paper, we take the field $\C$ of complex numbers to be the base field of vector spaces, linear maps and tensor products. Let $\N$ denote the set of nonnegative integers.

\section{Vertex operator Yang-Baxter equation}
\mlabel{s:voybe}

In this section, we first recall the needed background on VOAs and completed tensor products. We then give the notion of the VOYBE, as the VOA analog of the (tensor form of the) CYBE. This is followed by the notion of relative Rota-Baxter operators which serves as the operator form of the VOYBE.

\subsection{Vertex operator algebras and their representations}
\mlabel{ss:voa}
We recall basic notions of vertex algebras and VOAs. For details, see for example~\mcite{Bo,DL,FHL,FLM,Hu97,K,LL}.

\begin{df}\mlabel{df:va}
	A \name{vertex algebra (VA)} is a triple $(V, Y, {\bf 1} )$ consisting of a vector space $V$,
	a linear map
	\begin{align*}
	Y:& V \to (\mbox{End}\,V)[[z,z^{-1}]] ,\quad 
	 a\mapsto Y(a,z)=\sum_{n\in{\Z}}a_nz^{-n-1}, \ \ \ \  \text{ where } a_n\in
	\mbox{End}\,V,\nonumber
	\end{align*}
which is called the \name{vertex operator} or the \name{state-field correspondence},	and a distinguished element ${\bf 1} \in V$ called the \name{vacuum vector}, satisfying the following conditions.
	\begin{enumerate}
		\item  (Truncation property) For $a,b\in V$, $ a_nb=0$ when $n\gg 0$.
		\item  (Vacuum property) $Y(\vac,z)=\Id_{V}$.
		\item  (Creation property) For $a\in V$, $Y(a,z){\bf 1}\in V[[z]]$ and $ \lim\limits_{z\to 0}Y(a,z){\bf 1}=a$.
		\item (The Jacobi identity) For $a,b\in V$,
		\begin{align*}& \displaystyle{z^{-1}_0\delta\left(\frac{z_1-z_2}{z_0}\right)
			 Y(a,z_1)Y(b,z_2)-z^{-1}_0\delta\left(\frac{-z_2+z_1}{z_0}\right)
			Y(b,z_2)Y(a,z_1)}
		\displaystyle{=z_2^{-1}\delta
			 \left(\frac{z_1-z_0}{z_2}\right)
			Y(Y(a,z_0)b,z_2)}.
		\end{align*}
Here $\delta(x):=\sum_{n\in \Z} x^n$ is the  formal delta function. 
	\end{enumerate}
\end{df}
With a distinguished ``conformal element'', we have the following enhancement of VAs~\mcite{FLM}.  
\begin{df}\mlabel{df2.2}
	A \name{vertex operator algebra (VOA)} is a quadruple $(V, Y, \vac, \omega)$, where
\begin{itemize}
\item $(V,Y(\cdot, z), \vac)$ is a $\Z$-graded vertex algebra: $V=\bigoplus_{n \in \Z}V_n,$
	such that ${\bf 1} \in V_0$, $\mathrm{dim}V_n < \infty$ for each $n\in \Z$, and $V_n=0$ for $n$ sufficiently small;
\item ${\omega} \in V_2$ is a distinguished element, called the \name{Virasoro element}. Write $Y(\omega,z)=\sum_{n\in \Z} a_n z^{-n-1}=\sum_{n\in\Z}L(n)z^{-n-2}$, that is, $L(n):=\omega_{n+1}$ for $n\in \Z$.
\end{itemize}
Together, they fulfill the following additional conditions.
	
	\begin{enumerate}
		\item[(v)] (The Virasoro relation)
		 $$[L(m),L(n)]=(m-n)L(m+n)+\frac{1}{12}(m^3-m)\delta_{m+n,0}c,$$
		where $c\in \C$ is called the \name{central charge} (or \name{rank}) of $V$.
		\item[(vi)] ($L(-1)$-derivation property) $L(-1)a=a_{-2}\vac$, and  $$\frac{d}{dz}Y(v,z)=Y(L(-1)v,z)=[L(-1),Y(a,z)].$$
		\item[(vii)] ($L(0)$-eigenspace property) $L(0)v=nv$, for all $v\in V_{n}$ and $n\in \Z$.
	\end{enumerate}

A VOA $V$ is said to be of \name{conformal field type (CFT-type)}, if $V=V_0\op V_+$, where $V_0=\C\vac$ and $V_+=\bigoplus_{n=1}^\infty V_n$ as a graded vector space.
\end{df}
For a homogeneous element $a\in V_n$, we denote $\wt(a):=n$.

\begin{df}\mlabel{df2.13}
	Let $(V,Y,\vac,\om)$ be a vertex operator algebra, a \name{weak $V$-module} $(W,Y_{W})$ is a vector space $W$ equipped with a linear map
$$	Y_{W}:V\ra (\End W)[[z,z^{-1}]],\quad
	a\mapsto Y_{W}(a,z)=\sum_{n\in \Z} a_{n} z^{-n-1},\quad  \text{where}\ a_n\in \End(W),
$$	
satisfying the following axioms. 
	\begin{enumerate}
		\item (Truncation property) For any $a\in V$ and $v\in W$, $a_{n}v=0$ for $n\gg 0$.
		\item (Vacuum property) $Y_{W}(\vac,z)=\Id_{W}$.
		\item (The Jacobi identity) For any $a,b\in V$, and $u\in W$
		\begin{align*}
		 &z_{0}^{-1}\delta\left(\frac{z_{1}-z_{2}}{z_{0}}\right) Y_{W}(a,z_{1})Y_{W}(b,z_{2})u-z_{0}^{-1}\delta\left(\frac{-z_{2}+z_{1}}{z_{0}}\right)Y_{W}(b,z_{2})Y_{W}(a,z_{1})u\\
		&=z_{2}^{-1}\delta \left(\frac{z_{1}-z_{0}}{z_{2}}\right) Y_{W}(Y(a,z_{0})b,z_{2})u.
		\end{align*}
	\end{enumerate}
	
	A weak $V$-module $W$ is called \name{admissible (or $\N$-gradable)} if $W=\bigoplus_{n\in \N}W(n)$ with $\dim W(n)<\infty$ for each $n\in \N$, and $a_{m}W(n)\subset W(\wt(a)-m-1+n)$ for all homogeneous $a\in V$, $m\in \Z$, and $n\in \N$. For each $n\in \N$, we write $\deg u=n$ for any $u\in W(n)$.
	
	An admissible $V$-module $W=\bigoplus_{n=0}^\infty W(n)$ is called an \name{ordinary $V$-module} if the element $L(0)=\Res_{z} z Y_{W}(\om ,z)$ acts semi-simply on $W$ (that is, $W$ is a semi-simple module under the action of $L(0)$), and there exists a $\la\in \Q$, called the \name{conformal weight} of $W$, such that $W(n)=W_{\la+n}$ is an eigenspace of $L(0)$ of eigenvalue $\la+n\in \Q$ for each $n\in \N$.
	
For each $n\in \N$, we write $\wt (u)=\la+n$ for any $u\in W_{\la+n}$.
\end{df}

Note that $V$ is an ordinary $V$-module with $Y_V$, called the \name{adjoint representation} of $V$.

Let $(W,Y_{W})$ be a weak module over a VOA $V$. Write $Y_{W}(\om ,z)=\sum_{n\in \Z} L(n)z^{-n-2}$. It is proved in \mcite{DLM} that $Y_{W}$ also satisfies the $L(-1)$-derivative property  and the $L(-1)$-bracket derivative property: 
$$ Y_{W}(L(-1)a,z)=\frac{d}{dz}Y_{W}(a,z)=[L(-1),Y_{W}(a,z)], \quad a\in V.$$

Let $(V,Y,\vac,\om)$ be a vertex operator algebra, and $(W,Y_{W})$ be an ordinary $V$-module, with conformal weight $\la\in \Q$. We can construct a semidirect product vertex algebra $V\rtimes W$ (cf. \mcite{L1}, see also the last section in \mcite{FHL}). As a vector space, $V\rtimes W=V\op W$. The vertex operator $Y_{V\rtimes W}$ is given by
\begin{equation}\mlabel{2.11}
Y_{V\rtimes W}(a+u,z)(b+v):=(Y(a,z)b)+(Y_{W}(a,z)v+Y_{WV}^{W}(u,z)b), \quad a, b\in V, u,v\in W.
\end{equation}
Here $Y_{WV}^{W}$ is defined by the skewsymmetry formula
\begin{equation}\mlabel{2.12}
Y_{WV}^{W}(u,z)b=e^{zL(-1) }Y_{W}(b,-z)u, \quad b\in V, v\in W.
\end{equation}
If $W$ only has integral weights, then $(V\rtimes W,Y_{V\rtimes W}, \vac,\om)$ is a vertex operator algebra. In general, $V\rtimes W$ is only a vertex algebra, and it satisfies all the axioms of a VOA except that $L(0)$ only has integral eigenvalues (see Proposition 2.10 in \mcite{L1}).

We also recall the definition of contragredient modules of a VOA $V$. Let $W$ be an admissible $V$-module, and let $W'$ be the graded dual of $W$:
$W'=\bigoplus_{n=0}^{\infty} W(n)^{\ast}.$
Then $(W',Y_{W'})$ is an admissible $V$-module, where
\begin{equation}\mlabel{3.9}
\<Y_{W'}(a,z)f,u\>=\<f,Y_{W}(e^{zL(1)}(-z^{-2})^{L(0)}a,z^{-1})u\>,\quad a\in V, f\in W', u\in W.
\end{equation}
See (5.2.4) in \mcite{FHL}. Moreover, the action of $\mathrm{sl}(2,\C)=\C L(-1)+\C L(0)+\C L(1)$ satisfies the properties
\begin{equation}\mlabel{2.7}
\<L(-1)f,u\>=\<f,L(1)u\>,\quad \<L(0)f,u\>=\<f,L(0)u\>.
\end{equation}

In particular, if $(W,Y_W)$ is an ordinary $V$-module of conformal weight $\la$, then $(W',Y_{W'})$ is also an ordinary $V$-module of the same conformal weight $\la$, and we can construct the semidirect product vertex algebra $V\rtimes W'$.

\subsection{The vertex operator Yang-Baxter equation}
\mlabel{ss:voybe}

In order to properly define the tensor form of the VOYBE, we introduce some new notations of vertex operators based on the definition of contragredient module \meqref{3.9}. Let $(U,Y_U,\vac,\om)$ be a VOA. We define two vertex operators $Y_U'$ and $Y_U'^{\mathrm{op}}$ as follows. For any $a,b\in U$,
\begin{align}
Y_U'(a,z)b:&=Y_U(e^{zL(1)}(-z^{-2})^{L(0)}a,z^{-1})b=\sum_{m\in \Z}(a'_mb) z^{-m-1} ,\mlabel{3.51}\\
Y_U'^{\mathrm{op}}(a,z)b:&= Y_U(e^{-zL(1)}(-z^{-2})^{L(0)}a,-z^{-1})e^{zL(1)}b=\sum_{m\in \Z} (a_m'^{\mathrm{op}}b) z^{-m-1},\mlabel{3.52}
\end{align}

\begin{df}\mlabel{df2.3}
Let $(U,Y_U,\vac,\om)$ be a VOA and $m\in \Z$. Define three \name{$m$-dot products} by \meqref{3.51} and \meqref{3.52} as follows. For $\al, \b\in U$, define
\begin{align*}
\al\cdot _m \b:&=\Res_{z} z^m Y_U(\b,z)\al=\b_m\al,\\
\al\cdot'_m\b:&=\Res_z z^m Y'_U(\al,z)\b=\sum_{j\geq 0}\frac{(-1)^{\wt (\al)}}{j!} (L(1)^{j}\al)_{2\wt(\al)-m-j-2}\b=\al'_m\,\b,\\
\al\cdot'^{\mathrm{op}}_m \b:&=\Res_z z^m Y'^{\mathrm{op}}_U(\b,z)\al=\sum_{i\geq 0}\sum_{j\geq 0} \frac{(-1)^{\wt \b+m+i+1}}{j!i!}(L(1)^{j}\b)_{2\wt(\b)-m-j-i-2}L(1)^i\al=\b'^{\mathrm{op}}_m \al.
\end{align*}
It would be interesting to study algebraic properties of these nonassociative products. 
	
We denote the grading of the VOA $U$ by $U=\bigoplus_{n=0}^\infty U(n)$. By extending the bottom level $U(0)$ by a one-dimensional vector space with a basis $I$, we define the graded space
$$\tilde{U}:=U\op \C I.$$
So $\tilde{U}(0)=U(0) \op \C I$ and $\tilde{U}(n)=U(n)$ for $n\geq 1$. We then extend the above three $m$-dot products to $\tilde{U}$ by taking $I$ to be the identity element:
$$\al\cdot_m I=I\cdot_m \al=\al\cdot'_m I=I\cdot'_m\al=\al\cdot'^{\mathrm{op}}_m I=I\cdot'^{\mathrm{op}}_m\al=\al.$$
\end{df}

\begin{remark}\mlabel{rk3.4}
	For homogeneous elements $\al\in U(s)$ and $\b\in U(t)$, we observe that $	\al\cdot _m \b$, $	\al\cdot'_m\b$ and $\al\cdot'^{\mathrm{op}}_m \b$ are all homogeneous elements in $U$, with
	$\al\cdot_m \b\in U(s+t-m-1)$, $\al\cdot'_m\b\in U(t+m+1-s)$ and $	 \al\cdot'^{\mathrm{op}}_m \b\in U(s+m+1-t)$.
	\end{remark}

We use completion to extend the CYBE for finite-dimensional Lie algebras to VOAs.
\begin{df}\mlabel{df3.5}
	Let $M=\bigoplus_{n =0}^{\infty} M(n)$, $W=\bigoplus_{n=0}^{\infty}W(n)$, and $U=\bigoplus_{n=0}^\infty$ be $\N$-graded vector spaces
	with finite-dimensional graded parts.
	
	\begin{enumerate}
		\item   Define the \name{complete tensor products} $M\mhat{\o} W$ and $M\mhat{\o}W\mhat{\o} U$ by
		\begin{equation}\mlabel{3.11}
			M\mhat{\o} W:=\prod_{p,q=0}^{\infty} M(p)\otimes W(q), \qquad M\mhat{\o}W\mhat{\o} U:=\prod_{p,q,r=0}^\infty M(p)\otimes W(q)\otimes U(r).
		\end{equation}
		\item Let $D(U\mhat{\o}U):=\prod_{t=0}^\infty U(t)\otimes U(t)\subset U\mhat{\o}U$.
		An element $\al$ in $U\mhat{\o}U$ is called {\bf diagonal}. Then $\al=\sum_{t=0}^\infty \sum_{i=1}^{p_t} \al_i^t\otimes \b_i^t\in D(U\mhat{\o}U)$, where $\al_i^t,\b_i^t\in U(t), p_t\geq 1$ for all $t\geq 0$ and $i\geq 1$.
		\item A diagonal element $\al\in D(U\mhat{\o}U)$ is called {\bf skewsymmetric} if $\si(\al)=-\al$, where $$\si:U\mhat{\o}U\ra U\mhat{\o}U, \quad \si\Big(\sum_{t}\sum_{i}\al_i^t\otimes \b_i^t\Big)=\sum_t \sum_{i} \b_i^t\otimes \al_i^t.$$
		A skewsymmetric diagonal element $\al$ can be written as
		$\al=\gamma-\sigma(\gamma),$
		for $\gamma\in D(U\mhat{\o}U).$
		Let $\mathrm{SD}(U\mhat{\o}U)$ denote the subspace of skewsymmetric diagonal elements in $U\mhat{\o} U$.
	\end{enumerate}
\end{df}

Let $(U,Y_U,\vac,\om)$ be a VOA, and $r$ be a diagonal skewsymmetric two-tensor
$$r:=\sum_{t=0}^\infty r^t :=\sum_{t=0}^\infty\sum_{i=1}^{p_t} \al^t_i\otimes \b^t_i-\b^t_i\otimes \al^t_i\in \SD(U\mhat{\o} U).$$
So $r^t:=\sum_{i=1}^{p_t} \al^t_i\otimes \b^t_i-\b^t_i\otimes \al^t_i\in U(t)\o U(t)$ for $t\geq 0$. For any $t,s,q\in \N$, we define the elements $r^t_{12}$, $r^s_{13}$, and $r^q_{23}$ in ${\tilde U}^{\mhat{\o}3}$ as follows. 
\begin{align}
r^t_{12}&:= \sum_{i=1}^{p_t}(\al_i^t\otimes \b_i^t\otimes I-\b_i^t\otimes \al_i^t\otimes I),\mlabel{3.46}\\
r^s_{13}&:=\sum_{k=1}^{p_s}(\al_k^s\otimes I\otimes \b_k^s-\b_k^s\otimes I\otimes \al_k^s),\mlabel{3.47}\\
r^q_{23}&:=\sum_{l=1}^{p_r}(I\otimes \al_l^q\otimes \b_l^q-I\otimes \b_l^q\otimes \al_l^q).\mlabel{3.48}
\end{align}
Then we define $r_{12}:=\sum_{t=0}^\infty r^t_{12}\in \tilde{U}^{\mhat{\o}3}$, $r_{13}:=\sum_{s=0}^\infty r^s_{13}\in \tilde{U}^{\mhat{\o}3}$ and $r_{23}:=\sum_{q=0}^\infty r^q_{23}\in \tilde{U}^{\mhat{\o}3}$.

For $t,s,q\in \N$, and $m\in \Z$, we define the products $r^t_{12}\cdot_m r^s_{13}$, $ r^q_{23}\cdot'_m r^t_{12}$ and $r^s_{13}\cdot'^{\mathrm{op}}_m r^q_{23}$ by multiplying pure tensors in $\tilde{U}^{\mhat{\o}3}$ factor-wise using
the three products in Definition \mref{df2.3}, together with the distributivity for the sums:
	\begin{align*}
	r^t_{12}\cdot_m r^s_{13}&:=\sum_{i,k} ((\al^t_i)\cdot_m \al^s_k\o \b^t_i\o \b^s_k-(\al^t_i)\cdot_m \b^s_k\o \al^t_i\o \b^s_k-(\b^t_i)\cdot_m \al^s_k\o \al^t_i\o \b^s_k \numberthis\mlabel{2.16}\\
	&\quad +(\b^t_i)\cdot_m \b^s_k\o \al^t_i\o \al^s_k),\\
	 r^q_{23}\cdot'_m r^t_{12}&:=\sum_{l,i} (\al^t_i\o (\al^q_l) \cdot'_m \b^t_i\o \b^q_l -\b^t_i\o (\al^q_l)\cdot'_m\al^t_i \o \b^q_l - \al^t_i \o (\b^q_l)\cdot'_m \b^t_i\o \al^q_l \numberthis\mlabel{2.17}\\
	 &\quad +\b^t_i\o (\b^q_l)\cdot'_m\al^t_i\o \al^q_l),\\
	r^s_{13}\cdot'^{\mathrm{op}}_m r^q_{23}&:=\sum_{k,l} (\al^s_k \o \al^q_l \o (\b^s_k)\cdot'^{\mathrm{op}}_m \b^q_l-\al^s_k\o \b^q_l\o (\b^s_k)\cdot'^{\mathrm{op}}_m\al^q_l  -\b^s_k\o \al^q_l\o (\al^s_k)\cdot'^{\mathrm{op}}_m\b^q_l\numberthis \mlabel{2.18}\\
	&\quad + \b^s_k\o \b^q_l\o (\al^s_k)\cdot'^{\mathrm{op}}_m\al^q_l).
	\end{align*}
	Then we define
$$
r_{12}\cdot_m r_{13}:=\sum_{s,t=0}^\infty 	 r^t_{12}\cdot_m r^s_{13}, \quad
r_{23}\cdot'_m r_{12}:=	 \sum_{q,t=0}^\infty r^q_{23}\cdot'_m r^t_{12},\ \ \mathrm{and}\ \  r_{13}\cdot'^{\mathrm{op}}_m r_{23}:=\sum_{s,q=0}^\infty 	 r^s_{13}\cdot'^{\mathrm{op}}_m r^q_{23}.
$$
\begin{lm}\mlabel{lm3.5}
$	r_{12}\cdot_m r_{13}$, $r_{23}\cdot'_m r_{12}$ and $r_{13}\cdot'^{\mathrm{op}}_m r_{23}$ are well-defined elements in $U^{\mhat{\o}3}$. Let $\al=r_{12}\cdot_m r_{13}-r_{23}\cdot'_m r_{12}+r_{13}\cdot'^{\mathrm{op}}_m r_{23}\in U^{\mhat{\o}3}$. Then we have
\begin{equation}\mlabel{3.14'}
\al=\sum_{s,t\ge 0,\ s+t\geq m+1}\al_{s,t}=\sum_{s,t\ge 0,\ s+t\geq m+1} (r^s_{12}\cdot_m r_{13}^t-r_{23}^t\cdot'_m r_{12}^{t+s-m-1}+r_{13}^{t+s-m-1}\cdot'^{\mathrm{op}}_{m} r_{23}^s),
\end{equation}
where $\al_{s,t}:=(r^s_{12}\cdot_m r_{13}^t-r_{23}^t\cdot'_m r_{12}^{t+s-m-1}+r_{13}^{t+s-m-1}\cdot'^{\mathrm{op}}_{m} r_{23}^s)\in U(t+s-m-1)\o U(s)\o U(t)$ for each pair $(s,t)\in \N\times \N$ such that $s+t\geq m+1$.
	\end{lm}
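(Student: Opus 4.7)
The plan is to derive both claims from a careful grading analysis of the summands (via Remark \mref{rk3.4}), followed by a simple reindexing of the three double sums so that they share a common index set.

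First, I would apply Remark \mref{rk3.4} to the pure tensors appearing in \meqref{2.16}--\meqref{2.18}. Each of the four summands inside $r^s_{12}\cdot_m r^t_{13}$ has first factor of weight $s+t-m-1$, second factor of weight $s$, and third factor of weight $t$; hence $r^s_{12}\cdot_m r^t_{13}\in U(s+t-m-1)\o U(s)\o U(t)$, which is zero unless $s+t\ge m+1$. Using the weight formulas $\wt(\al\cdot'_m\b)=\wt(\b)+m+1-\wt(\al)$ and $\wt(\al\cdot'^{\mathrm{op}}_m\b)=\wt(\al)+m+1-\wt(\b)$ from Remark \mref{rk3.4}, the same bookkeeping gives $r^q_{23}\cdot'_m r^t_{12}\in U(t)\o U(t+m+1-q)\o U(q)$ and $r^s_{13}\cdot'^{\mathrm{op}}_m r^q_{23}\in U(s)\o U(q)\o U(s+m+1-q)$.

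For well-definedness in $U^{\mhat{\o}3}=\prod_{a,b,c\ge 0}U(a)\o U(b)\o U(c)$, I observe that in each of the three sums the index pair is uniquely determined by any two of the three factor weights of the corresponding summand, so distinct index pairs contribute to distinct (finite-dimensional) graded components. Consequently each component $U(a)\o U(b)\o U(c)$ receives a contribution from at most one index pair, and the sums converge trivially in the direct product, thereby producing genuine elements of $U^{\mhat{\o}3}$.

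Finally, for \meqref{3.14'} I would reparametrize the second and third sums to align with the first. Keeping $(s,t)$ in the first sum, substitute $(q,t)\mapsto(t,s+t-m-1)$ in the second sum so that $r^q_{23}\cdot'_m r^t_{12}$ becomes $r^t_{23}\cdot'_m r^{s+t-m-1}_{12}$, and $(s,q)\mapsto(s+t-m-1,s)$ in the third sum so that $r^s_{13}\cdot'^{\mathrm{op}}_m r^q_{23}$ becomes $r^{s+t-m-1}_{13}\cdot'^{\mathrm{op}}_m r^s_{23}$. A direct inspection confirms each substitution is a bijection onto $\{(s,t):s,t\ge 0,\ s+t\ge m+1\}$, and under them all three contributions at a given pair $(s,t)$ lie in the common graded component $U(s+t-m-1)\o U(s)\o U(t)$. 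Term-wise addition then yields $\al_{s,t}$ exactly as in \meqref{3.14'}. The only real difficulty is keeping the index substitutions straight; no deeper VOA machinery beyond Remark \mref{rk3.4} is required.
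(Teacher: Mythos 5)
Your proof is correct and follows essentially the same route as the paper's: both derive the graded locations of $r^t_{12}\cdot_m r^s_{13}$, $r^q_{23}\cdot'_m r^t_{12}$ and $r^s_{13}\cdot'^{\mathrm{op}}_m r^q_{23}$ from Remark \mref{rk3.4}, conclude well-definedness because each graded component of $U^{\mhat{\o}3}$ receives at most one contribution, and then reindex the second and third double sums by the same bijections onto $\{(s,t):s+t\geq m+1\}$. No gaps.
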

\begin{proof}
	By Remark \mref{rk3.4}, we have
	\begin{align}
r^t_{12}\cdot_m r^s_{13}& \in U(s+t-m-1)\o U(t)\o U(s),\mlabel{3.15}\\
 r^q_{23}\cdot'_m r^t_{12}&\in U(t)\o U(t+m+1-q)\o U(q), \mlabel{3.16}\\
 	r^s_{13}\cdot'^{\mathrm{op}}_m r^q_{23}& \in U(s)\o U(q)\o U(s+m+1-q).\mlabel{3.17}
\end{align}
By Definition \mref{df3.5}, we have $r_{12}\cdot_m r_{13}=\sum_{t,s=0}^\infty r^t_{12}\cdot_m r^s_{13}\in \prod_{t,s=0}^\infty U(s+t-m-1)\o U(t)\o U(s)$, which is a linear subspace of $U^{\mhat{\o}3}$. Thus, $r^t_{12}\cdot_m r^s_{13}$, and similarly $ r^q_{23}\cdot'_m r^t_{12}$ and $	 r^s_{13}\cdot'^{\mathrm{op}}_m r^q_{23}$, are well-defined elements in $U^{\mhat{\o}3}$.

Note that $U(s+t-m-1)=0$ if $s+t-m-1<0$. Exchanging $s$ and $t$ in \meqref{3.15},  we have $r_{12}\cdot_m r_{13}=\sum_{s,t=0,\ s+t\geq m+1}^\infty r^s_{12}\cdot_m r_{13}^t$, and $r^s_{12}\cdot_m r_{13}^t\in U(t+s-m-1)\o U(s)\o U(t)$ for each pair $(s,t)$.
Moreover, we observe that there is a bijection
\begin{align*}
\{(t,q)\in \N\times \N: t+m+1-q\geq 0\}&\ra \{ (s_0,t_0)\in \N\times \N: s_0+t_0-m-1\geq 0\},\\
(t,q)&\mapsto (s_0,t_0)=(t+m+1-q,q),
\end{align*}
whose inverse is given by $ (s_0,t_0)\mapsto (t,q):=(s_0+t_0-m-1,t_0)$. Thus we can use the changes of variable $t$ and $q$ in \meqref{3.16} by $s_0+t_0-m-1$ and $t_0$ respectively, and obtain
\begin{equation}\mlabel{3.18}
r_{23}\cdot'_m r_{12}=\sum_{q,t\geq 0,\ t+m+1-q\geq 0}  r^q_{23}\cdot'_m r^t_{12}=\sum_{s_0,t_0\geq 0,\ s_0+t_0-m-1\geq 0} r_{23}^{t_0}\cdot'_m r_{12}^{t_0+s_0-m-1},
\end{equation}
and $r_{23}^{t_0}\cdot'_m r_{12}^{t_0+s_0-m-1}\in U(t_0+s_0-m-1)\o U(s_0)\o U(t_0)$ for each pair $(s_0,t_0)$. Finally, there is a bijection
\begin{align*}
\{(q,s)\in \N\times \N: s+m+1-q\geq 0\}&\ra \{(s_1,t_1): s_1+t_1-m-1\geq 0 \},\\
(q,s)&\mapsto (s_1,t_1)=(q,s+m+1-q),
\end{align*}
whose inverse is given by $(s_1,t_1)\mapsto (q,s)=(s_1,t_1+s_1-m-1)$. Then changing the variables $s$ and $q$ in \meqref{3.17} by $s_1$ and $t_1+s_1-m-1$ respectively, we obtain
\begin{equation}\mlabel{3.19}
	r_{13}\cdot'^{\mathrm{op}}_m r_{23}=\sum_{q,s\geq 0,\ s+m+1-q\geq 0} 	 r^s_{13}\cdot'^{\mathrm{op}}_m r^q_{23}=\sum_{s_1,t_1\geq 0,\ s_1+t_1-m-1\geq 0}	 r^{t_1+s_1-m-1}_{13}\cdot'^{\mathrm{op}}_m r^{s_1}_{23},
\end{equation}
and $r^{t_1+s_1-m-1}_{13}\cdot'^{\mathrm{op}}_m r^{s_1}_{23}\in U(t_1+s_1-m-1)\o U(s_1)\o U(t_1)$. Now \meqref{3.14'} follows after we replace the variables $(s_0,t_0)$ in \meqref{3.18} and $(s_1,t_1)$ in \meqref{3.19} by $(s,t)$.
\end{proof}

\begin{remark}
In fact, we can also define $r_{12}$, $r_{13},$ and $r_{23}$ by \meqref{3.46}-\meqref{3.48} for arbitrary diagonal elements $r\in D(U\mhat{\o}U)$ (not necessarily skewsymmetric ones), and by a proof similar to the one for Lemma \mref{lm3.5}, the products $r_{12}\cdot_m r_{13}$, $r_{23}\cdot'_m r_{12}$ and $r_{13}\cdot'^{\mathrm{op}}_m r_{23}$ are well defined. But we will be focusing on the skewsymmetric two tensors $r$ for the rest of the paper.
\end{remark}

Now we give the definition of the (classical) Yang-Baxter equation for VOAs.
\begin{df}\mlabel{df:VOYBE}
Let $(U,Y_U,\vac,\om)$ be a VOA and $r\in \SD(U\mhat{\o} U)$ be skewsymmetric.
	\begin{enumerate}
\item Let $m\in \Z$ be a fixed integer.	$r$ is called a \name{skewsymmetric solution} to the {\bf $m$-vertex operator Yang-Baxter equation ($m$-VOYBE)} if
	\begin{equation}\mlabel{3.22}
	r_{12}\cdot_m r_{13}- r_{23}\cdot'_m r_{12}+r_{13}\cdot'^{\mathrm{op}}_m r_{23}=0.
	\end{equation}
\item $r$ is called a skewsymmetric solution to the {\bf vertex operator Yang-Baxter equation (VOYBE)} if it is a solution to $m$-VOYBE for every $m\in \N$. In other words, we have
\begin{equation}\mlabel{3.23}
	r_{12}\cdot_z r_{13}-r_{23}\cdot_z r_{12}+r_{13}\cdot_z r_{23}=0,
\vspace{-.2cm}
\end{equation}
where we denote \vspace{-.1cm}
$$r_{12}\cdot_z r_{13}:= \sum_{m\in \Z} (r_{12}\cdot_m r_{13}) z^{-m-1},  r_{23}\cdot_z r_{12}:=\sum_{m\in \Z} (r_{23}\cdot'_m r_{12}) z^{-m-1}, r_{13}\cdot_z r_{23}:=\sum_{m\in \Z}(r_{13}\cdot'^{\mathrm{op}}_m r_{23})z^{-m-1}.$$
	\end{enumerate}
\end{df}
See below for their operator forms and see Section~\mref{s:cybe} for the relation with the CYBE.

\vspace{-.2cm}
\subsection{Relative Rota-Baxter operators}
\mlabel{ss:relrbo}

Now we introduce the notion of relative Rota-Baxter operators for VOAs. It is a generalization of the $R$-matrix of VOAs introduced by Xu~\mcite{X1} and serves here as the operator form of the classical Yang-Baxter equation for VOAs, in analog to the case of the CYBE for Lie algebras~\mcite{K1,S}.

Let $(V,Y,\vac,\om)$ be a VOA, $(W,Y_W)$ be an admissible $V$-module. Let $m\in \Z$, $a\in V$ and $u\in W$. Recall
$a_m u=\Res_{z}z^m Y_W(a,z)u$, and define an operator
\begin{equation}\mlabel{3.7}
u(m): V\to W, \quad  u(m)a:=\Res_{z} z^m Y_{WV}^W(u,z)a.
\end{equation}

\begin{df}\mlabel{df3.1}
Let $(V,Y,\vac,\om)$ be a vertex operator algebra and $(W,Y_{W})$ be an admissible $V$-module. Let
$T:W\ra V$ be a linear map.
\begin{enumerate}
	\item For $m\in \Z$, $T$ is called an \name{$m$-relative Rota-Baxter operator ($m$-relative RBO)}, or an \name{$m$-$\mathcal{O}$-operator}, of the VOA $V$ associated to the $V$-module $W$, if
	\begin{equation}\mlabel{3.8}
		T(u)_m T(v) =T(T(u)_mv)+ T(u(m)T(v)), \quad u, v\in W.
	\end{equation}
	\item $T$ is called a \name{relative Rota-Baxter operator (relative RBO)}, or an \name{$\mathcal{O}$-operator}, of the VOA $V$ associated to the $V$-module $W$, if it is an $m$-relative RBO for every $m\in \Z$, that is, the following equation holds.
	\begin{equation}\mlabel{3.9b}
		Y(Tu,z) Tv=T(Y_{W}(Tu,z)v+Y_{WV}^{W}(u,z)Tv),\quad u, v\in W.
	\end{equation}
Here $Y_{WV}^W$ is given by the skewsymmetry formula \meqref{2.12}.
	
	\item An $m$-relative RBO $T: W\ra V$ is called \name{homogeneous of degree $N\in \Z$} if $T(W(n))\ssq V_{n+N}$ for each $n\in \N$. A degree $0$ relative $m$-RBO is called \name{level preserving}.
\item When $W$ is the adjoint representation of $V$ on itself, then an ($m$-)relative RBO is simply called an \name{($m$-)RBO} on $V$.
\end{enumerate}
\end{df}

For a vector space $A$ with a binary operation $\cdot$, a \name{Rota-Baxter oeprator (RBO)} (of weight $0$) on $A$ is a linear operator $P:A\to A$ such that
$$ P(u)\cdot P(v)=P(u\cdot P(v))+P(P(u)\cdot v), \quad u, v\in A.$$
This applies in particular when $\cdot$ is the associative product or the Lie bracket where the notion was introduced by G.~Baxter~\mcite{BaG} and Semenov-Tian-Shanski~\mcite{S} respectively.

Motivated by the operator form of the CYBE whose solutions are called classical $r$-matrices, the notion of a classical $R$-matrix for a vertex operator algebra $V$ was introduced in~\mcite{X1}. This notion is precisely the one for RBOs on $V$. As we will see in Corollary~\mref{co:rrb}, RBOs on $V$ are indeed solutions of the VOYBE, justifying the term of $R$-matrix.

For a given $\lambda\in \C$, there is a more general notion of $m$-relative RBO of weight $\lambda$, of which the above notion is the special case when $\lambda=0$.  In this paper, an ($m$-)relative RBO is always assumed to have weight $0$.
The general case is considered in~\mcite{BGLW}.

\begin{example}\mlabel{ex2.10}
Let $V=V_0\op V_+$ be a CFT-type VOA, and let $P:V_1\ra V_1$ be a RBO of the Lie algebra $V_1$ of weight $0$. Extend $P$ to $T: V\ra V$ by
\begin{equation}\mlabel{2.11'}
	T(\vac):=\mu \vac,\quad T|_{V_1}:=P,\quad \mathrm{and}\quad  T|_{V_n}:=0,\  n\geq 2,
\end{equation}
where $\mu\in \C$ is a fixed number. We claim that $T:V\ra V$ is a level-preserving $0$-relative RBO. Indeed, clearly $T$ is level-preserving. For $a\in V$, we have
\begin{align*}
	T(\vac)_0 T(a)&=0=T(T(\vac)_0 a)+T(\vac_0 T(a)),\\
	T(a)_0 T(\vac)&= 0=T(a_0 T(\vac))+T(T(a)_0\vac),
\end{align*}
since $\vac_0 a=a_0 \vac=0$. On the other hand, for homogeneous elements $a,b\in V_+$, if either $\wt (a)>1$  or $\wt (b)>1$, then by \meqref{2.11'} and the fact that $T(a)_0 b$ and $a_0T(b)$ are contained in $V_{\wt (a)+\wt (b)-1}$, we have
$T(a)_0 T(b)=0=T(T(a)_0b)+T(a_0 T(b)).$
Finally, if $a,b\in V_1$, then clearly we have $T(a)_0 T(b)=T(T(a)_0b)+T(a_0 T(b))$ since $T=P$ on $V_1$. Thus $T:V\ra V$ is a level-preserving $0$-relative RBO.
\end{example}
\begin{example}
	Let $V=M_{\hat{\h}}(1,0)$ be the rank-one Heisenberg VOA (cf. \mcite{FLM}), where $\h=\C \al$ and $(\al|\al)=1$. Recall that $V_1=\C \al(-1)\vac$ and $V_2=\C \al(-1)\al\op \C \al(-2)\vac$.
	Define $T:V\ra V$ by
	\begin{align*}
	&T(\vac)=T(\al(-1)\vac)=0,\quad  T|_{V_n}=0,\quad n\geq 3,\\
	&T(\al(-1)\al)=\al(-1)\al+\al(-2)\vac,\quad \mathrm{and}\quad T(\al(-2)\vac)=-\al(-1)\al-\al(-2)\vac.
	\end{align*}
	For $a,b\in V_2$, since $T(a)_1b$ and $a_1T(b)$ are contained in $V_{2}$, by a similar argument as for the previous example, we can easily show that $T:V\ra V$ is a level-preserving $1$-RBO.
\end{example}
\begin{example}
	Let $V=M_{\hat{\h}}(1,0)$ be the rank-one Heisenberg VOA and let $W=M_{\hat{\h}}(1,\la)$. Recall \mcite{FLM} that $W=M_{\hat{\h}}(1,0)\o \C e^\la$ with $W(0)=\C e^\la$ and $W(1)=\C \al(-1)e^\la$. Define $T:W\ra V$ by
	\begin{equation}\mlabel{2.12'}
	T(e^\la)=0,\quad T(\al(-1)e^\la)=\vac,\quad \mathrm{and}\quad T|_{W(n)}=0,\quad n\geq 2.
	\end{equation}
	Then $T$ is homogeneous of degree $-1$. If $u,v\in W$ are homogeneous, then $(Tu)_{0}v$ and $u(0)T(v)$ are contained in $W(\deg u-2+\deg v)$, and by \meqref{2.12'}, we have
	$(Tu)_{0} (Tv)=0=T((Tu)_{0}v+u(0)Tv).$
	Thus, $T:W\ra V$ is a degree $-1$ homogeneous $0$-relative RBO.
\end{example}

\subsection{From solutions of the VOYBE to relative RBOs}

Our definition of the classical Yang-Baxter equation for VOAs is motivated by the axiom of a relative RBO for VOAs. Indeed, let $(U,Y_U, \vac, \om)$ be a VOA, and let $U'=\bigoplus_{t=0}^\infty U(t)^\ast$ be the contragredient module of $U$~\mcite{FHL}. For the notations in Definition \mref{df3.5}, we have the following identifications of vector spaces:
\begin{equation}\mlabel{3.24'}
\Phi: D(U\mhat{\o} U)=\prod_{t=0}^\infty U(t)\o U(t)\cong \prod_{t=0}^\infty \Hom(U(t)^\ast,U(t))\cong \Hom_{\mathrm{LP}}(U',U),
\end{equation}
where $\Hom_{\mathrm{LP}}(U',U)\subset \Hom(U',U)$ denotes the subspace of level-preserving linear maps.

Let $r=\sum_{t=0}^\infty r^t=\sum_{t=0}^\infty\sum_{i=1}^{p_t} \al^t_i\otimes \b^t_i-\b^t_i\otimes \al^t_i\in\SD(U\mhat{\o} U)$ be a diagonal skewsymmetric element, and denote $\Phi(r)\in \Hom_{\mathrm{LP}}(U',U)$ by $T_r$. Then $T_r$ is given by
	\begin{equation}\mlabel{3.23b}
T_r(f):=\sum_{i=1}^{p_t} \al^t_i \<f,\b^t_i\>-\b^t_i\<f,\al^t_i\>,\quad t\in \N, f\in U(t)^\ast.
\end{equation}

\begin{thm}\mlabel{thm:main1}
With the setting as above, for a given $m\in \Z$, $r$ is a skewsymmetric solution to the $m$-VOYBE if and only if $T_r: U'\ra U$ is a level-preserving $m$-relative RBO, that is, for any $f,g\in U'$, the following equation holds:
 \begin{equation}\mlabel{3.24}
 T_r(f)_m T_r(g)=T_r(T_r(f)_m g)+T_r(f(m)T_r(g)).
 \end{equation}
In particular, $r$ is a skewsymmetric solution to the VOYBE if and only if $T_r:U'\ra U$ is a level-preserving relative RBO.
	\end{thm}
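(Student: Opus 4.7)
The plan is to prove the biconditional by contracting slots 2 and 3 of the three-tensor form of the $m$-VOYBE against pairs of elements of $U'$, translating the tensor equation term-by-term into the operator equation \meqref{3.24}. By Lemma~\mref{lm3.5}, the $m$-VOYBE is equivalent to the vanishing of every graded component $\alpha_{s,t}\in U(s+t-m-1)\otimes U(s)\otimes U(t)$. Since each $U(n)$ is finite-dimensional, $\alpha_{s,t}=0$ if and only if its contraction with every $f\otimes g\in U(s)^\ast\otimes U(t)^\ast$ in slots 2 and 3 vanishes in $U(s+t-m-1)$.

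I then compute each of the three contractions. For $r_{12}^s\cdot_m r_{13}^t$, a direct term-by-term expansion using the identity $\alpha\cdot_m\beta=\beta_m\alpha$ from Definition~\mref{df2.3} and the formula \meqref{3.23b} for $T_r$ shows that the contraction equals $T_r(g)_m T_r(f)$ (up to the labeling convention on $f,g$). For $r_{23}^t\cdot'_m r_{12}^{t+s-m-1}$, the crucial observation is that the product $\cdot'_m$ built from $Y_U'$ in \meqref{3.51} is the transpose of the contragredient $U$-action on $U'$: combining \meqref{3.51} with the contragredient formula \meqref{3.9} yields the pairing identity
\[
\langle a_m h,\,u\rangle=\langle h,\,a\cdot'_m u\rangle,\qquad a,u\in U,\ h\in U',
\]
and applying this identity together with the linearity of $T_r$ converts the contraction into $T_r(T_r(g)_m f)$. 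For $r_{13}^{t+s-m-1}\cdot'^{\mathrm{op}}_m r_{23}^s$, the analogous key identity is
\[
\langle g(m)a,\,u\rangle=\langle g,\,u\cdot'^{\mathrm{op}}_m a\rangle,\qquad a,u\in U,\ g\in U',
\]
which follows by combining \meqref{3.52}, the skewsymmetry formula \meqref{2.12} for $Y_{WV}^W$, and the contragredient formula \meqref{3.9}, with the $\mathrm{sl}_2$-pairing relations \meqref{2.7} converting the $e^{zL(-1)}$-conjugation into an $e^{zL(1)}$-twist on the opposite side. This converts the contraction into $-T_r(g(m)T_r(f))$, where the minus sign arises from the reversed argument order of $\cdot'^{\mathrm{op}}_m$ relative to $\cdot'_m$.

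Combining the three contractions with the signs of \meqref{3.14'}, one obtains
\[
\bigl\langle\alpha_{s,t},\,f\otimes g\bigr\rangle_{2,3}=T_r(g)_m T_r(f)-T_r(T_r(g)_m f)-T_r(g(m)T_r(f)).
\]
Hence $\alpha_{s,t}=0$ for every $(s,t)$ with $s+t\ge m+1$ if and only if the relative RBO equation \meqref{3.24} holds for all graded pairs $(f,g)$, equivalently for all $f,g\in U'$. The ``in particular'' statement for the full VOYBE follows by summing over $m$ to assemble the formal power-series identities \meqref{3.23} and \meqref{3.9b}. The main obstacle will be establishing the pairing identity for $\cdot'^{\mathrm{op}}_m$: its mode expansion in Definition~\mref{df2.3} involves a double infinite sum in $i,j$ with $L(1)^j$- and $L(1)^i$-shifts and intricate signs coming from $(-z^{-2})^{L(0)}$ and the substitution $z\mapsto -z^{-1}$, so matching it to the composition of the $e^{zL(-1)}$-conjugation in \meqref{2.12} with the contragredient twist in \meqref{3.9} demands careful use of the standard $L(-1)/L(1)$-conjugation formulas on the vertex operator $Y_U$.
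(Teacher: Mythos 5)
Your proposal is correct and follows essentially the same route as the paper's proof: reduce to the graded components $\alpha_{s,t}$ via Lemma~\mref{lm3.5}, contract slots $2$ and $3$ against $U(s)^\ast\otimes U(t)^\ast$ (legitimate by finite-dimensionality of the graded pieces), and translate the three products into the three terms of \meqref{3.24} using exactly the pairing identities $\langle a_m f,b\rangle=\langle f,a\cdot'_m b\rangle$ and $\langle f(m)a,b\rangle=\langle f,b\cdot'^{\mathrm{op}}_m a\rangle$ that the paper derives from \meqref{3.51}, \meqref{3.52}, \meqref{2.12} and \meqref{3.9}. The sign and labeling conventions you flag work out as you predict, and your identification of the $\cdot'^{\mathrm{op}}_m$ pairing identity as the main technical step matches where the paper spends its effort.
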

\begin{proof}
	For given $a,b,c\in U$, with $b,c$ homogeneous, define a linear functional $$a\otimes b\otimes c: U'\otimes U'\ra U, \quad (a\otimes b\otimes c)(g\otimes f):=a\<g,b\>\<f,c\>,$$
	for homogeneous $f,g\in U'$.
	
	For $f\in U(t)^\ast$ and $g\in U(s)^\ast$, by \meqref{3.23b}, we have $T_r(g):=\sum_{j=1}^{p_s} \al^s_j \<g,\b^s_j\>-\b^s_j\<g,\al^s_j\>$. For a given $m\in \Z$, by Definition \mref{df2.3} and \meqref{3.15}, we derive
	\begin{align*}
T_r(f)_mT_r(g)&=\sum_{i,j}(\al^t_i \<f,\b^t_i\>-\b^t_i\<f,\al^t_i\>)_m(\al^s_j \<g,\b^s_j\>-\b^s_j\<g,\al^s_j\>)\\
&= \sum_{i,j} (\al^t_i)_m \al^s_j \<f,\b^t_i\>\<g,\b^s_j\>-\sum_{i,j}(\al^t_i)_m \b^s_j\<f,\b^t_i\>\<g,\al^s_j\>\\
&\ -\sum_{i,j} (\b^t_i)_m \al^s_j \<f,\al^t_i\>\<g,\b^s_j\>+\sum_{i,j} (\b^t_i)_m \b^s_j \<f,\al^t_i\>\<g,\al^s_j\>\\
&=\sum_{i,j} ((\al^s_j)\cdot_m \al^t_i\o \b^s_j\o \b^t_i)(g\o f)-\sum_{i,j} ((\b^s_j)\cdot_m \al^t_i\o \al^s_j\o \b^s_i)(g\o f)\\
&\ -\sum_{i,j} ((\al^s_j)\cdot_m \b^t_i \o \b^s_j\o \al^t_i)(g\o f)+\sum_{i,j} ((\b^s_j)\cdot_m \b^t_i\o \al^s_j\o \al^t_i)(g\o f)\\
	&=(r^s_{12}\cdot_m r^t_{13})(g\otimes f).
\end{align*}
By Definition \mref{df2.3} and \meqref{3.9b}, we have $\<a_m f, b\>=\<f,a'_mb\>=\<f,a\cdot'_mb\>$ for $a,b\in U$ and $f\in U'$. Since $T_r(T_r(f)_m g)\in U(t+s-m-1)^\ast$, by Remark \mref{rk3.4}, \meqref{3.23b}, and \meqref{3.16}, we obtain
	\begin{align*}	
T_r(T_r(f)_mg)&=\sum_k \al_k^{t+s-m-1}\<T_r(f)_mg, \b^{t+s-m-1}_k\>- \sum_k \b^{t+s-m-1}_k\<T_r(f)_mg,\al^{t+s-m-1}_k\>\\
&=\sum_{k,i} \al_k^{t+s-m-1}\<g, (\al^t_i)\cdot'_m \b_k^{t+s-m-1}\>\<f,\b^t_i\>-\sum_{k,i} \al_k^{t+s-m-1}\<g,(\b^t_i)\cdot'_m \b_k^{t+s-m-1}\>\<f,\al^t_i\>\\
&\ -\sum_{k,i} \b_k^{t+s-m-1} \<g, (\al^t_i)\cdot'_m \al_k^{t+s-m-1}\>\<f,\b^t_i\>+\sum_{k,i}\b_k^{t+s-m-1}\<g,(\b^t_i)\cdot'_m \al_k^{t+s-m-1}\>\<f,\al^t_i\>\\
&=\sum_{k,i} ((\al_k^{t+s-m-1}\o (\al^t_i)\cdot'_m \b_k^{t+s-m-1}\o \b^t_i)- (\al^{t+s-m-1}_k\o (\b^t_i)\cdot'_m \b_k^{t+s-m-1}\o \al^t_i))(g\o f)\\
&\ -\sum_{k,i} ((\b^{t+s-m-1}_k\o (\al^t_i)\cdot'_m \al_k^{t+s-m-1}\o \b^t_i)+(\b^{t+s-m-1}_k\o (\b^t_i)\cdot'_m \al_k^{t+s-m-1}\o \al^t_i))(g\o f)\\
	&=(r^t_{23}\cdot'_{m} r^{t+s-m-1}_{12})(g\otimes f).
	\end{align*}
	Finally, by Definition \mref{df2.3}, \meqref{2.12}, \meqref{3.9b}, and the fact that $\<L(-1)f,a\>=\<f,L(-1)a\>$ for $f\in U'$ and $a\in U$ (see~\cite[5.2.10]{FHL}), we have
	\begin{align*}
	\<f(m)a,b\>&=\Res_{z} z^m \<Y_{U'U}^{U'}(f,z)a,b\>=\Res_{z} z^m \<e^{zL(-1)}Y_{U'}(a,-z)f,b\>\\
	&=\Res_{z} z^m \<f, Y_U(e^{-zL(1)}(-z^{-2})^{L(0)}a,-z^{-1})e^{zL(1)}b\>=\Res_{z}z^m \<f, Y_{U}'^{\mathrm{op}}(a,z)b\>\\
	&=\<f,b\cdot'^{\mathrm{op}}_m a\>,\quad f\in U', a, b\in U.
	\end{align*}
Since $Y_{U'U}^{U'}$ is an intertwining operator (see Section 5.4 in \mcite{FHL}) and $T_r$ is level-preserving, we have $T_r(f(m) T_r(g))\in U(t+s-m-1)^\ast$. Then
\begin{align*}	
&T_r(f(m) T_r(g))=\sum_{k}\al^{t+s-m-1}_k \<f(m) T_r(g),\b^{t+s-m-1}_k\>-\b^{t+s-m-1}_k\<f(m)T_r(g),\al^{t+s-m-1}_k\>\\
=&\sum_{k,j} \al^{t+s-m-1}_k \<f, (\b^{t+s-m-1}_k)\cdot'^{\mathrm{op}}_m \al^s_j\>\<g,\b^s_j\>-\sum_{k,j}\al^{t+s-m-1}_k\<(\b^{t+s-m-1}_k)\cdot'^{\mathrm{op}}_m \b^s_j\>\<g,\al^s_j\>\\
&\ -\sum_{k,j} \b^{t+s-m-1}_k\<f,(\al^{t+s-m-1}_k)\cdot'^{\mathrm{op}}_m \al^s_j\>\<g,\b^s_j\>+\sum_{k,j}\b^{t+s-m-1}_k\<f, (\al^{t+s-m-1}_k)\cdot'^{\mathrm{op}}_m \b^s_j\>\<g,\al^s_j\>\\
=&\sum_{k,j}((\al^{t+s-m-1}_k\o \b^s_j \o (\b^{t+s-m-1}_k)\cdot'^{\mathrm{op}}_m\al^s_j)-(\al^{t+s-m-1}_k\o \al^s_j\o (\b^{t+s-m-1}_k)\cdot'^{\mathrm{op}}_m\b^s_j)(g\o f)\\
&\ -\sum_{k,j} ((\b^{t+s-m-1}_k\o \b^s_j \o (\al^{t+s-m-1}_k)\cdot'^{\mathrm{op}}_m\al^s_j)+(\b^{t+s-m-1}_k\o \al^s_j\o (\al^{t+s-m-1}_k)\cdot'^{\mathrm{op}}_m\b^s_j))(g\o f)\\
=&-(r^{t+s-m-1}_{13}\cdot'^{\mathrm{op}}_{m}r^s_{23})(g\otimes f).
		\end{align*}
	Thus, for $f\in U(t)^\ast$ and $g\in U(s)^\ast$, we have
\begin{equation}\mlabel{3.25}
T_r(f)_m T_r(g)-T_r(T_r(f)_m g)-T_r(f(m)T_r(g))
= (r^s_{12}\cdot_m r_{13}^t-r_{23}^t\cdot'_m r_{12}^{t+s-m-1}+r_{13}^{t+s-m-1}\cdot'^{\mathrm{op}}_{m} r_{23}^s)(g\o f),
\end{equation}
which is contained in $ U(t+s-m-1)$. Now for $T_r$ to satisfy \meqref{3.24} means that $T_r$ satisfies
$$T_r(f)_m T_r(g)-T_r(T_r(f)_m g)-T_r(f(m)T_r(g))=0, \quad  f\in U(t)^\ast, g\in U(s)^\ast, s,t\geq 0.$$
By \meqref{3.25}, this holds if and only if
\begin{align*}
 &\<T_r(f)_m T_r(g)-T_r(T_r(f)_m g)-T_r(f(m)T_r(g)),h\>\\
&= \<(r^s_{12}\cdot_m r_{13}^t-r_{23}^t\cdot'_m r_{12}^{t+s-m-1}+r_{13}^{t+s-m-1}\cdot'^{\mathrm{op}}_{m} r_{23}^s)(g\o f), h\>\\
&=\<(r^s_{12}\cdot_m r_{13}^t-r_{23}^t\cdot'_m r_{12}^{t+s-m-1}+r_{13}^{t+s-m-1}\cdot'^{\mathrm{op}}_{m} r_{23}^s) ,g\o f\o h\>=0,
\end{align*}
for $f\in U(t)^\ast$, $g\in U(s)^\ast$,  $h\in U(t+s-m-1)^\ast$ and  $s,t\geq 0$. Since the element  $r^s_{12}\cdot_m r_{13}^t-r_{23}^t\cdot'_m r_{12}^{t+s-m-1}+r_{13}^{t+s-m-1}\cdot'^{\mathrm{op}}_{m} r_{23}^s \in U^{\mhat{\o}3}$ is homogeneous in $U(s+t-m-1)\o U(s)\o U(t)$ by Lemma \mref{lm3.5}, and the space $U(s+t-m-1)\o U(s)\o U(t)$ is finite dimensional, it follows that $T_r$ satisfies \meqref{3.24} if and only if $r^s_{12}\cdot_m r_{13}^t-r_{23}^t\cdot'_m r_{12}^{t+s-m-1}+r_{13}^{t+s-m-1}\cdot'^{\mathrm{op}}_{m} r_{23}^s=0$ for $(s,t)\in \N\times \N$ with $s+t\geq m+1$. The latter holds if and only if
$	r_{12}\cdot_m r_{13}- r_{23}\cdot'_m r_{12}+r_{13}\cdot'^{\mathrm{op}}_m r_{23}=0$
by Lemma \mref{lm3.5}.	
	\end{proof}

\delete{
We have the following direct consequence from the proof of Theorem \mref{thm:main1}.
\begin{coro}\mlabel{coro2.15}
	With the setting as above, for fixed integers $m\in \Z$ and $s,t\in \N$, the operator $T_{r}$ satisfies \meqref{3.24} for $f\in U(s)^\ast$ and $g\in U(t)^\ast$ if and only if the homogeneous components of $r=\sum_{t=0}^\infty r^t$  satisfy
	\begin{equation}
	 r^s_{12}\cdot_m r_{13}^t-r_{23}^t\cdot'_m r_{12}^{t+s-m-1}+r_{13}^{t+s-m-1}\cdot'^{\mathrm{op}}_{m} r_{23}^s=0
	\end{equation}
	in the homogeneous subspace $U(s+t-m-1)\o U(s)\o U(t)$ of $U^{\mhat{\o}3}$.
	\end{coro}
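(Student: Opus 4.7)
The plan is to translate, term by term, the relative RBO identity \meqref{3.24} for $T_r$ into a statement about the tensor $r$ by evaluating both sides on pure elements $g\otimes f\in U(s)^*\otimes U(t)^*$ (and pairing the output with a test element of $U(s+t-m-1)^*$). The identification $\Phi$ in \meqref{3.24'} converts a level-preserving linear map between graded duals into the diagonal tensor $r$ via \meqref{3.23b}; the job is to show that the three terms of \meqref{3.24} match the three terms of the $m$-VOYBE \meqref{3.22}, component by bihomogeneous component.

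First I would compute $T_r(f)_m T_r(g)$ directly by expanding $T_r(f)$ and $T_r(g)$ using \meqref{3.23b} and applying the mode $_m$ bilinearly. Collecting the four terms and comparing with the definition of $r^s_{12}\cdot_m r^t_{13}$ in \meqref{2.16}, one sees that $T_r(f)_m T_r(g)=(r^s_{12}\cdot_m r^t_{13})(g\otimes f)$, where on the right-hand side the tensor is viewed as a multilinear map into $U$ via $(a\otimes b\otimes c)(g\otimes f)=a\langle g,b\rangle\langle f,c\rangle$. Next, for the term $T_r(T_r(f)_m g)$, the key tool is the adjoint relation $\langle a_m f,b\rangle=\langle f,a\cdot'_m b\rangle$, which is immediate from the contragredient formula \meqref{3.9} and Definition \mref{df2.3}. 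Using this identity to push $(T_r(f))_m$ off $g$ and onto elements of $U$, and noting by Remark \mref{rk3.4} that the output lands in $U(t+s-m-1)$ so that $T_r$ can be applied, I match with the definition of $r^t_{23}\cdot'_m r^{t+s-m-1}_{12}$ in \meqref{2.17}.

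The third term $T_r(f(m)T_r(g))$ is handled similarly but requires the skewsymmetry formula \meqref{2.12} for $Y^{U'}_{U'U}$. Expanding
\[
\langle f(m)a,b\rangle=\Res_z z^m\langle e^{zL(-1)}Y_{U'}(a,-z)f,b\rangle,
\]
moving $e^{zL(-1)}$ to the second argument as $e^{zL(1)}$ via \meqref{2.7}, and applying \meqref{3.9} yields $\langle f(m)a,b\rangle=\langle f,b\cdot'^{\mathrm{op}}_m a\rangle$, precisely matching \meqref{3.52} and Definition \mref{df2.3}. Since $Y_{U'U}^{U'}$ is an intertwining operator the resulting element lies in $U(t+s-m-1)^*$, so $T_r$ applies cleanly; collecting signs produces $-(r^{t+s-m-1}_{13}\cdot'^{\mathrm{op}}_m r^s_{23})(g\otimes f)$ and the overall minus sign is absorbed into the $+r_{13}\cdot'^{\mathrm{op}}_m r_{23}$ term of the VOYBE.

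Combining the three computations gives the compact identity
\[
T_r(f)_m T_r(g)-T_r(T_r(f)_m g)-T_r(f(m)T_r(g))
=\bigl(r^s_{12}\cdot_m r^t_{13}-r^t_{23}\cdot'_m r^{t+s-m-1}_{12}+r^{t+s-m-1}_{13}\cdot'^{\mathrm{op}}_m r^s_{23}\bigr)(g\otimes f),
\]
for every $s,t\in\N$ and $f\in U(t)^*$, $g\in U(s)^*$. By Lemma \mref{lm3.5} the right-hand three-tensor is the bihomogeneous $(U(s+t-m-1)\otimes U(s)\otimes U(t))$-component of $r_{12}\cdot_m r_{13}-r_{23}\cdot'_m r_{12}+r_{13}\cdot'^{\mathrm{op}}_m r_{23}$. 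Since each $U(n)$ is finite dimensional, the triple pairing with an arbitrary $h\in U(s+t-m-1)^*$ detects vanishing of this component. The equivalence of \meqref{3.24} with the $m$-VOYBE \meqref{3.22} then follows by quantifying over $s,t\geq 0$, and the VOYBE case is the conjunction over all $m\in\Z$. I expect the main technical obstacle to be the careful bookkeeping of signs, powers of $L(1)$ and conformal weights when verifying the three identities $\langle a_m f,b\rangle=\langle f,a\cdot'_m b\rangle$ and $\langle f(m)a,b\rangle=\langle f,b\cdot'^{\mathrm{op}}_m a\rangle$, since the $\cdot'_m$ and $\cdot'^{\mathrm{op}}_m$ products carry nontrivial $(-1)^{\wt}$ and $e^{\pm zL(1)}$ factors from the contragredient and skewsymmetry formulas; once these two dualities are established, the remaining matching is mechanical.
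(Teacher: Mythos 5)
Your proposal is correct and follows essentially the same route as the paper: the corollary is extracted from the proof of Theorem~\ref{thm:main1}, where the identity $T_r(f)_m T_r(g)-T_r(T_r(f)_m g)-T_r(f(m)T_r(g))=(r^s_{12}\cdot_m r_{13}^t-r_{23}^t\cdot'_m r_{12}^{t+s-m-1}+r_{13}^{t+s-m-1}\cdot'^{\mathrm{op}}_{m} r_{23}^s)(g\o f)$ is established by exactly the three term-by-term computations you describe (using $\<a_mf,b\>=\<f,a\cdot'_mb\>$ and $\<f(m)a,b\>=\<f,b\cdot'^{\mathrm{op}}_m a\>$), and the equivalence then follows from finite-dimensionality of the graded piece $U(s+t-m-1)\o U(s)\o U(t)$ exactly as you argue. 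Note only that your convention $f\in U(t)^\ast$, $g\in U(s)^\ast$ matches the paper's proof and the displayed tensor component, whereas the corollary's statement swaps $s$ and $t$ in naming the domains of $f$ and $g$; this appears to be a typo in the statement rather than an issue with your argument.
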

}

When $U'\cong U$ as a $U$-module, fix a $U$-module isomorphism $\varphi: U\ra U'$. Then by  in \cite[Prop.~5.3.6]{FHL}, there exists a non-degenerate symmetric invariant bilinear form $(\cdot\,|\,\cdot):U\times U\ra \C$ such that
\begin{equation}\mlabel{2.33}
\<\varphi(a),b\>=(a|b),\quad a,b\in U.
\end{equation}
Moreover, since $\varphi(Y_U(a,z)b)=Y_{U'}(a,z)\varphi(b)$ and $\varphi L(-1)=L(-1)\varphi$, if $T:U'\ra U$ is a relative RBO, then for $a,b\in U$, we have
\begin{align*}
Y_U((T\varphi)(a),z)(T\varphi)(b)&=T(Y_{U'}((T\varphi)(a),z)\varphi(b))+T(Y_{U'U}^{U'}(\varphi(a),z)(T\varphi)(b))\\
&=T\varphi(Y_U((T\varphi)(a),z)b)+T\left(e^{zL(-1)}Y_{U'}((T\varphi)(b),-z)\varphi(a)\right)\\
&=(T\varphi) (Y_U((T\varphi)(a),z)b)+ (T\varphi)(Y_U(a,z)(T\varphi)(b)).
\end{align*}
Thus, $T\varphi: U\ra U$ is just an RBO on $U$. Clearly, the converse is also true, and by taking $\Res_{z} z^m$, we find that $T: U'\ra U$ is an $m$-relative RBO if and only if $T\varphi: U\ra U$ is an $m$-RBO.
Moreover, for $T_r:U'\ra U$ given by \meqref{3.23b}, denote $T_r\varphi$ by $\tilde{T_r}$. Then by \meqref{2.33} we have
\begin{equation}\mlabel{2.34}
\widetilde{T_r}(a)=\sum_{i=1}^{p_t}\al^t_i (a\,|\,\b^t_i)-\b^t_i(a\,|\,\al^t_i),\quad a\in U(t), t\in \N.
\end{equation}
Hence we have the following conclusion.
\begin{coro} \mlabel{co:rrb}
	Let $\varphi: U\ra U'$ be an isomorphism as modules over the VOA $U$ and $m\in \Z$. Let $r=\sum_{t=0}^\infty\sum_{i=1}^{p_t} \al^t_i\otimes \b^t_i-\b^t_i\otimes \al^t_i\in \SD(U\mhat{\o} U)$ be a diagonal skewsymmetric element. Then $r$ is a solution to the $m$-VOYBE if and only if $\widetilde{T_r}: U\ra U$ defined by \meqref{2.34} is an $m$-RBO on the VOA $U$.
	\end{coro}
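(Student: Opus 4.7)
The plan is to deduce the corollary directly from Theorem~\mref{thm:main1} by transporting the relative RBO condition along the module isomorphism $\varphi\colon U\to U'$. By Theorem~\mref{thm:main1}, $r$ solves the $m$-VOYBE if and only if $T_r\colon U'\to U$ is a level-preserving $m$-relative RBO of $U$ associated to its contragredient module $U'$, i.e.
\begin{equation*}
T_r(f)_m T_r(g)=T_r\bigl(T_r(f)_m\, g\bigr)+T_r\bigl(f(m)\,T_r(g)\bigr),\quad f,g\in U'.
\end{equation*}
It therefore suffices to show that this identity is equivalent to the $m$-RBO identity for $\widetilde{T_r}=T_r\varphi$ on the adjoint representation of $U$.

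The equivalence is established by substituting $f=\varphi(a),g=\varphi(b)$ with $a,b\in U$ and exploiting the three defining properties of $\varphi$: it intertwines the vertex operators ($\varphi\circ Y_U(a,z)=Y_{U'}(a,z)\circ\varphi$), it commutes with $L(-1)$, and it is bijective. The first two terms $T_r(f)_m T_r(g)$ and $T_r(T_r(f)_m g)$ translate in a straightforward way to $\widetilde{T_r}(a)_m \widetilde{T_r}(b)$ and $\widetilde{T_r}(\widetilde{T_r}(a)_m b)$. The substantive step is the third term: using the skewsymmetry formula \meqref{2.12} that defines the intertwining operator $Y_{U'U}^{U'}$, together with $\varphi L(-1)=L(-1)\varphi$ and the vertex algebra skewsymmetry $e^{zL(-1)}Y_U(c,-z)a=Y_U(a,z)c$, one computes
\begin{equation*}
Y_{U'U}^{U'}(\varphi(a),z)\widetilde{T_r}(b)=e^{zL(-1)}Y_{U'}(\widetilde{T_r}(b),-z)\varphi(a)=\varphi\bigl(Y_U(a,z)\widetilde{T_r}(b)\bigr),
\end{equation*}
so that $T_r(\varphi(a)(m)\widetilde{T_r}(b))=\widetilde{T_r}(a_m\widetilde{T_r}(b))$. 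Combining these identifications turns the relative RBO equation for $T_r$ precisely into the $m$-RBO identity
\begin{equation*}
\widetilde{T_r}(a)_m \widetilde{T_r}(b)=\widetilde{T_r}\bigl(\widetilde{T_r}(a)_m b\bigr)+\widetilde{T_r}\bigl(a_m\widetilde{T_r}(b)\bigr),\quad a,b\in U.
\end{equation*}
Since $\varphi$ is bijective, every $f,g\in U'$ arises this way, so the two identities are equivalent.

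There is essentially no obstacle: the calculation that carries out the equivalence is already present in the paragraph immediately preceding the corollary (performed at the level of the full vertex operator $Y$ rather than a single mode), and the passage to the $m$-th residue is immediate. The proof therefore reduces to citing Theorem~\mref{thm:main1} to handle one direction of the equivalence, and invoking that prior calculation (together with \meqref{2.34} to identify $\widetilde{T_r}$ concretely) to handle the other.
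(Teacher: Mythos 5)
Your proposal is correct and follows essentially the same route as the paper: cite Theorem~\mref{thm:main1} to translate the $m$-VOYBE into the relative RBO condition for $T_r$, then transport that condition along $\varphi$ using the intertwining property, $\varphi L(-1)=L(-1)\varphi$, and the skewsymmetry formula for $Y_{U'U}^{U'}$ --- exactly the computation the paper carries out in the paragraph preceding the corollary before taking $\Res_z z^m$. No gaps.
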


In fact, Theorem \mref{thm:main1} gives rise to a one-to-one correspondence between the solutions to the VOYBE and certain relative RBOs. We end the section by making this precise.
\begin{df}\mlabel{df2.16}
Let $(U,Y_U,\vac,\om)$ be a VOA. We call a level-preserving linear map $T:U'\ra U$ \name{skewsymmetric} if $T$ satisfies
$$\<T(f),g\>=-\<f,T(g)\>, \quad t\in \N, f,g\in U(t)^\ast.$$
Let $ \Hom^{\mathrm{sk}}_{\mathrm{LP}}(U',U)$ denote the subspace of skewsymmetric  level-preserving linear maps.
\end{df}

It is easy to check that $T_r:U'\ra U$ from \meqref{3.23b} is skewsymmetric. We also state the simple fact:
\begin{lm}\mlabel{2.41}
	The linear bijection \meqref{3.24'} restricts to a linear bijection
	\begin{equation}\mlabel{2.40}
\Phi: \SD(U\mhat{\o} U)\cong \Hom^{\mathrm{sk}}_{\mathrm{LP}}(U',U),\quad r\mapsto T_r,
	\end{equation}
	where $T_r$ is defined by \meqref{3.23b}. The inverse of $\Phi$ is given by
	\begin{equation}\mlabel{eq:43}
	\Psi: \Hom^{\mathrm{sk}}_{\mathrm{LP}}(U',U)\ra \SD(U\mhat{\o} U),\quad T\mapsto \frac{1}{2}\sum_{t=0}^\infty\sum_{i=1}^{p_t}\left( T((v_i^t)^\ast)\otimes v^t_i-v^t_i\otimes T((v^t_i)^\ast)\right),
	\end{equation}
	where $\{v^t_1,\dots, v^t_{p_t}\}$ is a basis for $U(t)$, and $\{(v^t_1)^\ast,\dots, (v^t_{p_t})^\ast\}$ is the dual basis of $U(t)^\ast$ for $t\in \N$.
	\end{lm}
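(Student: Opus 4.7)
The plan is to establish three things separately: first, that $\Phi$ sends $\SD(U\mhat{\o}U)$ into $\Hom^{\mathrm{sk}}_{\mathrm{LP}}(U',U)$; second, that the formula defining $\Psi$ lands in $\SD(U\mhat{\o}U)$; and third, that $\Phi$ and $\Psi$ are mutually inverse. Since the unrestricted bijection $\Phi: D(U\mhat{\o}U) \cong \Hom_{\mathrm{LP}}(U',U)$ is already provided by \meqref{3.24'}, the restriction claim will follow once the first two items above are confirmed.

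For the first item, given $r = \sum_{t,i}(\al_i^t\o\b_i^t-\b_i^t\o\al_i^t)\in \SD(U\mhat{\o}U)$ and $f,g\in U(t)^\ast$, I would compute $\<T_r(f),g\>$ and $\<f,T_r(g)\>$ directly from the defining formula \meqref{3.23b}: each evaluates to a sum of the form $\sum_i \<f,\b_i^t\>\<g,\al_i^t\> - \<f,\al_i^t\>\<g,\b_i^t\>$, with the opposite sign between the two, so they cancel. For the second item, the explicit form of $\Psi(T)$ shows it is visibly of the type $\gamma - \si(\gamma)$ with $\gamma = \frac{1}{2}\sum_{t,i}T((v_i^t)^\ast)\o v_i^t \in D(U\mhat{\o}U)$, which is skewsymmetric by Definition \mref{df3.5}(iii).

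For the inverse identities, the key tool is the dual basis resolution $\sum_i \<(v_i^t)^\ast,b\>v_i^t = b$ for $b\in U(t)$ and its dual $\sum_i \<f,v_i^t\>(v_i^t)^\ast = f$ for $f\in U(t)^\ast$. To check $\Psi\circ\Phi = \mathrm{id}$, I substitute $T_r$ into \meqref{eq:43}; each level-$t$ summand $\sum_i T_r((v_i^t)^\ast)\o v_i^t$ reduces via the first resolution identity to $\sum_i(\al_i^t\o\b_i^t - \b_i^t\o\al_i^t) = r^t$, and the other piece gives $-r^t$, so the factor of $1/2$ returns $r^t$. To check $\Phi\circ\Psi = \mathrm{id}$, I substitute $r = \Psi(T)$ into \meqref{3.23b} and, for $f\in U(t)^\ast$, obtain
\[
T_{\Psi(T)}(f) = \frac{1}{2}\sum_i\bigl(T((v_i^t)^\ast)\<f,v_i^t\> - v_i^t\<f,T((v_i^t)^\ast)\>\bigr).
\]
The first sum collapses to $T(f)$ via the second resolution identity. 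For the second sum, the skewsymmetry hypothesis gives $\<f,T((v_i^t)^\ast)\> = -\<T(f),(v_i^t)^\ast\>$, so the same resolution identity collapses it to $-T(f)$, and the two contributions combine to $T(f)$.

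There is no genuine obstacle here; the proof is purely bookkeeping with dual bases, and the only subtlety worth flagging is that the skewsymmetry of $T$ (not merely its level-preserving property) is precisely what is needed for the second half of $\Phi\Psi = \mathrm{id}$, which explains why the restricted bijection is natural.
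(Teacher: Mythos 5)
Your proof is correct and complete: the paper itself states this lemma without proof (as a ``simple fact''), and your direct dual-basis computation --- checking that $\Phi$ lands in $\Hom^{\mathrm{sk}}_{\mathrm{LP}}(U',U)$, that $\Psi(T)=\gamma-\si(\gamma)$ lies in $\SD(U\mhat{\o}U)$, and that the two resolution identities give $\Psi\circ\Phi=\mathrm{id}$ and $\Phi\circ\Psi=\mathrm{id}$ --- is exactly the intended argument. The one point you rightly isolate, that the skewsymmetry $\<T(f),g\>=-\<f,T(g)\>$ is what makes the second half of $\Phi\circ\Psi=\mathrm{id}$ close up, is indeed the only nontrivial step.
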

Then by Theorem \mref{thm:main1}, we obtain
\begin{coro}\mlabel{co:2.42}
For $m\in \Z$, let $\SD_{\mathrm{sol}}(U\mhat{\o} U)(m)$ denote the set of skewsymmetric solutions to the $m$-VOYBE, and let $ \mathrm{RBO}_{\mathrm{LP}}^{\mathrm{sk}}(U',U)(m)$ denote the set of level-preserving skewsymmetric $m$-relative RBOs.
Then $\Phi$ in \meqref{2.40} restricts to a bijection \begin{equation}\mlabel{2.42}
\Phi: \SD_{\mathrm{sol}}(U\mhat{\o} U)(m)\leftrightarrow \mathrm{RBO}_{\mathrm{LP}}^{\mathrm{sk}}(U',U)(m),\quad r\mapsto T_r.
\end{equation}
\end{coro}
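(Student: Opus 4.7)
The proof plan is essentially a direct assembly of two earlier results: Lemma \mref{2.41} and Theorem \mref{thm:main1}. The map $\Phi$ in \meqref{2.40} is already known, by Lemma \mref{2.41}, to be a linear bijection between $\SD(U\mhat{\o}U)$ and $\Hom^{\mathrm{sk}}_{\mathrm{LP}}(U',U)$, with explicit inverse $\Psi$ given by \meqref{eq:43}. So the task reduces to showing that under this bijection, the subset of skewsymmetric solutions to the $m$-VOYBE on the left corresponds to the subset of skewsymmetric $m$-relative RBOs on the right.

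For the forward direction, I would take $r\in \SD_{\mathrm{sol}}(U\mhat{\o}U)(m)$. By Lemma \mref{2.41}, $T_r=\Phi(r)\in \Hom^{\mathrm{sk}}_{\mathrm{LP}}(U',U)$; so it is automatically skewsymmetric and level-preserving. Theorem \mref{thm:main1} then asserts that the $m$-VOYBE identity for $r$ is equivalent to the $m$-relative RBO identity \meqref{3.24} for $T_r$, which therefore holds. Hence $T_r\in \mathrm{RBO}_{\mathrm{LP}}^{\mathrm{sk}}(U',U)(m)$, showing $\Phi$ sends the left set into the right.

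For the reverse direction, take $T\in \mathrm{RBO}_{\mathrm{LP}}^{\mathrm{sk}}(U',U)(m)$ and set $r:=\Psi(T)\in \SD(U\mhat{\o}U)$. Since $\Phi\circ\Psi=\Id$ by Lemma \mref{2.41}, we have $T_r=T$, so the $m$-relative RBO identity holds for $T_r$. Applying Theorem \mref{thm:main1} in the other direction, $r$ satisfies the $m$-VOYBE, i.e.\ $r\in \SD_{\mathrm{sol}}(U\mhat{\o}U)(m)$. Since $\Phi(r)=T$, the map $\Phi$ is surjective onto the right set, and injectivity is inherited from Lemma \mref{2.41}.

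There is essentially no main obstacle: the only thing to verify, beyond a one-line invocation of each of the two prior results, is that the skewsymmetry condition on $r$ and the skewsymmetry condition on $T$ are matched by $\Phi$, which is precisely the content of Lemma \mref{2.41}. Thus the corollary follows immediately, and the proof can be written in just a few lines.
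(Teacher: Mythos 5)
Your proposal is correct and follows exactly the route the paper takes: the paper derives this corollary directly from Lemma \mref{2.41} (the linear bijection $\Phi$ with inverse $\Psi$) together with Theorem \mref{thm:main1} (the equivalence of the $m$-VOYBE for $r$ with the $m$-relative RBO identity for $T_r$). Your two-directional unpacking is just a slightly more explicit write-up of the same one-line argument.
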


\section{Solving the vertex operator Yang-Baxter equation by relative RBOs}
\mlabel{s:ofvoybe} Let $(U,Y_U,\vac,\om)$ be a VOA. Theorem \mref{thm:main1} indicates that we can construct an $m$-relative RBO $T_r: U'\ra U$ from a skewsymmetric solution of the $m$-VOYBE in the VOA $U$. In the other direction, for a skewsymmetric level-preserving $m$-relative RBO $T: U'\ra U$, by the correspondence \meqref{2.42}, we can also construct a solution $\Psi(T)$ of the $m$-VOYBE in the VOA $U$, where $\Psi$ is given by \meqref{eq:43}.
Now we show that, from certain not necessarily skewsymmetric $m$-relative RBOs for a VOA associated to arbitrary admissible modules, we can still construct solutions to the $m$-VOYBE in a larger VOA, similar to the case of Lie algebras~\mcite{B2}.

\subsection{Actions on the contragredient modules}
\mlabel{ss:3.1}
Let $(V,Y,\vac,\om)$ be a VOA and $(W,Y_W)$ be an ordinary $V$-module of conformal weight $\la\in \Q$. Under the assumption of $\la=0$, we solve the VOYBE from some special relative RBOs $T:W\ra V$.

With the assumption, the contragredient module $(W',Y_{W'})$ is an ordinary $V$-module of the same conformal weight $0$, and the semidirect product $(V\rtimes W',Y_{V\rtimes W'},\vac)$ as recalled in Section \mref{ss:voybe} is a vertex operator algebra, where $Y_{V\rtimes W'}$ is given by \meqref{2.11}. We can write it as
\begin{equation}\mlabel{3.28}
Y_{V\rtimes W'}(a+f,z)(b+g)=(Y_V(a,z)b)+(Y_{W'}(a,z)g+Y_{W'V}^{W'}(f,z)b),\quad a,b\in V, f,g\in W',
\end{equation}
where $Y_{W'}$ is defined by the adjoint formula \meqref{3.9b}, and $Y_{W'V}^{W'}$ is defined by the skewsymmetry formula \meqref{2.12} with respect to $Y_{W'}$:
\begin{equation}\mlabel{3.44'}
Y_{W'V}^{W'}(f,z)a=e^{zL(-1)} Y_{W'}(a,-z)f.
\end{equation}

 We denote the VOA $(V\rtimes W',Y_{V\rtimes W'},\vac,\om)$ by $(U,Y_U,\vac,\om)$. Since the conformal weight of $W'$ is $0$, the admissible gradation $W'=\bigoplus_{n=0}^\infty W(n)^\ast$ is the same as the $L(0)$-eigenspace gradation, and the gradation on $U$ is given by
 $$U(n):=V_n\op W(n)^\ast,\quad n\in \N.$$
Then $U=\bigoplus_{n=0}^\infty U(n)$.
Observe that for a given level $n\in \N$, the homogeneous space $U(n)\o U(n)$ can be decomposed as
\begin{equation}\mlabel{3.45}
	U(n)\o U(n)=(V_n\o V_n)\oplus (V_n\o W(n)^\ast)\oplus(W(n)^\ast \o V_n)\oplus (W(n)^\ast\o W(n)^\ast).
\end{equation}
Then we have $V_n\o W(n)^\ast \subset U(n)\o U(n)$.

Recall from Definition \mref{df3.1} that a linear map $T:W\ra V$ is called level-preserving if it satisfies $T(W(n))\ssq V_n$ for $n\in \N$. We let $\Hom_{\mathrm{LP}}(W,V)$ denote the space of level-preserving linear maps from $W$ to $V$. For $n\in \N$, let $\{v^{n}_1,v^n_2,\dots,v^n_{p_n}\}$ be a basis of $W(n)$, and let $\{(v^{n}_1)^{\ast},(v^n_2)^\ast,\dots,(v^n_{p_n})^\ast\}$ be the dual basis of $W(n)^{\ast}$. Similar to \meqref{3.22}, there is a natural isomorphism of vector spaces
$$
\Hom_{\mathrm{LP}}(W,V)=\prod_{t=0}^\infty \Hom(W(t),V_t)\cong \prod_{t=0}^{\infty} V_{t}\otimes W(t)^{\ast}\subset \prod_{t=0}^\infty U(t)\mhat{\o}U(t)=D(U\mhat{\o}U),
$$
where $T\in \Hom_{\mathrm{LP}}(W,V)$ corresponds to $\sum_{t=0}^{\infty}\sum_{i=1}^{p_t} T(v_i^t)\otimes (v_i^t)^\ast.$

From now on, we fix a level-preserving linear map $T:W\ra V$.
By Eq.~\meqref{3.45}, we have $T=\sum_{t=0}^{\infty}\sum_{i=1}^{p_t} T(v_i^t)\otimes (v_i^t)^\ast\in V_t\o W(t)^\ast$. Hence we have
 $\sigma(T)=\sum_{t=0}^{\infty}\sum_{i=1}^{p_t} (v_i^t)^\ast\o T(v_i^t)\in W(t)^\ast\o V_t $. Therefore we take the skewsymmetrization
\begin{equation}\mlabel{3.13}
r:=r_T:=T-\sigma(T)= \sum_{t=0}^{\infty}\sum_{i=1}^{p_t} T(v_i^t)\otimes (v_i^t)^\ast- (v_i^t)^\ast\otimes T(v_i^t)\in \SD((V\rtimes W')\mhat{\o} (V\rtimes W')).
\end{equation}

To determine the conditions on $T$ so that $r_T$ is a solution of the $m$-VOYBE, we need the following preparations.
\begin{lm}\mlabel{lm3.11}
	With the setting as above, for homogeneous $a\in V$, we have the equalities
	\begin{align}
	\sum_{i} T(v_i^t)\otimes a'_m (v_i^t)^\ast&=\sum_j T(a_m v_j^{t+m+1-\wt (a)})\otimes (v_j^{t+m+1-\wt (a)})^{\ast}, \mlabel{3.30} \\
	\sum_{i} a'^{\mathrm{op}}_m (v_i^t)^\ast\otimes T(v^t_i)&=\sum_j (v_j^{t+m+1-\wt (a)})^\ast\otimes T((v_j^{t+m+1-\wt (a)})(m)a), \mlabel{3.31}\\
	\sum_{k} a_m (v^s_k)^\ast \o T(v^s_k)&=\sum_{j} (v_j^{\wt (a)-m-1+s})^\ast \o T(a'_m v^{\wt (a)-m-1+s}_j), \mlabel{3.32}\\
	\sum_{k} (v^s_k)^\ast (m) a\o T(v^s_k)&=\sum_{j} (v_j^{\wt (a)-m-1+s})^\ast \o T(a'^{\mathrm{op}}_{m} v_j^{\wt (a)-m-1+s}), \quad t,s\in \N. \mlabel{3.33}
	\end{align}
\end{lm}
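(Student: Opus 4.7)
The four identities in Lemma \mref{lm3.11} are all proved by the same strategy: pair the $W'$-valued tensor factor against a test element $w\in W$ via the canonical pairing $\<\cdot,\cdot\>\colon W'\times W\to \C$, convert the action on $W'$ into an adjoint action on $W$, and then use the dual-basis identity
$$\sum_{j} v_j^n\<(v_j^n)^\ast, w\>=w, \quad w\in W(n),$$
together with the level-preserving assumption on $T$ to collapse both sides to $T$ applied to the same element of $W$.

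The first step will be to extend the $m$-dot operations $\cdot'_m$ and $\cdot'^{\mathrm{op}}_m$ from Definition \mref{df2.3} to the module $W$ via the same formulas with $Y_W$ in place of $Y_U$, and then to derive the four adjointness relations
\begin{align*}
	\<a_m f, v\> &= \<f, a'_m v\>, & \<a'_m f, v\> &= \<f, a_m v\>,\\
	\<a'^{\mathrm{op}}_m f, v\> &= \<f, v(m)a\>, & \<f(m)a, v\> &= \<f, a'^{\mathrm{op}}_m v\>,
\end{align*}
valid for homogeneous $a\in V$, $f\in W'$, and $v\in W$. These follow by expanding $Y_{W'}$ via the contragredient formula \meqref{3.9}, expanding $Y_{W'V}^{W'}$ via the skewsymmetry formula \meqref{3.44'}, using $\<L(\pm 1)f, v\>=\<f, L(\mp 1)v\>$ from \meqref{2.7}, and invoking the involutive simplification $e^{\pm z^{-1}L(1)}(-z^{2})^{L(0)}\cdot e^{\pm zL(1)}(-z^{-2})^{L(0)}=\Id$ (with matching signs), which can be checked by direct computation on homogeneous vectors and expresses the involutive nature of the contragredient module construction.

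Once these adjointness formulas are in hand, each identity follows by a uniform bookkeeping argument. For \meqref{3.30}, I would pair the second tensor factor of both sides against $w\in W$: the second adjointness formula converts $\<a'_m(v_i^t)^\ast, w\>$ to $\<(v_i^t)^\ast, a_m w\>$, which vanishes unless $a_m w\in W(t)$, i.e.\ unless $w\in W(t+m+1-\wt(a))$; the dual-basis identity then evaluates the LHS to $T(a_m w)$, matching the RHS after the same pairing. The identities \meqref{3.31}, \meqref{3.32}, \meqref{3.33} are handled by the same recipe, using respectively the third, first, and fourth adjointness relation, with common values $T(w(m)a)$, $T(a'_m w)$, and $T(a'^{\mathrm{op}}_m w)$ on the support weight $W(t+m+1-\wt(a))$ or $W(\wt(a)-m-1+s)$ as dictated by weight-counting via (an extension of) Remark \mref{rk3.4} to the module $W$.

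The main technical obstacle lies in the derivation of the third and fourth adjointness relations, since $a'^{\mathrm{op}}_m$ and $f(m)$ each produce two layers of conjugation---one of the state by $e^{\pm zL(1)}(-z^{-2})^{L(0)}$ and one of the argument by $e^{\pm zL(-1)}$ or $e^{\pm zL(1)}$---and the sign bookkeeping needed to see that the composition of these conjugations collapses to the identity on homogeneous vectors is the only delicate point. Once this is settled, the remaining verifications are formal.
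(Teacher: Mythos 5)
Your proposal is correct and follows essentially the same route as the paper: the paper likewise establishes the adjointness relations $\<a'_m f,v\>=\<f,a_m v\>$, $\<a'^{\mathrm{op}}_m f,v\>=\<f,v(m)a\>$, etc., by expanding $Y_{W'}$ and $Y_{W'V}^{W'}$ through the contragredient and skewsymmetry formulas and the involutive property of $e^{zL(1)}(-z^{-2})^{L(0)}$, then uses the weight count to place each $W'$-valued entry in a single graded piece and collapses with the dual-basis identity. Your reformulation via pairing against a test vector $w\in W$ is an immaterial repackaging of the paper's direct dual-basis expansion, and your identification of the sign bookkeeping in the two-layer conjugations as the only delicate point matches where the paper's computation does its real work.
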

\begin{proof}
Note that $a'_m (v_i^t)^\ast\in W(t+m+1-\wt (a))^{\ast}$. Then by \meqref{3.51}, \meqref{3.52} and Definition \mref{df2.3}, we have
\begin{align*}
&\sum_{i} T(v_i^t)\otimes a'_m (v_i^t)^\ast\\
&=  \sum_{i} \sum_jT(v_i^t)\otimes \<a'_m (v_i^t)^\ast,v_j^{t+m+1-\wt (a)}\> (v_j^{t+m+1-\wt (a)})^\ast\\
&=\sum_{i} \sum_j T(v_{i}^t)\otimes\Res_{z}z^m \<Y_{W'}(e^{zL(1)}(-z^{-2})^{L(0)}a,z^{-1})(v_i^t)^\ast, v_j^{t+m+1-\wt (a)}\>(v_j^{t+m+1-\wt (a)})^\ast\\
&=\sum_{i} \sum_jT(v_i^t)\otimes \Res_{z} z^m\<(v_i^t)^\ast, Y_{W}(a,z)v_j^{t+m+1-\wt (a)}\>(v_j^{t+m+1-\wt (a)})^\ast\\
&=\sum_i \sum_jT(\<(v_i^t)^\ast,a_m v_j^{t+m+1-\wt (a)}\>v_i^t)\otimes (v_j^{t+m+1-\wt (a)})^\ast\\
&=\sum_j T(a_m v_j^{t+m+1-\wt (a)})\otimes (v_j^{t+m+1-\wt (a)})^\ast.
\end{align*}
This proves \meqref{3.30}.  Likewise, $a'^{\mathrm{op}}_m (v_i^t)^\ast\in W(t+m+1-\wt (a))^{\ast}$ gives
\begin{align*}
&   \sum_{i} a'^{\mathrm{op}}_m (v_i^t)^\ast\otimes T(v^t_i)\\
	&=\sum_i\sum_j \<a'^{\mathrm{op}}_m (v_i^t)^\ast, v_j^{t+m+1-\wt (a)}\> (v_j^{t+m+1-\wt (a)})^\ast\otimes T(v^t_i)\\
	 &=\sum_{i,j}\Res_{z}z^m\<Y_{W'}(e^{-zL(1)}(-z^{-2})^{L(0)}a,-z^{-1})e^{zL(1)}(v_i^t)^\ast,  v_j^{t+m+1-\wt (a)}\> (v_j^{t+m+1-\wt (a)})^\ast\otimes T(v^t_i)\\
	&=\sum_{i,j}\Res_{z}z^m \<(v_i^t)^\ast,e^{zL(-1)}Y_W(a,-z)v_j^{t+m+1-\wt (a)}\>(v_j^{t+m+1-\wt (a)})^\ast\otimes T(v^t_i)\\
	&=\sum_{i,j} \Res_{z}z^{m} \<(v_i^t)^\ast, Y_{WV}^{W}(v_j^{t+m+1-\wt (a)},z)a\>(v_j^{t+m+1-\wt (a)})^\ast\otimes T(v^t_i)\\
	&=\sum_{i,j}  (v_j^{t+m+1-\wt (a)})^\ast\otimes T(\<(v_i^t)^\ast, (v_j^{t+m+1-\wt (a)})(m)a\>v^t_i)\\
	&=\sum_j (v_j^{t+m+1-\wt (a)})^\ast\otimes T((v_j^{t+m+1-\wt (a)})(m)a).
	\end{align*}
	This proves \meqref{3.31}. The proofs of \meqref{3.32} and \meqref{3.33} are similar, just observing that $a_m(v^s_k)^\ast$ and $(v^s_k)^\ast(m)a$ are both contained in $W(\wt (a)-m-1+s)^\ast$.
\end{proof}

\begin{lm}\mlabel{lm3.10}
		With the setting as above, for any homogeneous $a\in V$, we have
		\begin{equation}\mlabel{3.34}
		\begin{aligned}
	&\sum_{l} ((v^q_l)^\ast)'(m)a\o T(v^q_l)\\
	=&\sum_{j}\Res_{z} z^{-m-2}(-1)^{m+1}  (v_j^{\wt (a)+m+1-q})^\ast \o  T\left( Y_{WV}^W(e^{z^{-1}L(1)}(-z^2)^{L(0)}v_j^{\wt (a)+m+1-q},z)e^{-z^{-1}L(1)}a\right),
		\end{aligned}
		\end{equation}
where $((v^q_l)^\ast)'(m)a=\Res_{z} z^m Y_{W'V}^{W'}(e^{zL(1)}(-z^{-2})^{L(0)}(v^q_l)^\ast,z^{-1})a$, and
\begin{equation}\mlabel{3.35}
		\begin{aligned}
	&	\sum_{l} T(v^q_l)\o ((v^q_l)^\ast)'^{\mathrm{op}}(m) a\\
=&\sum_{j} \Res_{z} z^{-m-2}(-1)^{m+1}(v_j^{\wt (a)+m+1-q})^\ast\o  T\left( Y_{WV}^W(e^{-z^{-1}L(1)}(-z^2)^{L(0)}v_j^{\wt (a)+m+1-q},-z)a\right),
		\end{aligned}
		\end{equation}
where $((v^q_l)^\ast)'^{\mathrm{op}}(m) a=\Res_{z}z^m Y_{W'V}^{W'}(e^{-zL(1)}(-z^{-2})^{L(0)}(v^q_l)^\ast, -z^{-1})e^{zL(1)}a$.
	\end{lm}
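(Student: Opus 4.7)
The argument parallels that of Lemma \mref{lm3.11}, but is more intricate since it requires pushing the intertwining operator $Y_{W'V}^{W'}$ on the $W'$ side back to the intertwining operator $Y_{WV}^W$ on the $W$ side. The plan is to pair both sides of \meqref{3.34} and \meqref{3.35} with a basis of $W$, use the skewsymmetry formula \meqref{3.44'} and the contragredient adjoint formula \meqref{3.9} to transfer the action of $Y_{W'V}^{W'}$ across the pairing, and then reassemble the result as $Y_{WV}^W$ via \meqref{2.12}. The sums over $l$ collapse as in the proof of Lemma \mref{lm3.11} via the duality $\sum_l v_l^q \langle (v_l^q)^\ast,\cdot\rangle=\mathrm{Id}_{W(q)}$ and the linearity of $T$.

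For \meqref{3.34}, note that $((v_l^q)^\ast)'(m)a$ lies in $W(n)^\ast$ with $n=\wt(a)+m+1-q$, so it is determined by its pairing with each basis vector $v_j^n$. After applying \meqref{3.44'} and transferring operators across the pairing via \meqref{2.7} and \meqref{3.9}, this pairing takes the form
$$
\Res_z z^m \bigl\langle (v_l^q)^\ast,\ (-z^{-2})^{L(0)} e^{zL(-1)} Y_W\bigl(e^{-z^{-1}L(1)}(-z^2)^{L(0)}a,\ -z\bigr) e^{z^{-1}L(1)} v_j^n\bigr\rangle.
$$
Recognize $e^{zL(-1)}Y_W(\cdot,-z)\cdot$ as $Y_{WV}^W$ via \meqref{2.12}, and pull $(-z^{-2})^{L(0)}$ inside using the $L(0)$-conjugation rule for intertwining operators $(-z^{-2})^{L(0)} Y_{WV}^W(u,z)(-z^{-2})^{-L(0)}=Y_{WV}^W((-z^{-2})^{L(0)}u,-z^{-1})$. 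Applied to the $a$-factor this yields $(-z^{-2})^{L(0)} e^{-z^{-1}L(1)}(-z^2)^{L(0)}a=e^{zL(1)}a$, a direct consequence of $[L(0),L(1)]=-L(1)$ on weight eigenspaces. The substitution $z\mapsto -z^{-1}$ under the residue then converts $\Res_z z^m$ into $\Res_z z^{-m-2}(-1)^{m+1}$, turns $-z^{-1}$ into $z$, and rewrites $(-z^{-2})^{L(0)}e^{z^{-1}L(1)}v_j^n$ as $e^{z^{-1}L(1)}(-z^2)^{L(0)}v_j^n$ (again by the same commutation identity) while $e^{zL(1)}a$ becomes $e^{-z^{-1}L(1)}a$. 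Summing over $l$ and applying $T$ in the second tensor slot of \meqref{3.34} produces the right-hand side.

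Equation \meqref{3.35} is proved by the same procedure, starting from
$$((v_l^q)^\ast)'^{\mathrm{op}}(m)a=\Res_z z^m Y_{W'V}^{W'}\bigl(e^{-zL(1)}(-z^{-2})^{L(0)}(v_l^q)^\ast,\ -z^{-1}\bigr) e^{zL(1)}a.$$
The sign flip in the formal variable and the extra $e^{zL(1)}$ acting on $a$ propagate through the same identities; the latter precisely cancels the $e^{-z^{-1}L(1)}$ factor that would otherwise sit on $a$, so $a$ appears without any exponential factor on the right-hand side of \meqref{3.35}, in agreement with the statement.

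The main technical obstacle is the bookkeeping: each application of \meqref{2.7}, \meqref{3.9}, or the $L(0)$-conjugation of $Y_{WV}^W$ introduces or transforms operators of the form $e^{\pm z^{\pm 1}L(\pm 1)}$ and $(-z^{\pm 2})^{L(0)}$, which must be coordinated with the change of formal variable $z\mapsto -z^{-1}$ inside the residue. The operator identities on weight eigenspaces coming from $[L(0),L(\pm 1)]=\mp L(\pm 1)$, most importantly $(-z^{-2})^{L(0)}e^{-z^{-1}L(1)}(-z^2)^{L(0)}=e^{zL(1)}$, collapse the many exponential factors into the clean compositions $e^{\pm z^{-1}L(1)}(-z^2)^{L(0)}$ appearing on the right-hand sides of \meqref{3.34} and \meqref{3.35}.
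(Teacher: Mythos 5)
Your proposal is correct and follows essentially the same route as the paper: expand $((v_l^q)^\ast)'(m)a$ (resp.\ its op-version) in the dual basis of $W(\wt(a)+m+1-q)^\ast$, pair with $v_j^{\wt(a)+m+1-q}$, push everything through the skewsymmetry formula \meqref{3.44'} and the adjoint formula \meqref{3.9} to land on $Y_{WV}^W$, and collapse the $l$-sum via $\sum_l\langle(v_l^q)^\ast,w\rangle T(v_l^q)=T(w)$. The only cosmetic difference is in the final bookkeeping: the paper evaluates $(-z^{\pm2})^{L(0)}$ as explicit scalars $(-1)^q z^{\mp 2q}$, $(-1)^{\wt(a)}z^{\pm 2\wt(a)}$ on homogeneous vectors and matches powers of $z$ directly, whereas you use $L(0)$-conjugation of the intertwining operator together with the substitution $z\mapsto -z^{-1}$ in the residue; both are valid and yield the same identities.
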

\begin{proof}
We prove \meqref{3.34} first. Since the conformal weights of $W$ and $W$ are $0$, we have
	\begin{align*}(-z^{2})^{L(0)} (v_j^{\wt (a)+m+1-q})&=(-1)^{\wt (a)+m+1-q} z^{2\wt (a)+2m+2-2q} v^{\wt (a)+m+1-q}_j,\\
	(-z^{-2})^{L(0)}(v^q_l)^\ast&=(-1)^{q} z^{-2q}(v^q_l)^\ast.
	\end{align*}
Moreover, $((v^q_l)^\ast)'(m)a\in W(\wt (a)+m+1-q)^\ast$ for $q\geq 0$. Then we have
{\small	\begin{align*}
	&\sum_{l} ((v^q_l)^\ast)'(m)a\o T(v^q_l)=\sum_{j,l}\<((v^q_l)^\ast)'(m)a, v_j^{m+1-q+\wt (a)}\> (v_j^{m+1-q+\wt (a)})^\ast \o T(v^q_l)\\
	&=\sum_{j,l}\Res_{z} z^m (v_j^{\wt (a)+m+1-q})^\ast\o \<Y_{W'V}^{W'}(e^{zL(1)}(-z^{-2})^{L(0)}(v^q_l)^\ast,z^{-1})a,v_j^{\wt (a)+m+1-q}\> T(v^q_l)\\
	&=\sum_{j,l}\Res_{z} z^m(v_j^{\wt (a)+m+1-q})^\ast\o \<e^{z^{-1}L(-1)}Y_{W'}(a,-z^{-1})e^{zL(1)}(-z^{-2})^{L(0)}(v^q_l)^\ast, v_j^{\wt (a)+m+1-q}\> T(v^q_l)\\
	&=\sum_{j,l}\Res_{z} z^m (v_j^{\wt (a)+m+1-q})^\ast\o \<(-z^{-2})^{L(0)}(v^q_l)^\ast,   e^{zL(-1)}Y_W(e^{-z^{-1}L(1)}(-z^{2})^{L(0)}a,-z)e^{z^{-1}L(1)}v_j^{\wt (a)+m+1-q} \> T(v^q_l)\\
	&=\sum_{j,l}\Res_{z} z^m(v_j^{\wt (a)+m+1-q})^\ast \o\< (-1)^{q} z^{-2q}(v^q_l)^\ast, Y_{WV}^W(e^{z^{-1}L(1)}v^{\wt (a)+m+1-q}_{j},z) e^{-z^{-1}L(1)}(-1)^{\wt (a)} z^{2\wt (a)}a \> T(v^q_l)\\
	&=\sum_{j,l} \Res_{z} z^{-m-2} (-1)^{m+1}(v_j^{\wt (a)+m+1-q})^\ast \o \<(v^q_l)^\ast,  Y_{WV}^W(e^{z^{-1}L(1)}(-z^2)^{L(0)}v_j^{\wt (a)+m+1-q},z) e^{-z^{-1}L(1)}a\>T(v^q_l)\\
	&=\sum_{j}\Res_{z} z^{-m-2}(-1)^{m+1}  (v_j^{\wt (a)+m+1-q})^\ast \o  T\left( Y_{WV}^W(e^{z^{-1}L(1)}(-z^2)^{L(0)}v_j^{\wt (a)+m+1-q},z)e^{-z^{-1}L(1)}a\right).
	\end{align*}
}
This proves \meqref{3.34}. The proof of \meqref{3.35} is similar.
\end{proof}

\subsection{Strong relative RBOs and solutions of VOYEB in semidirect products}
The level-preserving linear map $T:W\ra V$ has the coadjoint map
\begin{equation}\mlabel{3.36}
T^\ast: V'\ra W',\quad \<T^\ast(f),u\>:=\<f,T(u)\>,
\end{equation}
for any homogeneous $f\in V'$ and $u\in W$. Clearly $T^\ast$ is also a level-preserving map between the contragredient modules $W'$ and $V'$.

We define an intertwining operator $\Y_{WW'}^{V'}$ (see Section 5.4 in \mcite{FHL}) as follows. For $u\in W$, $u^\ast\in W'$, and $a\in V$, let
\begin{equation}\mlabel{3.37}
\<\Y_{WW'}^{V'}(u,z)u^\ast,a\>:=\<u^\ast,Y_{WV}^W(e^{zL(1)}(-z^{-2})^{L(0)}u,z^{-1})a\>.
\end{equation}
 Then we define another intertwining operator $\Y_{W'W}^{V'}$ by the skewsymmetry formula
\begin{equation}\mlabel{3.38}
\Y_{W'W}^{V'}(u^\ast,z)u:=e^{zL(-1)} \Y_{WW'}^{V'}(u,-z)u^\ast.
\end{equation}
Observe that $\Y_{WW'}^{V'}$ and $\Y_{W'W}^{V'}$ are unique up to a scalar if $W$ is an irreducible $V$-module.

We let $Y_{VW'}^{W'}=Y_{W'}$ and $Y_{VV'}^{V'}=Y_{V'}$ be the contragredient module operators with respect to $W'$ and $V'$, respectively. Recall that they are defined by the similar adjoint formulas with respect to $Y_{W}$ and $Y_V$, respectively (see (5.5.4) in \mcite{FHL}). Then we define $Y_{W'V}^{W'}$ by \meqref{3.44'}, and $Y_{V'V}^{V'}$ by the skewsymmetry formula \meqref{2.12} with respect to $Y_{V'}$:
\begin{equation}
Y_{V'V}^{V'}(f,z)a=e^{zL(-1)}  Y_{V'}(a,-z)f,\quad a\in V,f\in V'.
\end{equation}
With the notations as above, we give the following notions.

\begin{df}\mlabel{df3.11}
	Let $(V,Y,\vac,\om)$ be a VOA, and $(W,Y_W)$ be an ordinary $V$-module of conformal weight $\la=0$. Let $T\in \Hom_{\mathrm{LP}}(W,V)$, and let $T^\ast:V'\ra W'$ be given by \meqref{3.36}.
	\begin{enumerate}
\item Let $m\in \Z$. $T$ is called an {\bf $m$-strong relative Rota-Baxter operator} if $T$ is a level-preserving relative RBO, and $T$ and $T^\ast$ satisfy the following compatibility axioms for any $u\in W$ and $f\in V'$.
		\begin{align}
	&	\Res_{z} z^m\left(Y_{W'}(T(u),z)T^\ast(f)-T^\ast(Y_{VV'}^{V'}(T(u),z)f)+T^\ast(\Y_{WW'}^{V'}(u,z)T^\ast(f)) \right)=0, \mlabel{3.39}\\
	& \Res_{z} z^m \left(Y_{W'V}^{W'}(T^\ast(f),z)T(u)-T^\ast(Y_{V'V}^{V'}(f,z)T(u))+T^\ast(\Y_{W'W}^{V'}(T^\ast(f),z)u)\right)=0.\mlabel{3.40}
		\end{align}
	
\item $T$ is called a \name{strong relative Rota-Baxter operator} if $T$ is an $m$-strong relative Rota-Baxter operator for all $m\in \Z$.
		\end{enumerate}
	\end{df}

\begin{remark}
	 In Definition \mref{df3.11}, if $T^\ast:V'\ra W'$ is commutative with the operator $L(-1)$, then \meqref{3.40} follows from \meqref{3.39} since $Y_{W'V}^{W'}$, $Y_{V'V}^{V'}$, and $Y_{W'W}^{V'}$ are defined by the skewsymmetry formulas.
	\end{remark}

The following theorem shows that a strong relative RBO gives rise to a solution to the VOYBE in the semidirect product VOA.

\begin{thm}\mlabel{thm:main2}
Let $V$ be a VOA, $W$ be an ordinary $V$-module of conformal weight $0$, $U=V\rtimes W'$ be the semidirect product VOA, $T\in \Hom_{\mathrm{LP}}(W,V)$ be a level-preserving linear operator, and $r=r_T$ be $T-\sigma(T)\in U^{\mhat{\o}2}$ as in \meqref{3.13}.
Define $r_{12}$, $r_{13}$, and $r_{23}$ as in \meqref{3.46},\meqref{3.47} and \meqref{3.48}$:$
\begin{align*}
r_{12}:&=\sum_{t=0}^{\infty}\sum_{i=1}^{p_t} T(v^t_i)\otimes (v^t_i)^\ast \otimes I - (v_i^t)^\ast \otimes T(v_i^t)\otimes I,\\
r_{13}:&=\sum_{s=0}^{\infty}\sum_{k=1}^{p_s} T(v^s_k)\otimes I\otimes (v^s_k)^\ast- (v_k^s)^\ast \otimes I \otimes T(v_k^s),\\
r_{23}:&=\sum_{q=0}^{\infty}\sum_{l=1}^{p_r} I\otimes T(v^q_l)\otimes (v^q_l)^\ast - I \otimes  (v_l^q)^\ast \otimes T(v_l^q).
\end{align*}
Let $m\in \Z$. Then $r$ satisfies the $m$-VOYBE \meqref{3.22} in the vertex algebra $U=V\rtimes W'$ if and only if $T:W\ra V$ is an $m$-strong relative RBO.
In particular, $r$ is a solution to the VOYBE in the vertex algebra $U$ if and only if $T:W\ra V$ is a strong relative RBO.
\end{thm}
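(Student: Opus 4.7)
The plan is to apply Theorem~\mref{thm:main1} to the skewsymmetric element $r=r_T\in\SD(U\mhat{\o}U)$ and then decompose the resulting $m$-relative RBO identity along $U'=V'\op W$. First I would observe that $r_T$ is diagonal and skewsymmetric by construction, and identify the associated linear map $T_{r_T}:U'\to U$ from \meqref{3.23b}. Using $U=V\op W'$, $U'=V'\op W$, and the fact that the only nonzero graded-dual pairings are the $V$-$V'$ and $W$-$W'$ ones, a short calculation yields $T_{r_T}(f+u)=T(u)-T^\ast(f)$ for $f\in V'$, $u\in W$; equivalently, $T_{r_T}|_W=T$ and $T_{r_T}|_{V'}=-T^\ast$. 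By Theorem~\mref{thm:main1}, the $m$-VOYBE for $r_T$ is then equivalent to the $m$-relative RBO identity
\begin{equation*}
T_{r_T}(\phi)_m T_{r_T}(\psi)=T_{r_T}(T_{r_T}(\phi)_m\psi)+T_{r_T}(\phi(m)T_{r_T}(\psi))
\end{equation*}
for all $\phi,\psi\in U'$, and by bilinearity this splits into four cases indexed by $\phi,\psi\in V'\cup W$.

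Next I would describe the $U$-module structure on $U'$ explicitly. Using the semidirect product formula \meqref{3.28}, the adjoint formula \meqref{3.9}, and the skewsymmetry formula \meqref{3.44'}, one finds that $V\subset U$ acts on $V'\subset U'$ by $Y_{VV'}^{V'}$ and on $W\subset U'$ by $Y_W$, while $W'\subset U$ annihilates $V'\subset U'$ and sends $W\subset U'$ into $V'$ via the intertwining operator $\Y_{W'W}^{V'}$ from \meqref{3.38}. A parallel computation identifies the intertwining operator $Y_{U'U}^{U'}$ governing $\phi(m)$ in \meqref{3.7} in terms of $Y_{WV}^W$, $Y_{V'V}^{V'}$, and $\Y_{WW'}^{V'}$.

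With these identifications in hand, I would analyze the four cases. For $\phi=u,\psi=w\in W$, both sides lie in $V$ and the identity reduces directly to $T(u)_m T(w)=T(T(u)_m w)+T(u(m)T(w))$, which is the $m$-relative RBO axiom \meqref{3.8} for $T$. For $\phi,\psi\in V'$, both sides vanish automatically: the left because $W'$ is an abelian ideal of $U$, and the right because $W'$ acts trivially on $V'\subset U'$. The mixed case $\phi=u\in W,\psi=g\in V'$ produces three terms whose sum, after sign bookkeeping, reassembles into the compatibility axiom \meqref{3.39}; the symmetric case $\phi=f\in V',\psi=w\in W$ yields \meqref{3.40}. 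Collecting the four cases, the $m$-VOYBE for $r_T$ is equivalent to $T$ being a level-preserving $m$-relative RBO that also satisfies \meqref{3.39} and \meqref{3.40}, i.e.\ an $m$-strong relative RBO; letting $m$ range over $\Z$ yields the ``in particular'' statement.

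The main obstacle is the identification of the component pieces of $Y_{U'}$ and $Y_{U'U}^{U'}$ on the summands of $U'=V'\op W$ as the intertwining operators $\Y_{WW'}^{V'}$ and $\Y_{W'W}^{V'}$. This requires careful bookkeeping with the exponential substitutions $e^{zL(\pm 1)}$ and $(-z^{-2})^{L(0)}$ that appear in the adjoint and skewsymmetry formulas. Lemmas~\mref{lm3.11} and~\mref{lm3.10} are tailored precisely to these manipulations and, together with the dual-basis expansion of $T$, will handle essentially all the resulting bookkeeping in the mixed cases.
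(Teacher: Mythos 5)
Your proposal is correct, and it reaches the theorem by a genuinely different (more modular) route than the paper. The paper proves Theorem~\mref{thm:main2} by brute force: it expands $r_{12}\cdot_m r_{13}$, $-r_{23}\cdot'_m r_{12}$ and $r_{13}\cdot'^{\mathrm{op}}_m r_{23}$ into nine terms $(A1)$--$(C3)$, regroups them into three sums lying in the mutually complementary subspaces $\prod V_{s+t-m-1}\o W(t)^\ast\o W(s)^\ast$, $\prod W(t)^\ast\o V_{t+q-m-1}\o W(q)^\ast$ and $\prod W(s)^\ast\o W(q)^\ast\o V_{q+s-m-1}$ of $U^{\mhat{\o}3}$, and shows directly that the vanishing of these three sums is equivalent to the $m$-relative RBO axiom \meqref{3.8}, to \meqref{3.39}, and to \meqref{3.40} respectively. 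You instead treat Theorem~\mref{thm:main1} as a black box applied to the VOA $U=V\rtimes W'$ itself: since $r_T$ is diagonal and skewsymmetric, the $m$-VOYBE for $r_T$ is equivalent to $T_{r_T}=T\oplus(-T^\ast)$ being an $m$-relative RBO $U'\to U$, and the four-fold decomposition of that single identity along $U'\cong W\op V'$ produces exactly the three nontrivial conditions (your $(W,W)$, $(W,V')$, $(V',W)$ cases) plus one vacuous case (the $(V',V')$ case, trivial because $W'$ is a square-zero ideal of $U$ acting trivially on $V'\subset U'$). This makes the structure of the answer transparent --- it explains \emph{why} a strong relative RBO is one RBO axiom plus two compatibility axioms --- and it reuses the already-proven equivalence rather than redoing the tensor computation. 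What it does not save is the analytic bookkeeping: to translate the four cases into \meqref{3.8}, \meqref{3.39} and \meqref{3.40} you must identify the contragredient module $(V\rtimes W')'\cong V'\op W$ (which tacitly uses $W''\cong W$) and match the block components of $Y_{U'}$ and $Y_{U'U}^{U'}$ with $Y_{VV'}^{V'}$, $Y_W$, $Y_{WV}^W$, $\Y_{WW'}^{V'}$ and $\Y_{W'W}^{V'}$; these are precisely the $e^{zL(\pm1)}(-z^{-2})^{L(0)}$ manipulations carried out in Lemmas~\mref{lm3.11} and~\mref{lm3.10} and inside the paper's proof, so the total computational content is comparable. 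One point to make explicit when writing this up: the signs coming from $T_{r_T}|_{V'}=-T^\ast$ must be tracked through both mixed cases so that the three terms reassemble with the sign pattern $+,-,+$ of \meqref{3.39} and \meqref{3.40}; they do, but this is where an error would most easily creep in.
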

\begin{proof}
By Definition \mref{df2.3}, \meqref{3.28}, the equalities \meqref{3.32} and \meqref{3.33} in Lemma \mref{lm3.11}, and the fact that $(v^t_i)^\ast\cdot_m (v^s_k)^\ast=0$ in $U$ for all $t,s,i,k$, we have
	\begin{align*}
	&r_{12}\cdot_m r_{13}\\
	&= \sum_{s,t\geq 0} \sum_{i,k} \big(T(v^s_k)_m T(v^t_i)\otimes (v^t_i)^\ast\otimes (v^s_k)^\ast- T(v^s_k)_m (v^t_i)^\ast\otimes T(v^t_i)\otimes (v^s_k)^\ast\\
	&\quad -(v^s_k)^\ast(m)T(v^t_i)\otimes (v^t_i)^\ast\otimes T(v^s_k)\big)\\
	&=\underbrace{\sum_{s,t\geq 0}\sum_{i,k}  {T(v^s_k)_m T(v^t_i)\otimes (v^t_i)^\ast\otimes (v^s_k)^\ast}}_{(A1)}
	\underbrace{-\sum_{s,t\geq 0} \sum_{k,j} {(v_j^{t-m-1+s})^\ast\otimes T(T(v^s_k)'_mv_j^{t-m-1+s})\otimes (v^s_k)^\ast}}_{(A2)}\\
	&\quad \underbrace{- \sum_{s,t\geq 0} \sum_{i,j} {(v_j^{s-m-1+t})^\ast\otimes (v_i^t)^\ast \otimes T(T(v^t_i)'^{\mathrm{op}}_mv_j^{s-m-1+t})}}_{(A3)}\\
	&=(A1)+(A2)+(A3).
	\end{align*}
	By Definition \mref{df2.3}, \meqref{3.34} in Lemma \mref{lm3.10}, \meqref{3.30} in Lemma \mref{lm3.11}, and the fact that $((v_l^q))^\ast \cdot'_m (v^t_i)^\ast =0$ in $U$ for $q,t,i,l$, we have
\begin{align*}
&-r_{23}\cdot'_mr_{12}\\
	&=\sum_{q,t\geq 0} \sum_{i,l}\big(-T(v^t_i)\otimes T(v^q_l)'_m(v^t_i)^\ast  \otimes (v^q_l)^\ast +(v^t_i)^\ast \otimes T(v^q_l)'_m T(v^t_i) \otimes (v^q_l)^\ast\\
	&\ \ - (v^t_i)^\ast\otimes ((v^q_l)^\ast)'(m) T(v^t_i) \otimes T(v^q_l)\big)\\
	&=\underbrace{-\sum_{q,t\geq 0} \sum_{j,l} {T(T(v^q_l)_m v^{t+m+1-q}_j) \otimes (v^{t+m+1-q}_j)^\ast\otimes (v^q_l)^\ast}}_{(B1)} +\underbrace{\sum_{q,t\geq 0} \sum_{i,l} {(v^t_i)^\ast \otimes T(v^q_l)'_m T(v^t_i) \otimes (v^q_l)^\ast}}_{(B2)}\\
	&\ \ \underbrace{- \sum_{q,t\geq 0} \sum_{i,j} {\Res_{z}(-1)^{m+1} z^{-m-2} (v^t_i)^\ast\otimes (v^{t+m+1-q}_j)^\ast \o  T\left(Y_{WV}^W(e^{z^{-1}L(1)}(-z^2)^{L(0)}v_j^{t+m+1-q},z)e^{-z^{-1}L(1)}T(v^t_i)\right)}\bigg)}_{(B3)}\\&= (B1)+(B2)+(B3).
	\end{align*}
	Finally, by Definition \mref{df2.3}, \meqref{3.35} in Lemma \mref{lm3.10}, \meqref{3.31} in Lemma \meqref{lm3.11}, and the fact that $(v^s_k)^\ast\cdot'^{\mathrm{op}}_m (v^q_l)^\ast=0$ for $q,s,k,l$, we have
	\begin{align*}
	&r_{13}\cdot'^{\mathrm{op}}_m r_{23}\\
	&= \sum_{q,s\geq 0} \sum_{k,l}\big(-T(v^s_k)\otimes (v^q_l)^\ast\otimes T(v^q_l)'^{\mathrm{op}}_m(v^s_k)^\ast -(v^s_k)^\ast\otimes T(v^q_l)\otimes ((v^q_l)^\ast)'^{\mathrm{op}}(m) T(v^s_k) \\
	&\quad +(v^s_k)^\ast\otimes (v^q_l)^\ast\otimes T(v^q_l)'^{\mathrm{op}}_m T(v^s_k)\big)\\
	&= \underbrace{-\sum_{q,s\geq 0} \sum_{j,l} {T((v^{s+m+1-q}_j)(m)T(v^q_l))\otimes (v^q_l)^\ast \otimes (v^{s+m+1-q}_j)^\ast}}_{(C1)} \\	
	&\quad \underbrace{-\sum_{q,s\geq 0} \sum_{k,j} \bigg(\Res_{z} (-1)^{m+1}z^{-m-2}(v^s_k)^\ast\o {T\left(Y_{WV}^W(e^{-z^{-1}L(1)}(-z^2)^{L(0)}v_j^{s+m+1-q},-z)T(v^s_k)\right)\otimes (v^{s+m+1-q}_j)^\ast}\bigg)}_{(C2)}\\
	&\quad +\underbrace{\sum_{q,s\geq 0} \sum_{k,l} {(v^s_k)^\ast\otimes (v^q_l)^\ast\otimes T(v^q_l)'^{\mathrm{op}}_m T(v^s_k)}}_{(C3)}\\
	&=(C1)+(C2)+(C3).
	\end{align*}
	Then we have
$$r_{12}\cdot_m r_{13}-r_{23}\cdot'_m r_{12}+ r_{13}\cdot'^{\mathrm{op}}_mr_{23}=(A1)+(A2)+(A3) +(B1)+(B2)+(B3)+(C1)+(C2)+(C3).$$
Note that
	\begin{align*}
	&(A1)+(B1)+(C1)\\
	&=\sum_{s,t\geq 0} \sum_{k,i} T(v^s_k)_m T(v^t_i)\otimes (v^t_i)^\ast\otimes (v^s_k)^\ast
- 	\sum_{q,t\geq 0} \sum_{l,j} T(T(v^q_l)_m v^{t+m+1-q}_j) \otimes (v^{t+m+1-q}_j)^\ast\otimes (v^q_l)^\ast \\
	&\ \ \  -\sum_{q,s\geq 0}\sum_{j,l} T((v^{s+m+1-q}_j)(m)T(v^q_l))\otimes (v^q_l)^\ast \otimes (v^{s+m+1-q}_j)^\ast.
	\end{align*}
Changing the variables $(q,t+m+1-q)\mapsto (s,t)$ in $(B1)$ and $(s+m+1-q,q)\mapsto (s,t)$ in $(C1)$
as in the proof of Lemma \mref{lm3.5}, and observing that $T(v^s_k)_m T(v^t_i)\in V_{s-m-1+t}=0$ for $s,k$ if $s+t<m+1$, we obtain
\begin{align*}
&(A1)+(B1)+(C1)\\
&=\sum_{s,t\geq 0,s+t\geq m+1} \sum_{k,i} T(v^s_k)_m T(v^t_i)\otimes (v^t_i)^\ast\otimes (v^s_k)^\ast  -\sum_{s,t\geq 0, s+t\geq m+1} \sum_{k,i} T(T(v^s_k)_m v^t_i)\otimes (v^t_i)^\ast\otimes (v^s_k)^\ast\\
&\quad  -\sum_{s,t\geq 0,s+t\geq m+1} \sum_{k,i} T((v^s_k)(m)T(v^t_i))\otimes (v^t_i)^\ast\otimes (v^s_k)^\ast\\
&=\sum_{s,t\geq 0,s+t\geq m+1} \sum_{k,i} \left(T(v^s_k)_m T(v^t_i)- T(T(v^s_k)_m v^t_i)-T((v^s_k)(m)T(v^t_i))\right)\otimes (v^t_i)^\ast\otimes (v^s_k)^\ast,
\end{align*}
which is contained in the subspace $ \prod_{s,t\geq 0,s+t\geq m+1} V_{s+t-m-1}\otimes W(t)^\ast\otimes W(s)^\ast\subset U^{\mhat{\o}3}$. Since $(v^t_i)^\ast$ and $(v^s_k)^\ast$ are bases of $W(t)^\ast$ and $W(s)^\ast$ respectively, we have
$$(A1)+(B1)+(C1)=0 \iff T(v^s_k)_m T(v^t_i)- T(T(v^s_k)_m v^t_i)-T((v^s_k)(m)T(v^t_i))=0,$$
for $s,t\in \N,$ and $ 1\leq k\leq p_s,1\leq i\leq p_t$. Moreover, since $(v^t_i)$ is a basis of $W(t)$ for each $t\geq 0$, it follows that
\begin{equation}\mlabel{3.41}
(A1)+(B1)+(C1)=0 \iff  T(u)_m T(v)=T(T(u)_mv)+T(u(m)T(v)), \quad u, v\in W,
\end{equation}
that is, $T:W\ra V$ is an $m$-relative RBO.
	
We perform a similar analysis for the other terms in $r_{12}\cdot_m r_{13}-r_{23}\cdot'_m r_{12}+ r_{13}\cdot'^{\mathrm{op}}_mr_{23}$. Note that
	\begin{align*}
&	(A2)+(B2)+(C2)\\
&=-\sum_{s,t\geq 0}\sum_{k,j} (v_j^{t-m-1+s})^\ast\otimes T(T(v^s_k)'_mv_j^{t-m-1+s})\otimes (v^s_k)^\ast+\sum_{q,t\geq 0} \sum_{i,l} (v^t_i)^\ast \otimes T(v^q_l)'_m T(v^t_i) \otimes (v^q_l)^\ast\\
	&\ \ \ -\sum_{q,s\geq 0} \sum_{k,j} \Res_{z} \left((-1)^{m+1}z^{-m-2}(v^s_k)^\ast\o  T\left(Y_{WV}^W(e^{-z^{-1}L(1)}(-z^2)^{L(0)}v_j^{s+m+1-q},-z)T(v^s_k)\right)\otimes (v^{s+m+1-q}_j)^\ast\right).
	\end{align*}
Changing the variables $(t-m-1+s,s)\mapsto (t,q)$ in $(A2)$ and $(s,s+m+1-q)\mapsto (t,q)$ in $(C2)$, and observing that $T(v^q_l)'_m T(v^t_i)\in V_{t+m+1-q}=0$ if $q-t>m+1$, we obtain
	\begin{align*}
	&(A2)+(B2)+(C2)\\
	&=-\sum_{q,t\geq 0,r-t\leq m+1} \sum_{l,i} (v_i^{t})^\ast\otimes T(T(v^q_l)'_mv^t_i)\otimes (v^q_l)^\ast + \sum_{t,r\geq 0,q-t\leq m+1} \sum_{l,i} (v^t_i)^\ast \otimes T(v^q_l)'_m T(v^t_i)\otimes (v^q_l)^\ast\\
	&\ \ \ -\sum_{q,t\geq 0,q-t\leq m+1} \sum_{l,i} \left(\Res_{z} (-1)^{m+1}z^{-m-2}(v^t_i)^\ast\o T\left(Y_{WV}^W(e^{-z^{-1}L(1)}(-z^2)^{L(0)}v_l^{q},-z)T(v^t_i)\right)\otimes (v^{q}_l)^\ast\right),
	\end{align*}
which is contained in the subspace $ \prod_{t,q\geq 0,q-t\leq m+1} W(t)^\ast\otimes V_{t+q-m-1}\otimes W(q)^\ast\subset U^{\mhat{\o}3}$. Then
$(A2)+(B2)+(C2)=0$ if and only if $$-T(T(v^q_l)'_mv^t_i)+T(v^q_l)'_m T(v^t_i)-\Res_{z} (-1)^{m+1} z^{-m-2}\big( T(Y_{WV}^W(e^{-z^{-1}L(1)}(-z^2)^{L(0)}v_l^{r},-z)T(v^t_i))\big)=0$$
in $V_{t+q-m-1}$, for $q,t\in \N$ and $1\leq l\leq p_q$, $1\leq i\leq p_t$.
	
Let $f\in V_{t+m+1-r}^\ast\subset V'$. We apply $\<-,f\>$ to the terms on the left hand side.  Then by \meqref{3.51}, \meqref{3.36} and \meqref{3.37}, we have
\begin{align*}
\<T(T(v^q_l)'_m v^t_i),f\>&=\<T(v^q_l)'_m v^t_i, T^\ast(f)\>=\Res_{z} z^m \<Y_W(e^{zL(1)}(-z^{-2})^{L(0)}T(v^q_l),z^{-1})v^t_i, T^\ast(f)\>\\
&=\Res_{z} z^m \<v^t_i, Y_{W'}(T(v^q_l),z)T^\ast(f) \>,\\
\<T(v^q_l)'_m T(v^t_i), f\>&=\<Y_V(e^{zL(1)}(-z^{-2})^{L(0)}T(v^q_l),z^{-1})T(v^t_i),f\>=\<T(v^t_i),Y_{VV'}^{V'}(T(v^q_l),z)f\>\\
&=\<v^t_i, T^\ast(Y_{VV'}^{V'}(T(v^q_l),z)f)\>.
\end{align*}
We write $\Y_{WW'}^{V'}(v,z)=\sum_{n\in \Z} v[n]z^{-n-1}$. Then by \meqref{3.37} we have
\begin{align*}
&\Res_{z} (-1)^{m+1} z^{-m-2} \< T(Y_{WV}^W(e^{-z^{-1}L(1)}(-z^2)^{L(0)}v_l^q,-z)T(v^t_i)),f\>\\
&=\Res_{z} (-1)^{m+1} z^{-m-2} \<T(v^t_i), \Y_{WW'}^{V'}(v^q_l,-z^{-1})T^\ast(f)\>\\
&=\Res_{z} (-1)^{m+1} z^{-m-2} \left\<T(v^t_i),\sum_{n\in \Z}v^q_l[n] T^\ast(f)  \right\>(-1)^{n+1} z^{n+1}=\<T(v^t_i), v^q_l[m]T^\ast(f)\>\\
&=\Res_{z} z^m \<v_i^t,T^\ast(\Y_{WW'}^{V'}(v^q_l,z)T^\ast(f))\>.
\end{align*}
Since an element $\al\in V_{t+q-m-1}$ is $0$ if and only if $\<\al,f\>=0$ for all $f\in V_{t+q-m-1}^\ast$, we have
\begin{align*}
&(A2)+(B2)+(C2)=0 \iff\\
&	\Res_{z} z^m\big(\<v^t_i, -Y_{W'}(T(v^q_l),z)T^\ast(f)+T^\ast(Y_{VV'}^{V'}(T(v^q_l),z)f)-T^\ast(\Y_{WW'}^{V'}(v^q_l,z)T^\ast(f)) \>\big)=0,
\end{align*}
	for all $q,t\in \N$, $1\leq l\leq p_q$, $1\leq i\leq p_t$, and $f\in V_{t+q-m-1}^\ast$. But $(v^t_i)$ is a basis of $W(t)$. So we have
	\begin{equation}\label{3.42}
	\begin{aligned}
&	(A2)+(B2)+(C2)=0 \iff \\
&	\Res_{z} z^m\left(Y_{W'}(T(u),z)T^\ast(f)-T^\ast(Y_{VV'}^{V'}(T(u),z)f)+T^\ast(\Y_{WW'}^{V'}(u,z)T^\ast(f)) \right)=0,	u\in W, f\in V',
	\end{aligned}
	\end{equation}
that is, $T$ and $T^\ast$ satisfy \meqref{3.39}.
	
Finally, we examine the sum $(A3)+(B3)+(C3)$. Note that
	\begin{align*}
&(A3)+(B3)+(C3)\\
&=-\sum_{s,t\geq 0}\sum_{i,j} (v_j^{s-m-1+t})^\ast\otimes (v_i^t)^\ast \otimes T(T(v^t_i)'^{\mathrm{op}}_mv_j^{s-m-1+t})\\
&\ \ - \sum_{q,t\geq 0}\sum_{i,j} \left(\Res_{z}(-1)^{m+1} z^{-m-2} (v^t_i)^\ast\otimes (v^{t+m+1-q}_j)^\ast \o  T\left(Y_{WV}^W(e^{z^{-1}L(1)}(-z^2)^{L(0)}v_j^{t+m+1-q},z)e^{-z^{-1}L(1)}T(v^t_i)\right)\right)\\
	&\ \ +\sum_{q,s\geq 0} \sum_{k,l} (v^s_k)^\ast\otimes (v^q_l)^\ast\otimes T(v^q_l)'^{\mathrm{op}}_m T(v^s_k).
	\end{align*}
	Changing the variables $(s-m-1+t,t)\mapsto (s,q)$ in $(A3)$ and $(t,t+m+1-q)\mapsto (s,q)$ in $(C3)$, and observing that $T(v^q_l)'^{\mathrm{op}}_m T(v^s_k)\in V_{s+m+1-q}=0$ if $q-s>m+1$, we have
		\begin{align*}
	&(A3)+(B3)+(C3)\\
	&=-\sum_{q,s\geq 0,q-s\leq m+1}\sum_{k,l} (v_k^{s})^\ast\otimes (v_l^q)^\ast \otimes T(T(v^q_l)'^{\mathrm{op}}_mv_k^{s})\\
	&\ \ - \sum_{q,s\geq 0,q-s\leq m+1}^\infty\sum_{k,l} \left(\Res_{z}(-1)^{m+1} z^{-m-2} (v^s_k)^\ast\otimes (v^q_l)^\ast \o T\left(Y_{WV}^W(e^{z^{-1}L(1)}(-z^2)^{L(0)}v_l^q,z)e^{-z^{-1}L(1)}T(v^s_k)\right)\right)\\
	&\ \ +\sum_{q,s\geq 0,r-s\leq m+1} \sum_{k,l} (v^s_k)^\ast\otimes (v^q_l)^\ast\otimes T(v^q_l)'^{\mathrm{op}}_m T(v^s_k),
	\end{align*}
	which is contained in the subspace $\prod_{q,s\geq 0,q-s\leq m+1} W(s)^\ast\otimes W(q)^\ast\otimes V_{q+s-m-1}\subset U^{\mhat{\o}3}.$ Then
$(A3)+(B3)+(C3)=0$ if and only if $$-T(T(v^q_l)'^{\mathrm{op}}_mv_k^{s})+ T(v^q_l)'^{\mathrm{op}}_m T(v^s_k)-\Res_{z}(-1)^{m+1} z^{-m-2}\big(
T(Y_{WV}^W(e^{z^{-1}L(1)}(-z^2)^{L(0)}v_l^q,z)e^{-z^{-1}L(1)}T(v^s_k)\big)=0$$ in $V_{q+s-m-1}.$
Now let $f\in V^\ast_{q+s-m-1}$, and apply $\<-,f\>$ to the terms on the right hand side. By \meqref{3.52} and \meqref{3.36}, we have
	\begin{align*}
	\<T(T(v^q_l)'^{\mathrm{op}}_mv_k^{s}), f\>&=\<T(v^q_l)'^{\mathrm{op}}_m v_k^s, T^\ast(f)\>=\Res_{z} z^m \<Y_W(e^{-zL(1)}(-z^{-2})^{L(0)}T(v^q_l),-z^{-1})e^{zL(1)}v_k^s, T^\ast(f)\>\\
	&=\Res_{z} z^m \<v^s_k,e^{zL(-1)}Y_{W'}(T(v^q_l),-z)T^\ast(f)\>
	=\Res_{z} z^m \<v^s_k, Y_{W'V}^{W'}(T^\ast(f),z)T(v^q_l)\>.\\
	\<T(v^q_l)'^{\mathrm{op}}_m T(v^s_k),f\>&=\Res_{z} z^m \<Y_V(e^{-zL(1)}(-z^{-2})^{L(0)}T(v^q_l),-z^{-1})e^{zL(1)}T(v^s_k), f\>\\
	&=\Res_{z} z^m \<v^s_k, T^\ast(e^{zL(-1)}Y_{VV'}^{V'}(T(v^q_l),-z)f)\>
	=\Res_{z} z^m\<v^s_k, T^\ast(Y_{V'V}^{V'}(f,z)T(v^q_l))\>.
	\end{align*}
Write $\Y_{W'W}^{V'}(\al, z)=\sum_{n\in \Z} \al\{n\}z^{-n-1} $. Then by \meqref{3.37} and \meqref{3.38},  we have
\begin{align*}
&\Res_{z}(-1)^{m+1} z^{-m-2}\<T(Y_{WV}^W(e^{z^{-1}L(1)}(-z^2)^{L(0)}v_l^{r},z)e^{-z^{-1}L(1)}T(v^s_k),f\>\\
&=\Res_{z} (-1)^{m+1}z^{-m-2} \<e^{-z^{-1}L(1)}T(v^s_k), \Y_{WW'}^{V'}(v^q_l, z^{-1})T^\ast(f)\>\\
&=\Res_{z} (-1)^{m+1} z^{-m-2} \<v^s_k, T^\ast( \Y_{W'W}^{V'}(T^\ast(f),-z^{-1}) v^q_l)\>=\<v^s_k, T^\ast(T^\ast(f)\{m\}v^q_l)\>\\
&=\Res_{z} z^m \<v^s_k, T^\ast(\Y_{W'W}^{V'}(T^\ast(f),z)v^q_l)\>.
\end{align*}
It follows that $(A3)+(B3)+(C3)=0$ if and only if 
$$\Res_{z}z^m\<v^s_k, -Y_{W'V}^{W'}(T^\ast(f),z)T(v^q_l)+T^\ast(Y_{V'V}^{V'}(f,z)T(v^q_l))-T^\ast(\Y_{W'W}^{V'}(T^\ast(f),z)v^q_l)\>=0,
$$	
for all $q,s\in \N$, $1\leq k\leq p_s$, $1\leq l\leq p_q$, and $f\in V_{q+s-m-1}^\ast$. Then $(v^s_k)$ being a basis of $W(s)^\ast$ yields
\begin{equation}\mlabel{3.43}
	\begin{aligned}
	&(A3)+(B3)+(C3)=0\iff\\
&\Res_{z}z^m\left(Y_{W'V}^{W'}(T^\ast(f),z)T(u))-T^\ast(Y_{V'V}^{V'}(f,z)T(u))+T^\ast(\Y_{W'W}^{V'}(T^\ast(f),z)u )\right)=0,
\end{aligned}
	\end{equation}
for all $f\in V'$ and $u\in W$, that is, $T$ and $T^\ast$ satisfy \meqref{3.40}.

Note that for given $p_1,p_2,p_3\in \N$, the subspaces $V_{p_1}\otimes W(p_1)^\ast\otimes W(p_3)^\ast$, $W(p_1)^\ast\otimes V_{p_2}\otimes W(p_3)^\ast$ and $W(p_1)^\ast\otimes W(p_2)^\ast\otimes V_{p_3}$ are in direct sum within the vector space $U(p_1)\otimes U(p_2)\otimes U(p_3)$. Furthermore, by Definition \mref{df3.5} we have $U^{\mhat{\o}3}=\prod_{p_1,p_2,p_3\geq 0} U(p_1)\o U(p_2)\o U(p_3)$. By our discussion above,
	\begin{align*}
	(A1)+(B1)+(C1)&\in  \prod_{s,t \geq 0,s+t\geq m+1} V_{s+t-m-1}\otimes W(t)^\ast\otimes W(s)^\ast\subset U^{\mhat{\o}3},\\
	(A2)+(B2)+(C2)&\in \prod_{q,t\geq 0,q-t\leq m+1} W(t)^\ast\otimes V_{t+q-m-1}\otimes W(q)^\ast\subset U^{\mhat{\o}3},\\
	(A3)+(B3)+(C3)&\in \prod_{q,s\geq 0,q-s\leq m+1} W(s)^\ast\otimes W(q)^\ast\otimes V_{q+s-m-1}\subset U^{\mhat{\o}3}.
	\end{align*}
Since the three subspaces are in direct sum, 
$r_{12}\cdot_m r_{13}-r_{23}\cdot'_m r_{12}+ r_{13}\cdot'^{\mathrm{op}}_mr_{23}=0$ means
$$(A1)+(B1)+(C1)=(A2)+(B2)+(C2)=(A3)+(B3)+(C3)=0.$$
By \meqref{3.41}, \meqref{3.42} and \meqref{3.43}, we see that $r_{12}\cdot_m r_{13}-r_{23}\cdot'_m r_{12}+ r_{13}\cdot'^{\mathrm{op}}_mr_{23}=0$ if and only if $T:W\ra V$ is an $m$-relative RBO, and $T$ and its coadjoint $T^\ast$ satisfy \meqref{3.39} and \meqref{3.40}. This means that $T:W\ra V$ is a strong $m$-relative RBO. This completes the proof of Theorem~\mref{thm:main2}.
\end{proof}

\begin{remark}
If we want to drop the condition that the conformal weight $\la$ of $W$ is $0$, then $U=V\rtimes W'$ is a $\Q$-graded vertex algebra, and we have to adjust the definitions of $\al \cdot'_m \b$ and $\al\cdot_m'^{\mathrm{op}} \b$ in Definition \mref{df2.3} to accommodate the appearance of the term $z^{-2\la}$ in $Y'_{U}(u^\ast,z)a=Y_{W'V}^{W'}(e^{zL(1)}(-z^{-2})^{L(0)}u^\ast, z^{-1})a$, where $u^\ast\in W'$ and $a\in V$. Although there might still be a way to make things work, this is not our focus of solving the VOYBE. So we make the assumption that $\la=0$ to simplify the discussion.
\mlabel{rk:wt}	
\end{remark}

With notations similar to those in Corollary~\mref{co:2.42}, we obtain an embedding
\begin{equation}\mlabel{3.61}
\mathrm{StrRBO}_{\mathrm{LP}}(W,V)(m)\hookrightarrow \SD_{\mathrm{sol}}((V\rtimes W')\rtimes (V\rtimes W'))(m), \quad m\in \Z.
\end{equation}
Here $\mathrm{StrRBO}_{\mathrm{LP}}(W,V)(m)$ is the set of level-preserving strong $m$-RBO from the $V$-module $W$ to $V$, and the set on the right hand side is the set of skewsymmetric solutions to the $m$-VOYBE in the VOA $V\rtimes W'$.

\subsection{The coadjoint case}

In this subsection, we let $(U,Y_U,\vac,\om)$ be a VOA.
Consider the case when $(W,Y_W)$ is the coadjoint module $(U',Y_{U'})$, and $T$ is a level-preserving map $T:U'\ra U$.

By \meqref{3.36} we have
$$T^\ast: U'\ra (U')'=U,\ \<T^\ast(f),g\>=\<f,T(g)\>,\quad f,g\in U'.$$
In particular, by Definition \mref{df2.16} and \meqref{3.36}, if $T$ is symmetric (resp. skewsymmetric), then we have $T^\ast=T$ (resp. $T^\ast=-T$).
\begin{lm}
The intertwining operator $\Y_{U'U}^{U'}$ given by \meqref{3.37} is the same as the vertex operator $Y_{U'U}^{U'}$ that is given by the skewsymmetry formula with respect to $Y_{U'}$.
	\end{lm}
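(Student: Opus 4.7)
The plan is to verify the equality directly by unpacking both sides via the defining formulas and reducing to a standard consequence of skewsymmetry of $Y_U$ together with the $\mathfrak{sl}_2$-conjugation identities. To set up the comparison, I will pair both operators with an arbitrary test element $a \in U$. For the skewsymmetry side, applying \meqref{2.12} for $Y_{U'U}^{U'}$, the adjoint rules from \meqref{2.7} (with $L(-1)$ on $U'$ dual to $L(1)$ on $U$), and the contragredient formula \meqref{3.9} yields
\begin{equation*}
\<Y_{U'U}^{U'}(f,z)g, a\> = \<f,\, Y_U(e^{-zL(1)}(-z^{-2})^{L(0)}g,\, -z^{-1})\,e^{zL(1)}a\>.
\end{equation*}

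For the intertwining-operator side, I will substitute the skewsymmetry formula for $Y_{U'U}^{U'}$ into \meqref{3.37}, then transport $L(-1)$ across the pairing and apply \meqref{3.9} once more to obtain
\begin{equation*}
\<\Y_{U'U}^{U'}(f,z)g, a\> = \<f,\, (-z^{-2})^{L(0)}e^{zL(-1)}\, Y_U(e^{-z^{-1}L(1)}(-z^2)^{L(0)}a,\,-z)\, e^{z^{-1}L(1)}g\>.
\end{equation*}
Thus the lemma reduces to the vertex-operator identity inside the second argument of the pairing,
\begin{equation*}
(-z^{-2})^{L(0)}e^{zL(-1)} Y_U(e^{-z^{-1}L(1)}(-z^2)^{L(0)}a,\, -z)\,e^{z^{-1}L(1)}g \;=\; Y_U(e^{-zL(1)}(-z^{-2})^{L(0)}g,\,-z^{-1})\,e^{zL(1)}a.
\end{equation*}

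The key step will be to apply the skewsymmetry of $Y_U$ itself to the inner vertex operator on the left; this introduces a compensating $e^{-zL(-1)}$ which cancels against $e^{zL(-1)}$, leaving $(-z^{-2})^{L(0)}\, Y_U(e^{z^{-1}L(1)}g, z)\, e^{-z^{-1}L(1)}(-z^2)^{L(0)}a$. Then I invoke the conjugation formula $x^{L(0)} Y_U(b,w) x^{-L(0)} = Y_U(x^{L(0)}b, xw)$ with $x = -z^{-2}$, together with $x^{L(0)} e^{yL(1)} x^{-L(0)} = e^{x^{-1} y L(1)}$ (which follows from $[L(0),L(1)] = -L(1)$), to push $(-z^{-2})^{L(0)}$ through the vertex operator. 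One checks that $(-z^{-2})^{L(0)}(-z^2)^{L(0)} = \mathrm{id}$ and that the conjugations convert $e^{z^{-1}L(1)}$ and $e^{-z^{-1}L(1)}$ into $e^{-zL(1)}$ and $e^{zL(1)}$ respectively, at which point the right-hand side emerges exactly.

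The only real obstacle is notational bookkeeping: keeping track of the four exponential factors $e^{\pm zL(\pm 1)}$, the two $(-z^{\pm 2})^{L(0)}$ factors, and the sign in $-z \mapsto -z^{-1}$ under substitution. Conceptually the proof is clean because both operators are intertwining operators of type $\binom{U'}{U'\;U}$ and the computation exhibits the explicit change of variables intertwining them; one could alternatively appeal to uniqueness of intertwining operators up to scalar when applicable, but the direct calculation gives the equality on the nose without any irreducibility hypothesis on $U$.
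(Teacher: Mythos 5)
Your proposal is correct and follows essentially the same route as the paper's proof: both sides are paired against a test vector, unfolded via \meqref{3.37}, the skewsymmetry formula and the contragredient formula \meqref{3.9}, and the resulting identity for $Y_U$ is established by invoking the skewsymmetry of $Y_U$ itself. The only difference is in the final step, where the paper verifies the remaining identity by an explicit mode expansion with weights and signs, while you push $(-z^{-2})^{L(0)}$ through the vertex operator using the standard $L(0)$-conjugation formulas --- a slightly cleaner way to finish the same computation.
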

\begin{proof}
Let $a,b\in U$ and $f\in U'$ be homogeneous elements. Then by \meqref{3.37} and \meqref{3.44'} we have
	\begin{align*}
	 &\<\Y_{U'U}^{U'}(f,z)a,b\>=\<a,Y_{U'U}^{U'}(e^{zL(1)}(-z^{-2})^{L(0)}f,z^{-1})b\>\\
	 &=\<e^{z^{-1}L(1)}a,Y_{U'}(b,-z^{-1})e^{zL(1)}(-z^{-2})^{L(0)}f\>\\
	 &=\<e^{zL(-1)}Y_U(e^{-z^{-1}L(1)}(-z^{2})^{L(0)}b,-z)e^{z^{-1}L(1)}a, (-z^{-2})^{L(0)}f\>\\
	&=(-1)^{\wt (f)+\wt (b)}z^{2\wt (b)-2\wt (f)}\<Y_U(e^{z^{-1}L(1)}a,z)e^{-z^{-1}L(1)}b, f\> \\
	&=(-1)^{\wt (f)+\wt (b)}z^{2\wt (b)-2\wt (f)}\sum_{j\geq 0}\sum_{i\geq 0} \frac{z^{\wt (f)-\wt (b)-\wt (a)}}{i!j!}(-1)^j\<(L(1)^ia)_{\wt (a)+\wt (b)-\wt (f)-i-j-1}(L(1)^jb),f\>\\
	&=\sum_{j\geq 0}\sum_{i\geq 0} \frac{z^{-\wt (a)+\wt (b)-\wt (f)}}{i!j!}(-1)^{\wt (f)+j-\wt (b)}\<(L(1)^ia)_{\wt (a)+\wt (b)-\wt (f)-i-j-1}(L(1)^jb),f\>\\
	&=(-1)^{\wt (a)}z^{-2\wt (a)}\<f,Y_U(e^{-zL(1)}a,-z^{-1})e^{zL(1)}b\>\\
	 &=\<f,Y_U(e^{-zL(1)}(-z^{-2})^{L(0)}a,-z^{-1})e^{zL(1)}b\>\\
	 &=\<Y_{U'}(a,-z)f,e^{zL(1)}b\>=\<Y_{U'U}^{U'}(f,z)a,b\>.
	\end{align*}
	Hence $\Y_{U'U}^{U'}(f,z)a=Y_{U'U}^{U'}(f,z)a$ for $f\in U'$ and $a\in U$, and so $\Y_{U'U}^{U'}=Y_{U'U}^{U'}$.
	\end{proof}
It is also easy to check that the rest of the intertwining operators appearing in \meqref{3.39} and \meqref{3.40}, with $V=U$,  satisfy $Y_{W'}=Y_U$, $Y_{UU'}^{U'}=Y_{U'}$, $Y_{W'U}^{W'}=Y_{U}$, and $\Y_{W'W}^{U'}=Y_{U'}$. In particular, if $T$ is skewsymmetric: $T=-T^\ast$, then both \meqref{3.39} and \meqref{3.40} become
$$\Res_{z} z^m\left(-Y_{U}(T(f),z)T(g)+T(Y_{U'}(T(f),z)g)+T(Y_{U'U}^{U'}(f,z)T(g)) \right)=0,$$
which is the condition that $T$ is an $m$-relative RBO. Hence we have the following conclusion.

\begin{lm}
For $m\in \Z$, any level-preserving skewsymmetric $m$-relative RBO $T:U'\ra U$ is strong.
	\end{lm}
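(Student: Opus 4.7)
The approach is to observe that all the technical work has been set up in the two identifications immediately preceding the lemma. Specifically, with $V=U$ and $W=U'$, one has $W'\cong U$ and $V'=U'$, and the preceding lemma together with the remark that follows identifies every intertwining/contragredient operator appearing in conditions \meqref{3.39} and \meqref{3.40} with either $Y_U$ or $Y_{U'}$ or $Y_{U'U}^{U'}$. So the plan is to substitute these identifications into \meqref{3.39} and \meqref{3.40}, apply $T^\ast = -T$ (which holds by Definition~\mref{df2.16} for skewsymmetric $T$), simplify, and check that both conditions collapse to the very same equation that expresses $T$ being an $m$-relative RBO.

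Concretely, first I would substitute into \meqref{3.39}: the three intertwining operators become $Y_{W'}=Y_U$, $Y_{VV'}^{V'}=Y_{U'}$, and $\Y_{WW'}^{V'}=Y_{U'U}^{U'}$ (by the preceding lemma). Pulling the minus signs from $T^\ast = -T$ out of all three terms and multiplying through by $-1$, the displayed residue equals
\[
\Res_{z} z^m\left(-Y_{U}(T(u),z)T(f)+T(Y_{U'}(T(u),z)f)+T(Y_{U'U}^{U'}(u,z)T(f))\right),
\]
which is precisely the negative of the defining equation \meqref{3.9b} of an $m$-relative RBO, tested at $u,f\in U'$. Hence \meqref{3.39} is automatic once $T$ is an $m$-relative RBO. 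Second, I would run the same substitution on \meqref{3.40}: using $Y_{W'V}^{W'}=Y_U$, $Y_{V'V}^{V'}=Y_{U'U}^{U'}$, $\Y_{W'W}^{V'}=Y_{U'}$, and $T^\ast=-T$, the expression again reduces (after pulling minus signs) to the same $m$-relative RBO identity, now tested at $f\in U'$ and $u\in U'$.

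Once both reductions are in place, the conclusion is immediate: for $T$ skewsymmetric and level-preserving, the axioms \meqref{3.39} and \meqref{3.40} of Definition~\mref{df3.11} are nothing more than reformulations of the $m$-relative RBO condition \meqref{3.8}, so that condition alone suffices for $T$ to be $m$-strong.

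The main obstacle, such as it is, is purely bookkeeping: one must confirm that each of the four intertwining operators $Y_{W'}, Y_{VV'}^{V'}, Y_{W'V}^{W'}, Y_{V'V}^{V'}$ really does reduce to the stated operator under $W=U'$, $V=U$. The authors flag this as "easy to check" right after the preceding lemma, and indeed it follows from the observation that $(U')'\cong U$ as a $U$-module and that the skewsymmetry formula \meqref{3.44'} for the contragredient module of $U$ agrees with the skewsymmetry formula already built into $Y_U$. No new computation beyond matching these operators is needed, so the proof reduces to spelling out these substitutions carefully.
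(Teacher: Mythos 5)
Your proposal is correct and follows essentially the same route as the paper: the paper's argument is precisely to use the identification $\Y_{U'U}^{U'}=Y_{U'U}^{U'}$ from the preceding lemma together with the (easily checked) identifications $Y_{W'}=Y_U$, $Y_{UU'}^{U'}=Y_{U'}$, $Y_{W'U}^{W'}=Y_U$, $\Y_{W'W}^{U'}=Y_{U'}$, substitute $T^\ast=-T$, and observe that both \meqref{3.39} and \meqref{3.40} collapse to the $m$-relative RBO identity. Your sign bookkeeping in both reductions matches the paper's displayed equation, so nothing further is needed.
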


Furthermore, given a symmetric linear map $T:U'\ra U$, we note that the skewsymmetrization $r=T-\sigma(T)$ given by \meqref{3.13} is a nonzero element in $(U\rtimes U)^{\mhat{\o}2}$, due to the fact that the tensor form of $T$ and $\sigma(T)$ are in linearly independent subspaces, in view of \meqref{3.45}. Then by \meqref{3.61}, we have an embedding:
\begin{equation}\mlabel{3.62}
\mathrm{RBO}^{\mathrm{sk}}_{\mathrm{LP}}(U',U)(m)\hookrightarrow \SD_{\mathrm{sol}} ((U\rtimes U)\mhat{\o} (U\rtimes U))(m),
\end{equation}
where $\mathrm{RBO}^{\mathrm{sk}}_{\mathrm{LP}}(U',U)(m)$ is the set of skewsymmetric level-preserving $m$-relative RBOs $T:U'\ra U$. Combining \meqref{3.62} with \meqref{2.42}, we have an embedding
\begin{equation}\label{eq:in}
\SD_{\mathrm{sol}}(U\mhat{\o}U)
\big(\cong \mathrm{RBO}^{\mathrm{sk}}_{\mathrm{LP}}(U',U)(m)\big)\hookrightarrow \SD_{\mathrm{sol}} ((U\rtimes U)\mhat{\o} (U\rtimes U))(m),
\end{equation}
which leads to the following conclusion.
\begin{coro}
	Let $(U,Y_U,\vac,\om)$ be a VOA and $m\in \Z$. Every skewsymmetric solution $r$ of the $m$-VOYBE in $U$ is a skewsymmetric solution to the $m$-VOYBE in $U\rtimes U$, via the embedding \eqref{eq:in}.
	\end{coro}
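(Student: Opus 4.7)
The plan is simply to compose the three preceding structural results to trace a skewsymmetric solution of the $m$-VOYBE in $U$ all the way to a skewsymmetric solution in the semidirect product $U\rtimes U$, with the lemma immediately preceding the corollary supplying the only nontrivial link.

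First, starting from $r\in \SD_{\mathrm{sol}}(U\mhat{\o}U)(m)$, I would invoke the bijection of Corollary~\mref{co:2.42} to produce the associated level-preserving skewsymmetric $m$-relative RBO $T_r:U'\to U$ given by \meqref{3.23b}. By the lemma established just above the corollary, every level-preserving skewsymmetric $m$-relative RBO $T_r:U'\to U$ is automatically strong (the intertwining-operator compatibilities \meqref{3.39} and \meqref{3.40} collapse to the relative Rota-Baxter axiom itself in the coadjoint case, because $\Y_{U'U}^{U'}=Y_{U'U}^{U'}$ and $T_r^\ast = -T_r$). Hence $T_r$ is a strong $m$-relative RBO of $U$ associated with its coadjoint module $U'$.

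Next, I would apply Theorem~\mref{thm:main2} with $V=U$, $W=U'$, so that $W'=U$ and the ambient VOA is $V\rtimes W' = U\rtimes U$. The theorem then yields that the skewsymmetrization
\[
r_{T_r}\;:=\;T_r-\sigma(T_r)\;\in\;\SD\big((U\rtimes U)\mhat{\o}(U\rtimes U)\big)
\]
is a skewsymmetric solution to the $m$-VOYBE in $U\rtimes U$, which is exactly what the corollary claims. To conclude that the assignment $r\mapsto r_{T_r}$ is indeed the embedding \eqref{eq:in}, I would verify injectivity: by Lemma~\mref{2.41} the passage $r\leftrightarrow T_r$ is a bijection, and by the decomposition \meqref{3.45} the two summands $T_r$ and $\sigma(T_r)$ lie in the linearly independent subspaces $V_t\otimes W(t)^\ast$ and $W(t)^\ast\otimes V_t$ of $U(t)\otimes U(t)$, so $r_{T_r}\neq 0$ whenever $T_r\neq 0$.

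The only genuine technical content has already been discharged in the preceding lemma and in Theorem~\mref{thm:main2}; what remains for the corollary is bookkeeping of the identifications $W=U'$, $W'=U$, $T^\ast=-T$. The main point to be careful about, and the only place where something could plausibly go wrong, is the identification of the various intertwining operators appearing in Definition~\mref{df3.11} with the standard vertex operators $Y_U$ and $Y_{U'}$ when $W=U'$: this is precisely what the lemma just above the corollary records, which is why it is stated separately and then used here.
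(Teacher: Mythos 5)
Your proposal is correct and follows the same route as the paper: it composes the bijection of Corollary~\ref{co:2.42}, the lemma that skewsymmetric level-preserving $m$-relative RBOs $T:U'\to U$ are automatically strong in the coadjoint case, and Theorem~\ref{thm:main2} with $V=U$, $W=U'$, $W'=U$, and it justifies injectivity exactly as the paper does, via the linear independence of the subspaces $V_t\otimes W(t)^\ast$ and $W(t)^\ast\otimes V_t$ in \eqref{3.45}. Nothing further is needed.
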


\section{Relations with the classical Yang-Baxter equation}
\mlabel{s:cybe}
Theorems \mref{thm:main1} and \mref{thm:main2} are generalizations of the classical results about the relationship between relative RBOs (also known as $\O$-operators) and the CYBE for (finite-dimensional) Lie algebras (cf. \mcite{B2}, see also \mcite{K1,S}).

\subsection{Solutions of CYBE and relative RBOs}

We first observe some facts about the first-level Lie algebra of a VOA and its modules.
Let $V$ be a VOA, $W$ be an admissible $V$-module, and $T\in \Hom_{\mathrm{LP}}(W,V)$ be a level-preserving linear operator.
By the Jacobi identities in Definitions \mref{df:va} and \mref{df2.13}, the first level $V_1$ of the VOA $V$ is a Lie algebra with respect to the Lie bracket
$$[a,b]=a_0b, \quad a,b\in V_1,$$
and $W(1)$ is a module over the Lie algebra $V_1$, with respect to
\begin{equation}\mlabel{3.28b}
\rho: V_1\ra \gl(W(1)),\quad  \rho(a)u=a_0u=\Res_{z} Y_W(a,z)u, \quad a\in V_1, u\in W(1).
\end{equation}

Suppose that $V_1$ consists of quasi-primary vectors, that is, $L(1)V_1=0$ (see \mcite{FHL} as well as \mcite{L1}). Then by \meqref{3.9b}, for $u^\ast\in W(1)^\ast$, $v\in W(1)$ and $a\in V_1$, we have
$$\<a_{0}u^\ast,v\>=\Big\<u^\ast,\sum_{j\geq 0}\frac{(-1)^{\wt (a)}}{j!}(L(1)^ja)_{2\wt (a)-j-2}v\Big\>=\<u^\ast, -a_0v\>=-\<u^\ast, \rho(a)v\>.$$
Therefore, the first level $W(1)^\ast$ of the contragredient $V$-module $W'$ is the dual module of the Lie algebra $V_1$-module $W(1)$:
\begin{equation}\mlabel{3.29}
\rho^\ast: V_1\ra \gl(W(1)^\ast),\quad \rho^\ast(a)u^\ast=a_0u^\ast=\Res_{z} Y'_{W}(a,z)u^\ast,\quad
a\in V_1, u^\ast\in W(1)^\ast.
\end{equation}

Let $(U,Y_U,\vac,\om)$ be a VOA such that $U(1)$ consists of quasi-primary vectors and let $r=\sum_{t=0}^\infty r^t=\sum_{t=0}^\infty\sum_{i=1}^{p_t} \al^t_i\otimes \b^t_i-\b^t_i\otimes \al^t_i$ be a diagonal skewsymmetric two-tensor in $D(U\mhat{\o} U)$. Let
 \begin{equation}\mlabel{4.58}
 R:=r^1=\sum_{i=1}^{p_1} \al^1_i\otimes \b^1_i-\b^1_i\otimes \al^1_i\in U(1)\o U(1).
 \end{equation}
Then $R$ is a skewsymmetric two tensor in the Lie algebra $U(1)$.

\begin{lm}\mlabel{lm4.1}
If $r$ is a skewsymmetric solution to the $0$-VOYBE in $U$, then $R$ is a skewsymmetric solution to the CYBE in the Lie algebra $U(1)$.
 	\end{lm}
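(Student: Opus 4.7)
The plan is to extract the level-$(1,1,1)$ component of the 0-VOYBE in $U^{\mhat{\otimes}3}$ and show that it coincides, up to an overall sign, with the CYBE Schouten bracket $[[R,R]]$ for $R\in U(1)\otimes U(1)$.

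First, I would apply Lemma \mref{lm3.5} with $m=0$, which decomposes the left-hand side of the $0$-VOYBE as a sum $\sum_{s,t\geq 0,\,s+t\geq 1}\alpha_{s,t}$ with
$\alpha_{s,t}\in U(s+t-1)\otimes U(s)\otimes U(t).$
The component lying in $U(1)\otimes U(1)\otimes U(1)$ is precisely $\alpha_{1,1}$, which forces $s=t=1$. Hence the hypothesis gives
\begin{equation*}
r^1_{12}\cdot_0 r^1_{13}-r^1_{23}\cdot'_0 r^1_{12}+r^1_{13}\cdot'^{\mathrm{op}}_0 r^1_{23}=0 \quad \text{in } U(1)^{\otimes 3}.
\end{equation*}

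The next step is to simplify the three $0$-dot products on $U(1)$. Since every element $\alpha\in U(1)$ is quasi-primary ($L(1)\alpha=0$), the infinite sums in Definition \mref{df2.3} collapse to their $i=j=0$ terms. For $\alpha,\beta\in U(1)$ with $\wt(\alpha)=\wt(\beta)=1$, I would compute directly
\begin{align*}
\alpha\cdot_0\beta &=\beta_0\alpha=[\beta,\alpha],\\
\alpha\cdot'_0\beta &=(-1)^{1}\alpha_0\beta=-[\alpha,\beta]=[\beta,\alpha],\\
\alpha\cdot'^{\mathrm{op}}_0\beta &=(-1)^{1+0+0+1}\beta_0\alpha=[\beta,\alpha],
\end{align*}
where $[\,,\,]$ denotes the Lie bracket $[a,b]:=a_0b$ on $U(1)$.

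Writing $R=\sum_a x_a\otimes y_a\in U(1)^{\otimes 2}$ in the skewsymmetric form \meqref{4.58}, I would then substitute these identifications into the three factor-wise products using the definitions \meqref{2.16}-\meqref{2.18}. In each case the multiplication acts on a single tensor slot (slot $1$, $2$, $3$ respectively), and a short expansion yields
\begin{align*}
r^1_{12}\cdot_0 r^1_{13} &=\sum_{a,b}[x_b,x_a]\otimes y_a\otimes y_b=-[R_{12},R_{13}],\\
r^1_{23}\cdot'_0 r^1_{12} &=\sum_{a,b}x_a\otimes[y_a,x_b]\otimes y_b=+[R_{12},R_{23}],\\
r^1_{13}\cdot'^{\mathrm{op}}_0 r^1_{23} &=\sum_{a,b}x_a\otimes x_b\otimes[y_b,y_a]=-[R_{13},R_{23}],
\end{align*}
where the $R_{ij}$ are the standard Lie-algebraic three-tensor insertions of $R$ as in \meqref{eq:1}. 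Adding with the signs of the $0$-VOYBE and flipping overall sign produces
\[
[R_{12},R_{13}]+[R_{12},R_{23}]+[R_{13},R_{23}]=0,
\]
which is the CYBE for $R$. Skewsymmetry of $R$ is immediate from the definition \meqref{4.58}.

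The only real bookkeeping hurdle is tracking the various signs coming from the differing orders of the two arguments in the three products $\cdot_0,\cdot'_0,\cdot'^{\mathrm{op}}_0$ against the conventions for $R_{12},R_{13},R_{23}$; the quasi-primary hypothesis $L(1)U(1)=0$ is what makes the reduction of $\cdot'_0$ and $\cdot'^{\mathrm{op}}_0$ to the Lie bracket clean, so without it the argument would pick up extra terms involving the bottom degree $U(0)$.
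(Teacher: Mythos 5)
Your proposal is correct and takes essentially the same route as the paper's proof: it projects the $0$-VOYBE onto the $U(1)\otimes U(1)\otimes U(1)$ component via Lemma \mref{lm3.5}, uses quasi-primarity of $U(1)$ to collapse the three $0$-dot products to the Lie bracket as in \meqref{4.55}, and identifies the three resulting terms with $-[R_{12},R_{13}]$, $[R_{12},R_{23}]$ and $-[R_{13},R_{23}]$ exactly as the paper does (the paper writes these as $[R_{13},R_{12}]$, $-[R_{23},R_{12}]$ and $[R_{23},R_{13}]$, which agree with your signs).
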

 \begin{proof}
 	For any $\al,\b\in U(1)$, since $L(1)\al=L(1)\b=0$, by Definition \meqref{df2.3}, we have
 	\begin{equation}\mlabel{4.55}
 	\al\cdot_0 \b=[\b,\al],\quad \al\cdot'_0 \b=(-1)\al_0 \b=[\b,\al],\quad \al\cdot'^{\mathrm{op}}_0 \b=\b_0\al=[\b,\al].
 	\end{equation}
By Lemma \mref{lm3.5}, if $r$ is a solution to the $0$-VOYBE, then taking the projection of $r_{12}\cdot_0 r_{13}-r_{23}\cdot'_0 r_{12}+r_{13}\cdot'^{\mathrm{op}}_0 r_{23}$ onto the homogeneous subspace $U(1)\o U(1)\o U(1)$, we obtain
$$r^1_{12}\cdot_0 r_{13}^1-r_{23}^1\cdot'_0 r_{12}^{1}+r_{13}^{1}\cdot'^{\mathrm{op}}_{0} r_{23}^1=0.$$
By \meqref{4.55} and the definitions in \meqref{2.16}-\meqref{2.18}, we have
$$r^1_{12}\cdot_0 r_{13}^1=[R_{13},R_{12}],\quad -r_{23}^1\cdot'_0 r_{12}^{1}=[R_{23},R_{12}],\quad r_{13}^{1}\cdot'^{\mathrm{op}}_{0} r_{23}^1=[R_{23},R_{13}],$$
where $R_{12},R_{13}$, and $R_{23}$ are elements in the universal enveloping algebra $\mathcal{U}(U(1))$ defined from $R$ in \meqref{4.58} in the conventional way.
Hence we have $[R_{12},R_{13}]+[R_{12},R_{23}]+[R_{13},R_{23}]=0$, which is the CYBE (see \meqref{eq:1}).  	
\end{proof}

Let $T_r:U'\ra U$ be given by \meqref{3.23b}. Consider the restriction map
\begin{equation}\mlabel{4.60}
T_{R}:=T_{r}|_{U(1)^\ast}:U(1)^\ast\ra U(1).
\end{equation}

\begin{lm}\mlabel{lm4.2}
If $T_r: U'\ra U$ is a $0$-relative RBO, then $T_R$ is a relative RBO of the Lie algebra $U(1)$ with respect to the module $U(1)^\ast$.
\end{lm}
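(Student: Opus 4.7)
The plan is to specialize the $0$-relative Rota-Baxter identity \meqref{3.8} satisfied by $T_r: U' \to U$ to inputs $f, g \in U(1)^\ast$, and show that the resulting identity in $U(1)$ is exactly the classical relative Rota-Baxter (i.e., $\mathcal{O}$-operator) condition for $T_R$ as a map from the $U(1)$-module $(U(1)^\ast, \rho^\ast)$ to the Lie algebra $U(1)$.

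Since $T_r$ is level-preserving, both $T_r(f)$ and $T_r(g)$ lie in $U(1)$, so the identity \meqref{3.8} at $m=0$ reads
\[
T_r(f)_0 T_r(g) = T_r(T_r(f)_0 g) + T_r(f(0) T_r(g)).
\]
The left-hand side is the Lie bracket $[T_R(f), T_R(g)]$ in $U(1)$. For the first term on the right, note that $T_r(f)_0 g$ is the zero-mode action of $Y_{U'}$ on $U(1)^\ast$, which by \meqref{3.29} equals $\rho^\ast(T_R(f))g \in U(1)^\ast$; since $T_r$ restricts to $T_R$ on $U(1)^\ast$, this term is $T_R(\rho^\ast(T_R(f))g)$.

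The main step is to evaluate $T_r(f(0) T_r(g))$. Applying the skewsymmetry formula $Y^{U'}_{U'U}(f, z) a = e^{zL(-1)} Y_{U'}(a, -z) f$ and extracting the coefficient of $z^{-1}$, a direct expansion yields
\[
f(0) a = \sum_{k \geq 0} \frac{(-1)^{k+1}}{k!}\, L(-1)^k (a_k f)
\]
for any homogeneous $a \in U$ and $f \in U'$. Substituting $a = T_r(g) \in U(1)$ and $f \in U(1)^\ast$, the degree count $a_k f \in U(1-k)^\ast$ eliminates every term with $k \geq 2$, leaving
\[
f(0) T_r(g) = -T_r(g)_0 f + L(-1)\bigl(T_r(g)_1 f\bigr),
\]
with the first summand in $U(1)^\ast$ and the second in $L(-1)(U(0)^\ast) \subseteq U(1)^\ast$.

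The crux of the proof is that the $L(-1)$-summand vanishes under the standing quasi-primary hypothesis $L(1)U(1) = 0$. Indeed, by \meqref{2.7}, $\langle L(-1) h, u \rangle = \langle h, L(1) u \rangle$ for every $h \in U(0)^\ast$ and $u \in U(1)$, and quasi-primality forces this pairing to be zero, so $L(-1): U(0)^\ast \to U(1)^\ast$ is identically the zero map. Consequently $f(0) T_r(g) = -T_r(g)_0 f = -\rho^\ast(T_R(g)) f$, and the restricted identity collapses to
\[
[T_R(f), T_R(g)] = T_R(\rho^\ast(T_R(f)) g) - T_R(\rho^\ast(T_R(g)) f),
\]
which is the classical $\mathcal{O}$-operator identity for $T_R$ on the $U(1)$-module $U(1)^\ast$. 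The only delicate step is isolating the $L(-1)$-contribution and invoking quasi-primality to kill it; the rest is routine bookkeeping on level-$1$ components.
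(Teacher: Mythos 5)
Your proof is correct and follows essentially the same route as the paper: restrict the $0$-relative RBO identity \eqref{3.8} to $f,g\in U(1)^\ast$, expand $f(0)a$ through the skewsymmetry formula, and use quasi-primarity of $U(1)$ together with \eqref{2.7} to kill the $L(-1)\bigl(a_1f\bigr)$ correction term. Your signs are in fact the correct ones: the paper's own proof of this lemma writes $(-1)^j$ where the expansion of $(-z)^{-j-1}$ produces $(-1)^{j+1}$, and hence lands on the symmetric identity $[T_R(f),T_R(g)]=T_R(\rho^\ast(T_R(f))g)+T_R(\rho^\ast(T_R(g))f)$, whereas your antisymmetric form $[T_R(f),T_R(g)]=T_R(\rho^\ast(T_R(f))g)-T_R(\rho^\ast(T_R(g))f)$ is the genuine $\mathcal{O}$-operator identity and agrees with the paper's Lemma~\ref{lm4.3} (where $v^\ast(0)a=-\rho^\ast(a)v^\ast$) and with the computation in Proposition~\ref{prop4.5}.
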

\begin{proof}
	Let $f,g\in U(1)^\ast$, and $a,b\in U(1)$. Since $L(-1)a_1 f\in U(1)^\ast$, and $U(1)$ consists of quasi-primary vectors, by \meqref{2.7} we have $\<L(-1)(a_1 f),b\>=\<a_1 f, L(1)b\>=0.$ Moreover, since $a_j f\in U(1-j)^\ast=0$ for $j\geq 2$, by \meqref{3.7} and \meqref{3.29} we derive
	\begin{align*}
	\<f(0)a,b\>&=\Res_{z} \<e^{zL(-1)}Y_{U'}(a,-z)f,b\>=\sum_{j\geq 0} \<(-1)^j\frac{L(-1)^j}{j!} (a_j f),b\>\\
	&=\<a_0 f,b\>-\<L(-1)(a_1 f),b\>
=\<\rho^\ast(a)(f),b\>.
	\end{align*}
	Hence $f(0)a=\rho^\ast(a)(f)$. 	If $T_r: U'\ra U$ is a $0$-relative RBO, then by \meqref{3.28b} and the definition formulas \meqref{3.8} and \meqref{4.60}, we have
	\begin{align*}
	0&=T_r(f)_0 T_r(g)-T_r(T_r(f)_0 g)-T_r(f(0)T_r(g))\\
	 &=[T_R(f),T_R(g)]-T_R(\rho^\ast(T_R(f))(g))-T_R(\rho^\ast(T_R(g))(f)),
	\end{align*}
	for all $f,g\in U(1)$. Thus $T_R:U(1)^\ast\ra U(1)$ is a relative RBO.
	\end{proof}
With the notations in this subsection, by Lemma \mref{lm4.1}, Lemma \mref{lm4.2} and Theorem~\mref{thm:main1}, we have the following diagram. 
\vspace{-.1cm}
\begin{equation}\label{eq:diag1}
\begin{tikzcd}
\text{s.-s. } r\text{ is solution to 0-VOYBE in } U \arrow[d,Leftrightarrow,"\text{Thm. \ref{thm:main1}}"'] \arrow[r, "\text{Lem.~\mref{lm4.1}}"] &  \text{s.-s. } R \text{ is solution to the CYBE in } U(1) \arrow [d,Leftrightarrow,"\text{\mcite{K1}}"]\\
T_r \text{ is  a 0-relative RBO of } U  \arrow[r, "\text{Lem.~\mref{lm4.2}}"]& T_R \text{ is a relative RBO of } U(1)
\end{tikzcd}
\end{equation}
where ``s.-s.'' in this diagram is the abbreviation of ``skewsymmetric''.
In particular, the classical result about constructing a relative RBO from a solution of the CYBE~\mcite{K1} can be viewed as a corollary of Theorem~\mref{thm:main1}.

\subsection{Solving the CYBE from relative RBOs}
We can also recover the process of using relative RBOs to produce solutions of the CYBE in the semidirect product Lie algebras~\mcite{B2} by restricting the corresponding process for the VOYBE to the first-levels.

In this subsection, we let $(V,Y,\vac,\om)$ be a VOA, and $(W,Y_W)$ be a $V$-module of conformal weight $0$. Let $U=V\rtimes W'$ as in Section \mref{ss:3.1}. Let $T\in \Hom_{\mathrm{LP}}(W,V)$. For $r=T-\sigma(T)=\sum_{t=0}^\infty r^t= \sum_{t=0}^{\infty}\sum_{i=1}^{p_t} T(v_i^t)\otimes (v_i^t)^\ast- (v_i^t)^\ast\otimes T(v_i^t)\in U^{\mhat{\o}2}$ in \meqref{3.13}, we let
\begin{equation}\mlabel{3.63}
\mathcal{R}:=r^1=\sum_{i=1}^{p_{1}} T(v_i^1)\otimes (v_i^1)^\ast- (v_i^1)^\ast\otimes T(v_i^1)\in (V_1\op W(1)^\ast)\otimes (V_1\op W(1)^\ast)
\end{equation}
be the homogeneous part of $r$ in $U(1)\o U(1)$, where $\{v^1_{1},\dots , v^1_{p_1}\}$ is a basis of $W(1)$ while $\{(v^1_1)^\ast,\dots, (v^1_{p_1})^\ast\}$ the dual basis of $W(1)^\ast$.

\begin{lm}\mlabel{lm4.3}
	Assume that $V_1$, $W(1)$ and $W(1)^\ast$ are spanned by quasi-primary vectors. Then the operations $\cdot_{0},\cdot'_0$, and $\cdot'^{\mathrm{op}}_0$ in Definition \mref{df2.3} satisfy
\begin{align*}
&a_0b=[a,b], && a_0'b=-[a,b],&& a_0'^{\mathrm{op}} b=[a,b];\\
&a_0v^\ast=\rho^\ast(a) v^\ast,&& a_0'v^\ast=-\rho^\ast(a)v^\ast,&& a'^{\mathrm{op}}_0 v^\ast=\rho^\ast(a)v^\ast;\\
&v^\ast(0) a=-\rho^\ast(a)v^\ast,&& (v^\ast)'(0)a=\rho^\ast(a)v^\ast,&& (v^\ast)'^{\mathrm{op}}(0)a=-\rho^\ast(a)v^\ast,
\end{align*}
for $a,b\in V_1$ and $v^\ast\in W(1)^\ast$, where $\rho^\ast$ is given by \meqref{3.29}.
\end{lm}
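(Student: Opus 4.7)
The plan is to verify each of the nine identities by direct residue-extraction from the defining formulas. The three columns correspond to the three ``$0$-mode'' operations of Definition \mref{df2.3}: the usual mode $a_0$ coming from $Y_U$, the primed mode $a'_0$ from $Y_U'$ in \meqref{3.51}, and the opposite primed mode $a'^{\mathrm{op}}_0$ from $Y_U'^{\mathrm{op}}$ in \meqref{3.52}. Under the quasi-primary hypothesis, the exponentials $e^{\pm z L(1)}$ collapse to the identity on all arguments, and $(-z^{-2})^{L(0)}$ reduces to multiplication by $-z^{-2}$ since every argument has weight~$1$. Hence whenever $\alpha$ and $\beta$ are drawn from $V_1$ or $W(1)^\ast$, one gets
$$Y_U'(\alpha, z)\beta = -z^{-2}\, Y_U(\alpha, z^{-1})\beta \qquad\text{and}\qquad Y_U'^{\mathrm{op}}(\alpha, z)\beta = -z^{-2}\, Y_U(\alpha, -z^{-1})\beta.$$

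For the first two rows (where $a \in V_1$ sits in the left slot), the identity $Y_U|_{V \otimes V} = Y_V$ from \meqref{3.28} together with $Y_U(a,z) v^\ast = Y_{W'}(a,z)v^\ast$ reduce everything to the usual $V$- and $W'$-module actions. Extracting the coefficient of $z^{-1}$ from the collapsed expressions above and using the weight bookkeeping (the mode $a_n$ shifts weights by $-n$ on $V_1$ and on $W(1)^\ast$, so only $n=0$ contributes) yields $a'_0 b = -a_0 b = -[a,b]$, $a'^{\mathrm{op}}_0 b = a_0 b = [a,b]$, $a'_0 v^\ast = -\rho^\ast(a) v^\ast$, and $a'^{\mathrm{op}}_0 v^\ast = \rho^\ast(a) v^\ast$, together with the standard identities $a_0 b = [a,b]$ and $a_0 v^\ast = \rho^\ast(a) v^\ast$ from the definition of the Lie bracket on $V_1$ and from \meqref{3.29}.

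The third row is slightly more subtle because $Y_U(v^\ast, z)a = Y_{W'V}^{W'}(v^\ast, z) a = e^{zL(-1)} Y_{W'}(a,-z) v^\ast$ by \meqref{3.44'}, which introduces an $L(-1)$-exponential that does \emph{not} trivialize. Expanding and taking the coefficient of $z^{-1}$ yields $v^\ast(0) a = \sum_{n\ge 0} \frac{(-1)^{n+1}}{n!} L(-1)^n(a_n v^\ast)$. The weight grading of $W'$ gives $a_n v^\ast \in W(1-n)^\ast$, which vanishes for $n \ge 2$; the $n=0$ term gives $-\rho^\ast(a) v^\ast$; and the $n=1$ term involves $L(-1)(a_1 v^\ast)$ where $a_1 v^\ast \in W(0)^\ast$. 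This $L(-1)$-image vanishes because, for every $u \in W(1)$, the pairing $\langle L(-1)(a_1 v^\ast),u\rangle = \langle a_1 v^\ast, L(1) u\rangle$ is zero by the quasi-primary hypothesis on $W(1)$. Hence $v^\ast(0) a = -\rho^\ast(a) v^\ast$, and the same calculation (with the $\pm z^{-1}$ argument of $Y_U$ that appears in \meqref{3.51} and \meqref{3.52}, and the weight-$1$ factor $-z^{-2}$ from $(-z^{-2})^{L(0)} v^\ast$) handles $(v^\ast)'(0)a$ and $(v^\ast)'^{\mathrm{op}}(0) a$, producing the remaining two signs.

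The main obstacle is keeping the sign conventions straight between the three dual-type vertex operators and between the $V$-module and $W'$-module viewpoints; nothing deep is hidden once the quasi-primary assumption is in hand. The lemma is, in effect, a bookkeeping statement that the infinite series defining $Y_U'$, $Y_U'^{\mathrm{op}}$, and $Y_{W'V}^{W'}$ all collapse to a single surviving residue on the weight-$1$ subspaces, so that the three dot products at $m=0$ specialize to signed copies of the Lie-algebraic bracket on $V_1$ or of the coadjoint action $\rho^\ast$ on $W(1)^\ast$.
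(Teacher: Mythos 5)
Your proof is correct and follows essentially the same route as the paper's: collapse $e^{\pm zL(1)}$ and $(-z^{-2})^{L(0)}$ using quasi-primarity and the weight-$1$ hypothesis, extract the residue, and use the pairing $\<L(-1)f,u\>=\<f,L(1)u\>$ together with $L(1)W(1)=0$ to dispose of the $L(-1)$-contribution in the third row. The only cosmetic difference is that the paper handles the third row by pairing the whole expression against $u\in W(1)$ and transposing $e^{zL(-1)}$ across the pairing at once, whereas you expand $e^{zL(-1)}$ termwise and kill the single surviving cross-term $L(-1)(a_1v^\ast)$; both reduce to the same two facts.
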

\begin{proof}
Since $\wt (a)=\wt (b)=1$, and $L(1)a=L(1)b=L(1)v^\ast=0$, the equations on first two rows follow immediately from \meqref{3.17}, \meqref{3.18} and \meqref{3.29}. Let $u\in W(1)$. By assumption we have $L(1)u=0$, and $\wt (u)=\wt (v^\ast)=1$. Then
	\begin{align*}
\<v^\ast(0)a,u\>&=\Res_z \<Y_{W'V}^{W'}(v^\ast, z)a,u\>=\Res_z\<Y_{W'}(a,-z)v^\ast,e^{zL(1)}u\>
=\<-a_0v^\ast, u\>=\<-\rho^\ast(a)v^\ast, u\>,\\
\<(v^\ast)'(0)a, u\>&=\Res_z \<Y_{W'V}^{W'}(e^{zL(1)}(-z^{-2})v^\ast,z^{-1})a,u\>=\Res_z(-1)z^{-2}\<Y_{W'}(a,-z^{-1})v^\ast,e^{z^{-1}L(1)}u\>\\
&=\Res_z (-1) z^{-2} \<\sum_{n\in \Z}a_nv^\ast (-1)^{n+1}z^{n+1},u \>=\<\rho^\ast(a)v^\ast,u\>,\\
\<(v^\ast)'^{\mathrm{op}}(0)a,u\>&=\Res_z \<Y_{W'V}^{W'}(e^{-zL(1)}(-z^{-2})^{L(0)}v^\ast, -z^{-1})e^{zL(1)}a,u\>\\
&=\Res_z (-1)z^{-2}\<Y_{W'}(a,z^{-1})v^\ast,e^{z^{-1}L(1)}u\>\\
&=\Res_z(-1)z^{-2}\<\sum_{n\in \Z} a_n v^\ast z^{n+1},u \>=\<-\rho^\ast(a)v^\ast, u\>.
\end{align*}
Since $v^\ast(0) a, (v^\ast)'(0)a$ and $(v^\ast)'^{\mathrm{op}}(0)a$ are contained in $W(1)^\ast$, we arrive at the conclusion.
\end{proof}

Recall (cf. \mcite{B2}) that $V_1\op W(1)^\ast$ carries a semidirect product Lie algebra structure:
\begin{equation}\mlabel{3.64}
[a+u^\ast, b+v^\ast]=[a,b]+\rho^\ast(a) v^\ast-\rho^\ast(b) u^\ast,\quad  a,b\in V_1,\ u^\ast,v^\ast\in W(1)^\ast.
\end{equation}
\begin{prop}\mlabel{prop4.4}
	Assume that $V_1$, $W(1)$ and $W(1)^\ast$ are spanned by quasi-primary vectors.
	Let $r=T-\sigma(T) \in U^{\mhat{\o}2}$ be as in \meqref{3.13}, and let $\R=r^1$ be as in \meqref{3.63}. If $r$ is a skewsymmetric solution to the $0$-VOYBE in $V\rtimes W'$ $($see Definition \mref{df:VOYBE}$)$, then $\R \in (V_1\rtimes W(1)^\ast)^{\otimes 2}$ is a skewsymmetric solution to the classical Yang-Baxter equation in the Lie algebra $V_1\rtimes W(1)^\ast$
	\begin{equation}
	 [\R_{12},\R_{13}]+[\R_{12},\R_{23}]+[\R_{13},\R_{23}]=0.
	\end{equation}
\end{prop}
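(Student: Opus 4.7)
The plan is to reduce Proposition \mref{prop4.4} to a direct application of Lemma \mref{lm4.1}, with the VOA $U$ there specialized to the semidirect product $V\rtimes W'$. The crucial point is to identify, under the quasi-primary hypothesis, the first-level Lie algebra of $V\rtimes W'$ (with bracket $[a,b]=a_0b$) with the semidirect product Lie algebra $V_1\rtimes W(1)^\ast$ defined by \meqref{3.64}.

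First I would note that, since $W$ has conformal weight $0$, the admissible grading of $W'$ coincides with the $L(0)$-grading, so the level-$1$ subspace of $U=V\rtimes W'$ is $U(1)=V_1\oplus W(1)^\ast$. Next, using the defining formula \meqref{3.28} for $Y_{V\rtimes W'}$, for $a,b\in V_1$ and $u^\ast,v^\ast\in W(1)^\ast$ I would compute
\begin{equation*}
(a+u^\ast)_0(b+v^\ast)=a_0b+a_0v^\ast+u^\ast(0)b,
\end{equation*}
using that $(u^\ast)_0(v^\ast)$ vanishes because $W'$ acts trivially on $W'$ in the semidirect product. By Lemma \mref{lm4.3} each of the three summands has a concrete Lie-theoretic interpretation: $a_0b=[a,b]$, $a_0v^\ast=\rho^\ast(a)v^\ast$, and $u^\ast(0)b=-\rho^\ast(b)u^\ast$. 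Thus
\begin{equation*}
(a+u^\ast)_0(b+v^\ast)=[a,b]+\rho^\ast(a)v^\ast-\rho^\ast(b)u^\ast,
\end{equation*}
which is precisely the semidirect product bracket \meqref{3.64}. This exhibits a canonical Lie algebra isomorphism $U(1)\cong V_1\rtimes W(1)^\ast$.

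With this identification in hand, the statement is immediate: $\mathcal{R}=r^1$ is (by construction) the homogeneous component of $r$ in $U(1)\otimes U(1)$, and it is skewsymmetric since $r$ is. Applying Lemma \mref{lm4.1} to the VOA $U=V\rtimes W'$, the hypothesis that $r$ solves the $0$-VOYBE yields that $r^1$ is a skewsymmetric solution of the CYBE in the Lie algebra $U(1)$. Transporting this along the isomorphism $U(1)\cong V_1\rtimes W(1)^\ast$ gives the desired conclusion $[\mathcal{R}_{12},\mathcal{R}_{13}]+[\mathcal{R}_{12},\mathcal{R}_{23}]+[\mathcal{R}_{13},\mathcal{R}_{23}]=0$.

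The only step requiring care, and therefore the main (though mild) obstacle, is verifying that the three $0$-dot products $\cdot_0,\cdot'_0,\cdot'^{\mathrm{op}}_0$ arising from $Y_{V\rtimes W'}$ restrict at level $1$ to the operations needed in the semidirect product Lie bracket; this is exactly what the quasi-primary hypothesis guarantees via Lemma \mref{lm4.3}. Once this bracket identification is established, no new computation is required beyond invoking Lemma \mref{lm4.1}.
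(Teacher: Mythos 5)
Your proposal is correct, but it takes a genuinely different route from the paper. The paper proves Proposition \mref{prop4.4} by a direct computation: it introduces the projection $p_{1,1,1}:U^{\mhat{\o}3}\to U(1)^{\o 3}$ and evaluates $p_{1,1,1}(r_{12}\cdot_0 r_{13})$, $p_{1,1,1}(-r_{23}\cdot'_0 r_{12})$ and $p_{1,1,1}(r_{13}\cdot'^{\mathrm{op}}_0 r_{23})$ term by term using all nine identities of Lemma \mref{lm4.3}, showing they equal $-[\R_{12},\R_{13}]$, $-[\R_{12},\R_{23}]$ and $-[\R_{13},\R_{23}]$ respectively. You instead observe that the proposition is Lemma \mref{lm4.1} specialized to $U=V\rtimes W'$, once one identifies the first-level Lie algebra $(U(1),\,a_0b)$ with the semidirect product $V_1\rtimes W(1)^\ast$ of \meqref{3.64}; your bracket computation $(a+u^\ast)_0(b+v^\ast)=[a,b]+\rho^\ast(a)v^\ast-\rho^\ast(b)u^\ast$ is correct (the cross terms come from the first column of Lemma \mref{lm4.3}, and $(u^\ast)_0(v^\ast)=0$ since $W'\cdot W'=0$ in the semidirect product). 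This is a legitimate and more economical argument; what it buys is that the only new verification is the bracket identification, with the three-term cancellation delegated entirely to Lemma \mref{lm4.1}. Two points you should make explicit to close the loop: (a) $V\rtimes W'$ is an honest VOA here because the conformal weight of $W$ is $0$, so Lemma \mref{lm4.1} is applicable; and (b) the hypothesis of Lemma \mref{lm4.1} --- that $U(1)$ consists of quasi-primary vectors --- follows because $U(1)=V_1\oplus W(1)^\ast$ and both summands are spanned by quasi-primary vectors by assumption (the remaining hypothesis on $W(1)$ is what makes the identity $u^\ast(0)b=-\rho^\ast(b)u^\ast$ from Lemma \mref{lm4.3} valid, since it kills the $L(-1)(b_1u^\ast)$ correction term). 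With these two remarks added, your reduction is complete and arguably cleaner than the paper's computation.
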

\begin{proof}
	Define a projection map on $U^{\mhat{\o}3}$ by
\vspace{-.2cm}
	$$p_{1,1,1}: U\mhat{\o}U\mhat{\o}U\ra U(1)\otimes U(1)\otimes U(1), \ \ \  p_{1,1,1}\Big(\sum_{q,s,t=0}^\infty \sum_{i,j,k}\al_i^q\otimes \b_j^{s}\otimes \ga_k^{t}\Big)= \sum_{i,j,k}\al_i^{1}\otimes \b_j^{1}\otimes \ga_k^{1},$$
	where the sums over $i,j,k$ are finite, and $\al^q_i\in U(r)$, $\b^s_j\in U(s)$, and $\ga^t_k\in U(t)$, for $q,s,t\geq 0$, and $i,j,k\geq 1$. By \meqref{3.15}-\meqref{3.17}, the computations in the proof of Theorem \mref{thm:main2}, together with Lemma \mref{lm4.3}, we derive
{\small 
	\begin{align*}
&p_{1,1,1}(r_{12}\cdot_{0}r_{13})=r^1_{12}\cdot_0 r^1_{13}\\
	&= \sum_{i,k} \bigg(T(v^1_k)_0 T(v^1_i)\otimes (v^1_i)^\ast\otimes (v^1_k)^\ast- T(v^1_k)_0(v^1_i)^\ast \otimes T(v^1_i)\otimes (v^1_k)^\ast
	 -(v^1_k)^\ast(0)T(v^1_i)\otimes (v^1_i)^\ast\otimes T(v^1_k)\bigg)\\
	&=\sum_{i,k} \bigg(-[T(v^1_i), T(v^1_k)]\otimes (v^1_i)^\ast\otimes (v^1_k)^\ast- \rho^\ast (T(v^1_k))(v^1_i)^\ast\otimes T(v^1_i)\otimes (v^1_k)^\ast\\
	&\quad +\rho^\ast(T(v^1_i))(v^1_k)^\ast\otimes (v^1_i)^\ast\otimes T(v^1_k)\bigg)=-[\R_{12},\R_{13}],\\
&	p_{1,1,1}(-r_{23}\cdot'_0 r_{12})= -r^1_{23}\cdot'_0 r_{12}\\
&=\sum_{i,l} \bigg(-T(v^1_i)\o T(v^1_l)'_0 (v^1_i)^\ast\o (v^1_l)^\ast+ (v^1_i)^\ast\o T(v^1_l)'_0 T(v^1_i)\o (v^1_l)^\ast
	-(v^1_l)^\ast \o ((v^1_l)^\ast)'(0) T(v^1_i)\o T(v^1_l)	\bigg)\\
	&=\sum_{i,l} \bigg(T(v^1_i)\o \rho^\ast(T(v^1_l))(v^1_i)^\ast\o (v^1_l)^\ast+ (v^1_i)^\ast\o [T(v^1_i),T(v^1_l)]\o (v^1_l)^\ast\\
	&\quad -(v^1_i)^\ast \o \rho^\ast(T(v^1_i))(v^1_l)^\ast\o T(v^1_l)\bigg)=-[\R_{12},\R_{23}],\\
&	p_{1,1,1}(r_{13}\cdot'^{\mathrm{op}}_0 r_{23})=r^1_{13}\cdot'^{\mathrm{op}}_0 r_{23}\\
&=\sum_{k,l} \bigg(-T(v^1_k)\o (v^1_l)^\ast\o T(v^1_l)'^{\mathrm{op}}_0(v^1_k)^\ast-(v^1_k)^\ast\o T(v^1_l)\o ((v^1_l)^\ast)'^{\mathrm{op}}(0) T(v^1_k)\\
&\quad+ (v^1_k)^\ast\o (v^1_l)^\ast\o T(v^1_l)'^{\mathrm{op}}_0 T(v^1_k) \bigg)\\
&=\sum_{k,l}\bigg(-T(v^1_k)\o (v^1_l)^\ast\o \rho^\ast(T(v^1_l))(v^1_k)^\ast+ (v^1_k)^\ast\o T(v^1_l)\o \rho^\ast(T(v^1_k))(v^1_l)^\ast\\
&\quad - (v^1_k)^\ast\o (v^1_l)^\ast\o [T(v^1_k),T(v^1_l)] \bigg)=-[\R_{13},\R_{23}].
	\end{align*}
}
 Since the projection $p_{1,1,1}$ is clearly a linear map, we have
$$
	0=p_{1,1,1}(r_{12}\cdot_0 r_{13}-r_{23}\cdot_0 r_{12}+r_{13}\cdot_0 r_{23})=-[\R_{12},\R_{13}]-[\R_{12},\R_{23}]-[\R_{13},\R_{23}].
$$
	Hence $\R\in (V_1\rtimes W(1)^\ast)^{\otimes 2}$ is a solution to the classical Yang-Baxter equation.
\end{proof}
On the other hand, consider the restriction of the level-preserving linear map $T\in \Hom_{\mathrm{LP}}(W,V)$ to the first level. We write
\begin{equation}
\T:=T|_{W(1)}: W(1)\ra V_1.
\end{equation}

\begin{prop}\mlabel{prop4.5}
	Let $V_1$ and $W(1)$ be spanned by quasi-primary vectors, and let $T\in \Hom_{\mathrm{LP}}(W,V)$. If either one of the conditions \meqref{3.8}, \meqref{3.39}, or \meqref{3.40} with $m=0$ holds, then $\T:W(1)\ra V_1$ is a relative RBO. In particular, if $T$ is a $0$-strong relative RBO, then $\T$ is a relative RBO.
\end{prop}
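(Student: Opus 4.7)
The plan is to restrict each of the three conditions \meqref{3.8}, \meqref{3.39}, \meqref{3.40} at $m=0$ to first-level inputs $u,v\in W(1)$ and $f\in V_1^\ast$, and to verify directly that each reduces to the defining identity
$$[\T(u),\T(v)] \;=\; \T(\rho(\T(u))v) \;-\; \T(\rho(\T(v))u)$$
of a relative Rota-Baxter operator on the Lie algebra $V_1$ with module $W(1)$. The key auxiliary computation is the identity $u(0)a = -\rho(a)u$ for any $u\in W(1)$ and $a\in V_1$, both quasi-primary. I would derive this by unwinding $Y^{W}_{WV}(u,z)a = e^{zL(-1)}Y_W(a,-z)u$ and extracting $\Res_z$: the grading constraint $a_n u\in W(1-n)$ kills all terms with $n\geq 2$, leaving $u(0)a = -a_0 u + L(-1)(a_1 u)$ with $a_1 u\in W(0)$. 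Unwinding the quasi-primary hypothesis on $W(1)^\ast$ via $\<L(1)f,w\> = \<f,L(-1)w\>$ and nondegeneracy of the pairing $W(1)^\ast\times W(1)\to\C$ forces $L(-1)W(0)=0$, eliminating the correction term.

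The case \meqref{3.8} is then immediate: for $u,v\in W(1)$, Lemma~\mref{lm4.3} gives $T(u)_0 T(v) = [\T(u),\T(v)]$ and $T(T(u)_0 v) = \T(\rho(\T(u))v)$, while the auxiliary identity converts the remaining summand $T(u(0)T(v))$ into $-\T(\rho(\T(v))u)$, yielding the desired bracket formula.

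For \meqref{3.39} at $m=0$, I would restrict to $u\in W(1)$ and $f\in V_1^\ast$, so that the identity is an equation in $W(1)^\ast$, and then pair against an arbitrary $v\in W(1)$ using $\<T^\ast(\cdot),v\> = \<\cdot,T(v)\>$ throughout. The first two summands contribute $-\<f,\T(\rho(\T(u))v)\>$ and $\<f,[\T(u),\T(v)]\>$ after simplifying the contragredient actions as in Lemma~\mref{lm4.2}. For the intertwining-operator term, I would specialize \meqref{3.37} using $e^{zL(1)}u = u$ and $(-z^{-2})^{L(0)}u = -z^{-2}u$ to reduce $\Res_z\<\Y^{V'}_{WW'}(u,z)T^\ast(f),a\>$ to $-\<T^\ast(f),u(0)a\>$; the auxiliary identity then turns this into $\<f,\T(\rho(a)u)\>$. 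Choosing $a = \T(v)$ and invoking nondegeneracy of $V_1\times V_1^\ast\to\C$ recovers the relative RBO identity. The case \meqref{3.40} runs in parallel, with $\Y^{V'}_{W'W}$ in place of $\Y^{V'}_{WW'}$ via the skewsymmetry formula \meqref{3.38}.

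The main obstacle is controlling the potentially non-vanishing $L(-1)(a_1 u)$ term in the auxiliary identity; once the quasi-primary assumption on $W(1)^\ast$ is translated into $L(-1)W(0)=0$, everything else is careful bookkeeping analogous to the proof of Theorem~\mref{thm:main2} restricted to the bottom homogeneous subspace $U(1)^{\otimes 3}$, as was done in Proposition~\mref{prop4.4}. The ``in particular'' assertion is then immediate, since any $0$-strong relative RBO satisfies \meqref{3.8} at $m=0$ by definition.
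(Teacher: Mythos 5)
Your proposal follows essentially the same route as the paper: restrict \meqref{3.8}, \meqref{3.39} and \meqref{3.40} at $m=0$ to $u,v\in W(1)$ and $f\in V_1^\ast$, use quasi-primarity of $V_1$ and $W(1)$ to collapse $e^{zL(1)}$ and $(-z^{-2})^{L(0)}$, and reduce each condition to $[\T(u),\T(v)]=\T(\rho(\T(u))v)-\T(\rho(\T(v))u)$ via the key expansion $u(0)a=-\rho(a)u+L(-1)(a_1u)$. The one point of divergence is how the correction term $L(-1)(a_1u)$ is killed: you derive $L(-1)W(0)=0$ from quasi-primarity of $W(1)^\ast$ --- a hypothesis assumed in Lemma~\mref{lm4.3} and Proposition~\mref{prop4.4} but not literally stated in Proposition~\mref{prop4.5} --- whereas the paper writes $\<L(-1)\T(v)_1u,w\>=\<\T(v)_1u,L(1)w\>=0$ for $w\in W(1)$, which as written pairs two elements of $W$ and in fact needs the same dual-side hypothesis, since $L(1)W(1)=0$ alone does not force $L(-1)W(0)=0$ (elements of $L(-1)W(0)$ are automatically quasi-primary). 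So your treatment of that step is, if anything, the more careful one; the rest is the paper's argument.
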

\begin{proof}
Let $u,v\in W(1)$ and $f\in V_1^\ast$. Assume that $T$ satisfies \meqref{3.8}. Then we have
\begin{align*}
[\T(u),\T(v)]&=\T(u)_0 \T(v)=\T(\T(u)_0v)+\T(u(0)\T(v))\\
&=\T(\rho(\T(u))v)+ \T\Big(\sum_{j\geq 0} \frac{(-1)^{j+1}}{j!} L(-1)^j (\T(v))_j u\Big)\\
&=\T(\rho(\T(u))v)-\T(\rho(\T(v))u)+\T(L(-1)\T(v)_1 u).
\end{align*}
Note that $L(-1)\T(v)_1u\in W(1)$, and $\<L(-1)\T(v)_1u, w\>=\<\T(v)_1u,L(1)w\>=0$, for all $w\in W(1)$. Hence $L(-1)\T(v)_1u=0$. Then we have $\T(u(0)\T(v))=-\T(\rho(\T(v))u)$ and $[\T(u),\T(v)]=\T(\rho(\T(u))v)-\T(\rho(\T(v))u)$. Thus $\T$ is a relative RBO.

Assume $T$ satisfies \meqref{3.39}. Note that $\T^\ast=T^\ast|_{V^\ast_{1}}: V_1^\ast\ra W(1)^\ast$. Then by \meqref{3.37} and the definition of $Y_{VV'}^{V'}$ and $Y_{W'}$ in Section 5.4 in \mcite{FHL}, we obtain
\begin{align*}
0&=	 \Res_{z}\left\<Y_{W'}(T(u),z)T^\ast(f)-T^\ast(Y_{VV'}^{V'}(T(u),z)f)+T^\ast(\Y_{WW'}^{V'}(u,z)T^\ast(f)) , v\right\>\\
&=\Res_z\left<\T^\ast(f),  Y_{W}(e^{zL(1)}(-z^{-2})^{L(0)}\T(u), z^{-1})v \right\>-\Res_z \left\<f,Y_{V}(e^{zL(1)}(-z^{-2})^{L(0)}\T(u),z^{-1})\T(v) \right\>\\
&\quad +\Res_z\left\<\T^\ast(f), Y_{WV}^W(e^{zL(1)}(-z^{-2})^{L(0)}u,z^{-1})\T(v) \right\>\\
&=-\<f,\T\left(\T(u)_0v\right)\>+\<f,\T(u)_0\T(v)\>-\<f,\T\left(u(0)\T(v)\right)\>\\
&=\<f,[\T(u),\T(v)]-\T(\rho(\T(u))v)+\T(\rho(\T(v))u)\>.
\end{align*}
Hence $\T$ is a relative RBO. Finally, assume that $T$ satisfies \meqref{3.40}. Then
\begin{align*}
0&= \Res_{z} \left\<Y_{W'V}^{W'}(T^\ast(f),z)T(u)-T^\ast(Y_{V'V}^{V'}(f,z)T(u))+T^\ast(\Y_{W'W}^{V'}(T^\ast(f),z)u),v\right\>\\
&=\Res_z \left\<Y_{W'}(\T(u),-z)\T^\ast(f),v\right\>-\<Y_{V'}(\T(u),-z)f,\T(v)\>+\<\Y_{WW'}^{V'}(u,-z)\T^\ast(f),\T(v)\>\\
&=\<f,\T\left(\T(u)_0v\right)\>-\<f,\T(u)_0\T(v)\>+\<f,\T\left(u(0)\T(v)\right)\>,
\end{align*}
and so $\T$ is a relative RBO.
\end{proof}
The proof of Proposition \mref{prop4.5} immediately gives an easy way to construct $0$-strong relative RBOs like in Example \mref{ex2.10}.
\begin{coro}
	Let $V_1$ and $W(1)$ be spanned by quasi-primary vectors, and let $\phi: W(0)\ra V_0=\C\vac$ be an arbitrary linear map. Then a relative RBO $\T:W(1)\ra V_1$ of the Lie algebra $V_1$ can be extended to a $0$-strong relative RBO $T:W\ra V$ by letting
	$$T|_{W(0)}:=\phi,\quad T|_{W(1)}:=\T,\ \ \mathrm{and}\ \ T|_{W(n)}:=0,\quad n\geq 2.$$
	\end{coro}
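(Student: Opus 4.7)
The strategy is to verify the three defining conditions of a $0$-strong relative Rota-Baxter operator---equation \meqref{3.8} and the compatibility identities \meqref{3.39}, \meqref{3.40}, all at $m=0$---by a case analysis based on the $\N$-grading of the inputs from $W$, exploiting that the extension $T$ is supported on $W(0)\oplus W(1)$.

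For arguments in $W(n)$ with $n\geq 2$, the grading of the three $m$-dot operations (Remark~\mref{rk3.4}) together with $T|_{W(\geq 2)}=0$ collapses both sides of the axioms to $0$ immediately. For inputs from $W(0)$, one writes $T(u)=\phi(u)\vac$ and invokes the vacuum and creation properties $\vac_0=0$ and $a_n\vac=0$ for $n\geq 0$: the skew-symmetry formula $Y_{WV}^W(u,z)\vac=e^{zL(-1)}Y_W(\vac,-z)u=e^{zL(-1)}u$ yields $u(0)\vac=0$, while $T(u)_0 T(v)=\phi(u)\vac_0 T(v)$ and $T(T(u)_0 v)=T(\phi(u)\vac_0 v)$ both vanish, disposing of the corresponding cross-terms.

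The essential case is $u,v\in W(1)$, where \meqref{3.8} at $m=0$ reads
\[
\T(u)_0\T(v)=T(\T(u)_0 v)+T(u(0)\T(v)).
\]
The skew-symmetry formula, after expanding the exponential and using the $\N$-grading of $W$ to truncate the sum to at most two terms, gives $u(0)\T(v)=-\rho(\T(v))u+L(-1)(\T(v))_1 u$. The quasi-primary hypothesis $L(1)W(1)=0$ together with the adjointness $\<L(-1)f,w\>=\<f,L(1)w\>$ on pairings against $W(1)^\ast$ forces $L(-1)(\T(v))_1 u=0$, exactly as in the proof of Proposition~\mref{prop4.5}. The equation then collapses to the Lie-algebra relative Rota-Baxter identity $[\T(u),\T(v)]=\T(\rho(\T(u))v)-\T(\rho(\T(v))u)$, which holds by hypothesis. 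The compatibility identities \meqref{3.39} and \meqref{3.40} are handled by the same strategy: their $m=0$ residues reduce, via the grading together with the adjoint and skew-symmetry formulas for $Y_{W'}$, $Y_{VV'}^{V'}$, $\Y_{WW'}^{V'}$, and their partners, to the identical Lie-algebra identity on $V_1$ guaranteed by the hypothesis on $\T$.

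The main obstacle will be careful bookkeeping of the mixed cases pairing $W(0)$ with $W(1)$, where residues in the axioms may land in $W(0)$ and must be shown---through combined use of the vacuum/creation axioms and the intertwining property of $Y_{WV}^W$ (which in turn forces the relevant residues to sit in subspaces annihilated by $\phi$ or the grading)---to vanish. Once these mixed cases are disposed of, the entire verification reduces to the essential level-$1$ computation, which is precisely the proof of Proposition~\mref{prop4.5} read in reverse.
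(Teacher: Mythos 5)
The paper offers no written proof of this corollary---it merely asserts that the verification is contained in (the reversal of) the proof of Proposition \mref{prop4.5}---so your outline must stand on its own, and it has a genuine gap. A $0$-strong relative RBO must satisfy not only \meqref{3.8} but also the compatibility identities \meqref{3.39} and \meqref{3.40} at $m=0$, and your claim that inputs from $W(n)$ with $n\geq 2$ ``collapse both sides of the axioms to $0$ immediately'' is false for these two identities. Take $u\in W(2)$ and $f\in V_0^\ast$ in \meqref{3.39}: the first two terms vanish because $T(u)=0$, but the third term $T^\ast(\Y_{WW'}^{V'}(u,z)T^\ast(f))$ does not contain $T(u)$ at all; its residue lies in $V_1^\ast$, so applying $T^\ast$ lands it in $W(1)^\ast$, which is killed neither by the grading nor by the (arbitrary) map $\phi$. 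Unwinding \meqref{3.37} and \meqref{2.12}, its pairing with $w\in W(1)$ equals $\langle f,\phi(-a_2u+a_1L(1)u-\tfrac12 a_0L(1)^2u)\rangle$ with $a=\T(w)$, so one must prove $-a_2u+a_1L(1)u-\tfrac12a_0L(1)^2u=0$ in $W(0)$. This does hold, but only via the commutator formula $[L(1),a_n]=-na_{n+1}$ for quasi-primary $a\in V_1$ combined with $L(1)W(1)=0$, which give $a_1L(1)u=L(1)(a_1u)+a_2u=a_2u$ and $a_0L(1)^2u=L(1)(a_0L(1)u)=0$; nothing in your argument supplies this step. The mirror term $T^\ast(\Y_{W'W}^{V'}(T^\ast(f),z)u)$ in \meqref{3.40} needs the same treatment.

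The mixed cases are likewise resolved by the wrong mechanism in your sketch. For $u\in W(1)$ and $f\in V_0^\ast$ the surviving term of \meqref{3.39} pairs with $w\in W(0)$ to give $-\langle f,\phi(\T(u)_0w)\rangle$; here $\T(u)_0w$ sits squarely in $W(0)$ and $\phi$ is arbitrary, so neither ``the grading'' nor ``subspaces annihilated by $\phi$'' disposes of it. It vanishes only because $a_0w=L(1)(a_{-1}w)\in L(1)W(1)=0$ for quasi-primary $a\in V_1$ and $w\in W(0)$---again the quasi-primary hypothesis, not the vacuum or creation axioms. In short, the corollary is strictly more than ``Proposition \mref{prop4.5} read in reverse'': that proposition only ever invokes \meqref{3.39}--\meqref{3.40} for $u\in W(1)$ and $f\in V_1^\ast$, whereas here every bidegree with $\deg u+\deg f\leq 2$ must be checked, and each nontrivial one consumes the quasi-primary hypotheses in a specific way. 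One such case you should confront explicitly: for $u\in W(0)$ and $f\in V_1^\ast$ the surviving term reduces to $\langle f,\T(L(-1)u)\rangle$ times a value of $\phi$, so the argument needs $L(-1)W(0)=0$ (or a reason why $\T$ annihilates $L(-1)W(0)$), a point neither your outline nor the paper's one-line justification addresses.
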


Now apply Propositions \mref{prop4.4} and \mref{prop4.5} and assume that $V_1$, $W(1)$ and $W(1)^\ast$ are spanned by quasi-primary vectors. Then we have another diagram that illustrate the relationship between the $0$-VOYBE and $0$-strong relative RBO of VOAs on the one hand, and the CYBE and the relative RBO of Lie algebras on the other.
\begin{equation}\mlabel{eq:diag2}
\begin{tikzcd}
T\text{\ is\ a\ 0-strong\ relative\ RBO\ of VOA} \arrow[d,Leftrightarrow,"\text{Thm.\ \mref{thm:main2}} "'] \arrow[r, "\text{Prop.~\mref{prop4.5}}"] & \T\text{\ is a relative RBO of Lie algebra} \arrow [d,Leftrightarrow,"\text{\mcite{B2}}"]\\
r=T-T^{21}\text{\ is \ a\ solution\ to\ 0-VOYBE\ } \arrow[r, "\text{Prop.~\mref{prop4.4}}"]& \R=\T-\T^{21} \text{\ is\ a\ solution\ to\ CYBE}
\end{tikzcd}
\end{equation}

\noindent
{\bf Acknowledgments.} This research is supported by
NSFC (11931009, 12271265, 12261131498), the Fundamental Research Funds for the Central Universities and Nankai Zhide Foundation.

\noindent
{\bf Declaration of interests. } The authors have no conflicts of interest to disclose.

\noindent
{\bf Data availability. } No new data were created or analyzed in this study.

\vspace{-.1cm}

\end{document}